\documentclass[12pt,a4paper]{amsart}
\usepackage{amssymb,eucal}
\usepackage{tikz-cd}

\textwidth=36pc
\calclayout

\pagestyle{plain}
\raggedbottom

\emergencystretch=2em

\newcommand{\+}{\nobreakdash-}
\renewcommand{\:}{\colon}

\newcommand{\rarrow}{\longrightarrow}
\newcommand{\ot}{\otimes}

\newcommand{\bu}{{\text{\smaller\smaller$\scriptstyle\bullet$}}}
\newcommand{\lrarrow}{\mskip.5\thinmuskip\relbar\joinrel\relbar\joinrel
 \rightarrow\mskip.5\thinmuskip\relax}

\DeclareMathOperator{\Hom}{Hom}
\DeclareMathOperator{\Ext}{Ext}
\DeclareMathOperator{\Tor}{Tor}
\DeclareMathOperator{\Ker}{Ker}
\DeclareMathOperator{\coker}{coker}
\DeclareMathOperator{\pd}{pd}
\DeclareMathOperator{\fd}{fd}
\DeclareMathOperator{\Id}{Id}
\DeclareMathOperator{\id}{id}

\newcommand{\bb}{{\mathsf{b}}}

\newcommand{\modl}{{\operatorname{\mathsf{--mod}}}}
\newcommand{\modr}{{\operatorname{\mathsf{mod--}}}}
\newcommand{\bimod}{{\operatorname{\mathsf{--mod--}}}}

\newcommand{\co}{{\operatorname{\mathsf{-co}}}}
\newcommand{\ctra}{{\operatorname{\mathsf{-ctra}}}}
\newcommand{\adj}{{\operatorname{\mathsf{-adj}}}}

\newcommand{\R}{\mathfrak R}
\newcommand{\C}{\mathcal C}

\newcommand{\Ab}{\mathsf{Ab}}

\newcommand{\sA}{\mathsf A}
\newcommand{\sB}{\mathsf B}
\newcommand{\sC}{\mathsf C}
\newcommand{\sD}{\mathsf D}
\newcommand{\sG}{\mathsf G}
\newcommand{\sK}{\mathsf K}

\newcommand{\boZ}{\mathbb Z}
\newcommand{\boQ}{\mathbb Q}
\newcommand{\boL}{\mathbb L}
\newcommand{\boR}{\mathbb R}
\newcommand{\boT}{\mathbb T}
\newcommand{\boA}{\mathbb A}
\newcommand{\boP}{\mathbb P}
\newcommand{\boX}{\mathbb X}
\newcommand{\boY}{\mathbb Y}

\newcommand{\Section}[1]{\bigskip\section{#1}\medskip}
\setcounter{tocdepth}{1}

\theoremstyle{plain}
\newtheorem{thm}{Theorem}[section]

\newtheorem{lem}[thm]{Lemma}
\newtheorem{prop}[thm]{Proposition}
\newtheorem{cor}[thm]{Corollary}
\theoremstyle{definition}
\newtheorem{ex}[thm]{Example}

\newtheorem{rem}[thm]{Remark}
\newtheorem{rems}[thm]{Remarks}

\begin{document}

\title{Matlis category equivalences \\ for a ring epimorphism}

\author[S.~Bazzoni]{Silvana Bazzoni}

\address[Silvana Bazzoni]{%
Dipartimento di Matematica ``Tullio Levi-Civita'' \\
Universit\`a di Padova \\
Via Trieste 63, 35121 Padova (Italy)}

\email{bazzoni@math.unipd.it}

\author[L.~Positselski]{Leonid Positselski}

\address[Leonid Positselski]{%
Institute of Mathematics of the Czech Academy of Sciences \\
\v Zitn\'a~25, 115~67 Praha~1 (Czech Republic); and
\newline\indent
Laboratory of Algebra and Number Theory \\
Institute for Information Transmission Problems \\
Moscow 127051 (Russia)}

\email{positselski@yandex.ru}

\keywords{Associative rings and modules, commutative rings,
ring epimorphisms, torsion modules, divisible modules, comodules,
contramodules, Harrison--Matlis category equivalences,
derived categories, triangulated recollement, Kronecker quiver}

\begin{abstract}
 Under mild assumptions, we construct the two Matlis additive category
equivalences for an associative ring epimorphism $u\:R\rarrow U$.
 Assuming that the ring epimorphism is homological of flat/projective
dimension~$1$, we discuss the abelian categories of $u$\+comodules and
$u$\+contramodules and construct the recollement of unbounded derived categories of $R$\+modules, $U$\+modules, and complexes of $R$\+modules
with $u$\+co/contramodule cohomology.
 Further assumptions allow to describe the third category in
the recollement as the unbounded derived category of the abelian
categories of $u$\+comodules and $u$\+contramodules.
 For commutative rings, we also prove that any homological epimorphism
of projective dimension~$1$ is flat.
 Injectivity of the map~$u$ is not required.
\end{abstract}

\maketitle

\tableofcontents

\section*{Introduction}
\medskip

 The aim of this paper is to develop the basics of the theory of
comodules and contramodules for an associative ring epimorphism in
the maximal natural generality, and for the purpose of future reference.
 Let us start this introduction with explaining what the words in
the paper's title mean.

 A \emph{ring epimorphism} $u\:R\rarrow U$ is a homomorphism of
associative rings such that for every pair of parallel ring
homomorphisms $f$, $g\:U\rightrightarrows V$ the equation $fu=gu$
implies $f=g$.
 Equivalently, a ring homomorphism~$u$ is an epimorphism if and only if
the two induced maps $u\ot\id$ and ${\id}\ot u\:U\rightrightarrows
U\ot_R U$ are equal to each other, if and only if one or both of
the maps $u\ot\id$ and ${\id}\ot u$ are isomorphisms, and if and only if
the multiplication map $U\ot_RU\rarrow U$ is an isomorphism.
 Further equivalent conditions for a ring map~$u$ to be an epimorphism
are that the functor of restriction of scalars $u_*\:U\modl\rarrow R\modl$
is fully faithful, or that the functor $u_*\:\modr U\rarrow\modr R$ is
fully faithful~\cite[Section~XI.1]{St}.
 In a ring epimorphism $R\rarrow U$, the ring $U$ is commutative whenever
the ring $R$~is.

 The history of what is known as Matlis category equivalences goes back
to the paper of Harrison~\cite{Harr}, where two equivalences between
certain full additive subcategories of the category of abelian groups
were constructed.
 The first equivalence was provided by the functor of tensor product
with the abelian group $\boQ/\boZ$, with the inverse functor
$\Hom_\boZ(\boQ/\boZ,{-})$.
 The second equivalence was given by the pair of functors
$\Tor_1^\boZ(\boQ/\boZ,{-})$ and $\Ext^1_\boZ(\boQ/\boZ,{-})$.

 In Matlis' memoir~\cite[Section~3]{Mat}, the setting was generalized
as follows.
 Let $R$ be a commutative domain, $Q$ be its field of quotients, and
$K=Q/R$ be the quotient $R$\+module.
 Then there are two equivalences between certain full additive
subcategories of the category of $R$\+modules.
 The first equivalence is provided by the functor of tensor product with
the $R$\+module $K$, and the inverse functor is $\Hom_R(K,{-})$.
 The second equivalence is given by the pair of functors
$\Tor_1^R(K,{-})$ and $\Ext^1_R(K,{-})$, which are mutually inverse in
restriction to the respective subcategories.
 Moreover, in the book~\cite{Mat2} Matlis extended the first one of his
two category equivalences to the setting with an arbitrary commutative
ring~$R$ and its total ring of quotients~$Q$.

 Let us mention two further generalizations of the Matlis category
equivalences in two different directions, which appeared in the two recent
papers~\cite{PMat,FN}.
 In the paper~\cite[Section~5]{PMat}, the two Matlis additive category
equivalences were constructed for a localization $S^{-1}R$ of a commutative
ring $R$ with respect to a multiplicative subset $S\subset R$.
 Injectivity of the map $R\rarrow S^{-1}R$ was not assumed, but
the assumption that the projective dimension of the $R$\+module $S^{-1}R$
does not exceed~$1$ was made.
 In the paper~\cite[Section~4]{FN}, the first Matlis category equivalence
was constructed for certain injective epimorphisms of noncommutative
rings $R\rarrow Q$, where $Q$ is the localization of $R$ with respect to
a one-sided Ore subset of regular elements.

 In this paper, we construct the first Matlis additive category equivalence
for any ring epimorphism $u\:R\rarrow U$ such that $\Tor_1^R(U,U)=0$, and
the second Matlis category equivalence for any~$u$ such that
$\Tor_1^R(U,U)=0=\Tor_2^R(U,U)$.
 Let us emphasize that \emph{neither} injectivity of~$u$, \emph{nor} any
condition on the projective or flat dimension of the $R$\+module $U$ is
required for these results.
 Commutativity of the rings $R$ and $U$ is not assumed, either.

 Furthermore, assuming that $U$ has projective dimension at most~$1$ as
a left $R$\+module and flat dimension at most~$1$ as a right $R$\+module,
we construct what was called the \emph{triangulated Matlis equivalence}
in~\cite{PMat}.
 However, unlike in~\cite{PMat}, we do not deduce the Matlis equivalences
between additive categories of modules from the triangulated equivalence,
but prove them separately.
 This allows to obtain the extra generality mentioned above.

 The key role is played by the full subcategories of what we call
\emph{$u$\+comodules} and \emph{$u$\+contramodules} in $R\modl$.
 The former is defined as the full subcategory of all left $R$\+modules
annihilated by the derived functor $\Tor^R_{0,1}(U,{-})$, while
the latter is the Geigle--Lenzing right $\Ext_R^{0,1}$\+perpendicular
subcategory to~$U$ in the category of left $R$\+modules.
 Under the assumptions of the flat/projective dimension of $U$ not
exceeding~$1$, these are abelian categories with exact inclusion
functors into $R\modl$.
 With the respective assumptions, we show that the $u$\+comodules form
a Grothendieck abelian category, while the abelian category of
$u$\+contramodules is locally presentable with a projective generator.
 We also discuss adjoint functors to the identity inclusions of
these full subcategories into the category of left $R$\+modules.

 The triangulated Matlis equivalence is an equivalence between
the (bounded or unbounded) derived category of complexes of $R$\+modules
with $u$\+comodule cohomology modules and the similar derived category
of complexes of $R$\+modules with $u$\+contramodule cohomology modules.
 The \emph{recollement} of triangulated Matlis equivalence identifies
both these triangulated categories with the Verdier quotient category
of the derived category $\sD^\star(R\modl)$ by the image of the fully
faithful functor of restriction of scalars $u_*\:\sD^\star(U\modl)
\rarrow\sD^\star(R\modl)$ for a homological ring epimorphism~$u$,
\begin{equation} \label{general-triangulated-matlis}
 \sD_{u\co}^\star(R\modl)\cong\sD^\star(R\modl)/u_*\sD^\star(U\modl)
 \cong\sD_{u\ctra}^\star(R\modl).
\end{equation}

 Under certain additional assumptions (which hold whenever, but not only
when, $u$~is injective) the exact embedding functors of the full
subcategories of $u$\+comodules and $u$\+contramodules, $R\modl_{u\co}
\rarrow R\modl$ and $R\modl_{u\ctra}\rarrow R\modl$, induce fully
faithful functors between the derived categories, identifying
the leftmost and the rightmost categories
in~\eqref{general-triangulated-matlis} with the derived categories of
the abelian categories $R\modl_{u\co}$ and $R\modl_{u\ctra}$.
 Hence one obtains an equivalence between the two derived categories,
\begin{equation} \label{particular-triangulated-matlis}
 \sD^\star(R\modl_{u\co})\cong\sD^\star(R\modl_{u\ctra}).
\end{equation}

 We should mention that, with the same assumptions as ours,
the equivalence of derived
categories~\eqref{particular-triangulated-matlis} was
obtained in~\cite[Corollary~4.4]{CX} as a particular case of a general
result about derived decomposition of abelian categories.
 The general approach in~\cite{CX} is based on the technique of complete
Ext-orthogonal pairs in abelian categories, which was introduced
by Krause and \v St'ov\'\i\v cek in~\cite{KS} (see also~\cite{BS}).
 The same argument as in the present paper, going back to~\cite{Pmgm}
and~\cite{PMat}, is used in~\cite{CX} in order to prove that
the triangulated functors induced by the embeddings of abelian
subcategories are fully faithful.
 One difference between our approaches is that in the present paper we
also obtain the equivalences~\eqref{general-triangulated-matlis}
holding under weaker assumptions.

 One of the main results of this paper is based on some recent results
of Hrbek and Angeleri H\"ugel--Hrbek~\cite{Hrb,AH}.
 We show that whenever $u\:R\rarrow U$ is a homological epimorphism of
commutative rings and $U$ is an $R$\+module of projective dimension~$1$,
it follows that $U$ is a flat $R$\+module.
 Generalizing Matlis' classical result, we also show that, under a mild
assumption on an epimorphism of commutative rings $u\:R\rarrow U$,
the ring $\R$ of endomorphisms of the complex $R\rarrow U$ in
the derived category of $R$\+modules is commutative.
 Under certain assumptions, it follows that the ring of endomorphisms
of the $R$\+module $U/R=\coker u$ is commutative, too.

 In the last section, we compute the full subcategories of $u$\+comodules
and $u$\+contramodules for certain ring epimorphisms originating
from the finite-dimensional noncommutative algebra $R$ associated with
the Kronecker quiver.
 We are grateful to Jan \v St\!'ov\'\i\v cek for the suggestion to
consider this example.

\medskip
\textbf{Acknowledgment.}
 The first-named author was partially supported by MIUR-PRIN
(Categories, Algebras: Ring-Theoretical and Homological
Approaches-CARTHA) and DOR1828909 of Padova University.
 The second-named author is supported by the GA\v CR project 20-13778S
and research plan RVO:~67985840. {\hbadness=1700\par}

\Section{First Additive Category Equivalence}
\label{first-additive-equivalence}

 Let $u\:R\rarrow U$ be an epimorphism of associative rings (i.~e.,
a ring homomorphism such that the multiplication map $U\ot_RU\rarrow U$
is an isomorphism of $R$\+$R$\+bimodules).
 Then one has $U\ot_RD\cong D\cong\Hom_R(U,D)$ for all left
$U$\+modules $D$, and the functor of restriction of scalars
$u_*\:U\modl\rarrow R\modl$ is fully faithful.
 The similar assertions hold for the right modules.
 We will say that a certain $R$\+module ``is a $U$\+module'' if it
belongs to the image of the functor of restriction of scalars.

 Let us introduce notation for the functors of extension and coextension
of scalars.
 The functor $u^*\:R\modl\rarrow U\modl$ left adjoint to~$u_*$
takes a left $R$\+module $M$ to the left $U$\+module $u^*(M)=U\ot_RM$.
 The functor $u^!\:R\modl\rarrow U\modl$ right adjoint to~$u_*$
takes a left $R$\+module $C$ to the left $U$\+module $u^!(C)=
\Hom_R(U,C)$.
 The natural isomorphisms of $U$\+modules mentioned in the previous
paragraph mean that the adjunction counit $u^*u_*\rarrow\Id$ and
the adjunction unit $\Id\rarrow u^!u_*$ are isomorphisms of endofunctors
$U\modl\rarrow U\modl$.

 We will use the simple notation $U/R$ for the cokernel of the map
$u\:R\rarrow U$.
 So $U/R$ is an $R$\+$R$\+bimodule.

 A left $R$\+module $M$ is called a \emph{$u$\+comodule} (or
a \emph{left $u$\+comodule}) if
$$
 U\otimes_RM=0=\Tor^R_1(U,M).
$$
 Similarly, a right $R$\+module $N$ is said to be a $u$\+comodule
(or a right $u$\+comodule) if $N\ot_RU=0=\Tor^R_1(N,U)$.

 A left $R$\+module $C$ is called a \emph{$u$\+contramodule} (or
a \emph{left $u$\+contramodule}) if
$$
 \Hom_R(U,C)=0=\Ext^1_R(U,C).
$$

 By~\cite[Proposition~1.1]{GL}, the class of all left $u$\+comodules
is closed under direct sums, cokernels of morphisms, and
extensions in $R\modl$.
 The class of all left $u$\+contramodules is closed under products,
kernels of morphisms, and extensions.

\begin{ex}
 The following example explains the ``comodules and contramodules''
terminology.
 Let $R=k[x]$ be the ring of polynomials in one variable over a field~$k$,
let $U=k[x,x^{-1}]$ be ring of Laurent polynomials, and let $u\:R\rarrow U$
be the natural inclusion.
 So one obtains the ring $U$ from $R$ by inverting the single element~$x$.

 Let $\C$ be the coalgebra over~$k$ such that the dual topological
algebra $\C^*$ is identified with the ring of formal power series $k[[x]]$.
 Then the full subcategory of $u$\+comodules in $R\modl$ is equivalent
to the category of comodules over the coalgebra $\C$, while the full
subcategory of $u$\+contramodules in $R\modl$ is equivalent to
the category of $\C$\+contramodules~\cite[Sections~1.3 and~1.6]{Prev}.
\end{ex}

 We will use the notation $\pd{}_RE$ for the projective dimension of
a left $R$\+module $E$ and $\fd E_R$ for the flat dimension of
a right $R$\+module~$E$.

 We will say that a left $R$\+module $A$ is \emph{$u$\+torsionfree} if
it is an $R$\+submodule of a left $U$\+module, or equivalently, if
the map $A\rarrow U\ot_RA$ induced by the ring homomorphism~$u$ is
injective.
 In other words, this means that the evaluation at $A$ of the adjunction
unit $\Id\rarrow u_*u^*$ is a monomorphism in $R\modl$.
 Similarly, we will say that a left $R$\+module $B$ is
\emph{$u$\+divisible} if it is a quotient module of a left
$U$\+module, or equivalently, if the map $\Hom_R(U,B)\rarrow B$
induced by~$u$ is surjective.
 In other words, this means that the evaluation at $B$ of the adjunction
counit $u_*u^!\rarrow\Id$ is an epimorphism in $R\modl$.

 Clearly, the class of all $u$\+torsionfree left $R$\+modules is
closed under subobjects, direct sums, and products in $R\modl$.
 Any left $R$\+module $A$ has a unique maximal $u$\+torsionfree
quotient module, which can be computed as the image of the natural
$R$\+module morphism $A\rarrow U\ot_RA$.
 The class of all $u$\+divisible left $R$\+modules is closed
under quotients, direct sums, and products.
 Any left $R$\+module $B$ has a unique maximal $u$\+divisible
submodule, which can be computed as the image of the natural
$R$\+module morphism $\Hom_R(U,B)\rarrow B$.

 A left $R$\+module $A$ is said to be \emph{$u$\+torsion} if its
maximal $u$\+torsionfree quotient module vanishes, or equivalently,
if $U\ot_RA=0$.
 Indeed, the $U$\+module $U\ot_RA$ is always generated by the image of
the map $u\ot\id_A\:A\rarrow U\ot_RA$; hence if the image of
$u\ot_R\id_A$ vanishes, then so does the whole module $U\ot_RA$.
 A left $R$\+module $B$ is said to be \emph{$u$\+reduced} if its
maximal $u$\+divisible submodule vanishes, or equivalently, if
$\Hom_R(U,B)=0$.
 Indeed, the map $\Hom(u,\id_B)\:\Hom_R(U,B)\rarrow B$ assigns
to an $R$\+module morphism $f\:U\rarrow B$ the element $f(1)\in B$.
 The action of $U$ in the left $R$\+module $\Hom_R(U,B)$ is given
by the rule $(vf)(w)=f(wv)$ for all $v$, $w\in U$.
 Hence if the image of the map $\Hom(u,\id_B)$ vanishes, then $f(v)=
(vf)(1)=\Hom(u,\id_B)(vf)=0$ for all $f\in\Hom_R(U,B)$ and $v\in U$,
so $f=0$.

\begin{rems}
 (1)~The commonly accepted terminology concerning divisibility goes back
to Matlis' memoir~\cite{Mat}, where the case of a commutative domain
$R$ with the field of fractions $Q$ was considered.
 In that context, an $R$\+module $B$ is said to be \emph{divisible} if
the action map $r\:B\rarrow B$ is surjective for every nonzero
element $r\in R$.
 An $R$\+module $B$ is said to be \emph{h\+divisible} if it is
a quotient module of a $Q$\+vector space.
 Similarly, an $R$\+module $B$ is said to be \emph{reduced} if it does
not have nonzero divisible submodules; and $B$ is \emph{h\+reduced} if
$\Hom_R(Q,B)=0$.
 Any h\+divisible $R$\+module is divisible, and any reduced $R$\+module
is h\+reduced; but the converse assertions do not hold in general.
 In fact, every divisible $R$\+module is h\+divisible if and only if
every h\+reduced $R$\+module is reduced and if and only if
$\pd{}_RQ\le1$ \cite[Theorem~10.1]{Mat}, \cite[Theorem~2.6]{Ham}
(domains $R$ satisfying these conditions are called \emph{Matlis
domains}).
 See~\cite[Lemma~1.8]{PMat}, \cite[Proposition~2.1(2)]{BP},
or~\cite[Theorem~6.3 and Example~6.5]{MS} together
with~\cite[Proposition~4.4]{AH} for generalizations.

 The classical definitions of divisible and reduced modules cannot be
extended to the setting in which the localization morphism
$q\:R\rarrow Q$ is replaced by a noncommutative ring
epimorphism $u\:R\rarrow U$.
 Our definitions of $u$\+divisible and $u$\+reduced modules generalize
the classical h\+divisibility and h\+reducedness properties.

\smallskip
 (2)~Let us warn the reader that our terminology is slightly confusing:
a left $R$\+module with no nonzero $u$\+torsion submodules does
\emph{not} need to be $u$\+torsionfree (unless $\fd U_R\le1$, as
we will see below).
 Similarly, a left $R$\+module with no $u$\+reduced quotient
modules does not need to be $u$\+divisible (unless $\pd{}_RU\le1$).
 The latter phenomenon manifests itself already in the case of
a localization morphism $q\:R\rarrow Q$ as in~(1)
(see~\cite[Theorem~10.1]{Mat} or~\cite[Lemma~1.8 and
Theorem~1.9]{Mat2}).
 The problem is that, unless the mentioned homological dimension
conditions are imposed on the $R$\+$R$\+bimodule $U$ or the ring
homomorphism~$u$, the classes of $u$\+torsionfree and $u$\+divisible
left $R$\+modules do not need to be closed under extensions.
\end{rems}

 The following theorem provides what appears to be the maximal natural
generality for the first of the two classical \emph{Matlis category
equivalences}~\cite[Theorem~3.4]{Mat}, \cite[Corollary~2.4]{Mat2} (going
back to Harrison's~\cite[Proposition~2.1]{Harr}).

\begin{thm} \label{first-matlis}
 Assume that\/ $\Tor^R_1(U,U)=0$.
 Then the restrictions of the adjoint functors
$M\longmapsto\Hom_R(U/R,M)$ and $C\longmapsto (U/R)\ot_RC$ are mutually
inverse equivalences between the additive categories of
$u$\+divisible left $u$\+comodules $M$ and $u$\+torsionfree
left $u$\+contramodules~$C$.
\end{thm}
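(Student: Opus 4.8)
The plan is to check that the two functors in question do land in the indicated subcategories, and that the adjunction unit and counit become isomorphisms when restricted there. I would start from the short exact sequence of $R$-$R$-bimodules $0\to R\to U\to U/R\to 0$ (or its right-exact/left-exact truncations, since $u$ need not be injective — the relevant point is that $R\to U$ has cokernel $U/R$ and that the kernel, being a $U$-submodule of $R$ killed by $u$, contributes nothing after tensoring with $U$ from either side). Applying $U\ot_R-$ to this sequence and using $\Tor^R_1(U,U)=0$ together with the epimorphism identity $U\ot_R U\cong U$ shows $U\ot_R(U/R)=0$ and, from the long exact sequence, gives a natural identification $\Tor^R_1(U,U/R)\cong U$ as left $U$-modules (more precisely, as the cokernel of $R\to U$, i.e.\ $U/R$ again? — no: the boundary map yields $\Tor_1^R(U,U/R)\cong \ker(U\to U\ot_R U)$, which using the epimorphism identities one computes to be the $u$-torsion part of $R$). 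Dually, applying $\Hom_R(U,-)$ and using the epimorphism identities $\Hom_R(U,U)\cong U$, one reads off $\Hom_R(U,U/R)$ and $\Ext^1_R(U,U/R)$; the key qualitative facts needed are that $U/R$ is itself both $u$-torsion and $u$-reduced on the appropriate side, which follow because $U\ot_R(U/R)=0$ and $\Hom_R(U/R,-)$-type computations reduce modulo $U$.

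Next I would verify the two functors are well-defined on objects. For $C$ a $u$-torsionfree $u$-contramodule, apply $-\ot_R C$ to $0\to R\to U\to U/R\to 0$: right-exactness gives $(U/R)\ot_R C = \coker(C\to U\ot_R C)$; this is a $U$-module, hence has vanishing $U\ot_R-$ and $\Tor^R_1(U,-)$ iff $U\ot_R(U/R\ot_R C)\cong (U\ot_R U/R)\ot_R C = 0$ — wait, that needs $U\ot_R(U/R)=0$, which we have, so $(U/R)\ot_R C$ is a $u$-comodule; and it is $u$-divisible because it is a quotient of the $U$-module $U\ot_R C$. For the other direction, $\Hom_R(U/R,M)$ with $M$ a $u$-divisible $u$-comodule is, by left-exactness of $\Hom_R(-,M)$ applied to the same sequence, the kernel $\ker(\Hom_R(U,M)\to M)$; this is a $U$-module (submodule of $\Hom_R(U,M)$), hence a $u$-contramodule once one checks $\Hom_R(U,\Hom_R(U/R,M))\cong\Hom_R(U\ot_R U/R,M)=0$; and it is $u$-torsionfree as a submodule of $\Hom_R(U,M)$, which is a $U$-module.

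Finally, the equivalence. I would compute the composite $(U/R)\ot_R\Hom_R(U/R,M)$ for $M$ a $u$-divisible $u$-comodule and show the natural evaluation-type map to $M$ is an isomorphism, and dually that $\Hom_R(U/R,(U/R)\ot_R C)\to C$ — or rather its adjoint — is an isomorphism for $C$ a $u$-torsionfree $u$-contramodule. The mechanism: using the exact sequences above twice, $(U/R)\ot_R\Hom_R(U/R,M)$ sits in a diagram built from $U\ot_R M$, $M$, $\Hom_R(U,M)$, and the hypothesis that $M$ is $u$-divisible ($\Hom_R(U,M)\twoheadrightarrow M$) and a $u$-comodule ($U\ot_R M = 0 = \Tor_1^R(U,M)$) collapses the diagram so that the composite becomes the identity on $M$. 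Symmetrically for $C$. The main obstacle I expect is the bookkeeping for non-injective $u$: one must everywhere separate the cokernel $U/R$ from the (torsion, $U$-module) kernel of $u$ and confirm that the latter is invisible to $U\ot_R-$ and $\Hom_R(U,-)$, so that all the identities one would write down in the injective case ($0\to R\to U\to U/R\to 0$ exact) survive with $R$ replaced by its image in $U$. A secondary technical point is checking the various $\Tor_1$ and $\Ext^1$ vanishings for the composite modules, which all reduce — via the epimorphism identities $U\ot_R U\cong U$, $\Hom_R(U,U)\cong U$ and associativity/adjunction — to the single hypothesis $\Tor^R_1(U,U)=0$ together with the already-established $U\ot_R(U/R)=0$.
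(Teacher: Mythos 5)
Your overall route is the same as the paper's: first check that $\Hom_R(U/R,{-})$ and $(U/R)\ot_R{-}$ land in the subcategories of $u$\+torsionfree $u$\+contramodules and $u$\+divisible $u$\+comodules, then use the two short exact sequences $0\to\Hom_R(U/R,M)\to\Hom_R(U,M)\to M\to0$ (from $u$\+divisibility) and $0\to C\to U\ot_RC\to(U/R)\ot_RC\to0$ (from $u$\+torsionfreeness), apply $U\ot_R{-}$ resp.\ $\Hom_R(U,{-})$, and compare to conclude that the adjunction maps are isomorphisms. That final diagram argument is correctly described and matches the paper, and your attention to the non-injective case (replacing $R$ by its image, with the kernel invisible to $U\ot_R{-}$) is exactly the right bookkeeping.

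There is, however, one genuine gap, and it is not the ``secondary technical point'' you call it: the vanishings $\Ext^1_R(U,\Hom_R(U/R,M))=0$ and $\Tor_1^R\bigl(U,\,(U/R)\ot_RC\bigr)=0$, which are half of the definition of $u$\+contramodule resp.\ $u$\+comodule for the composite modules, do \emph{not} follow from ``associativity/adjunction'' alone: $\Ext^1$ and $\Tor_1$ do not commute with tensoring in the way your phrasing suggests (you only verify the degree\+zero identities $\Hom_R(U,\Hom_R(U/R,M))\cong\Hom_R((U/R)\ot_RU,M)=0$ and $U\ot_R(U/R)\ot_RC=0$). What is needed is the degree\+one comparison: if $\Tor_1^R(U/R,U)=0$ there is a natural \emph{injection} $\Ext^1_R(U,\Hom_R(U/R,M))\hookrightarrow\Ext^1_R((U/R)\ot_RU,\,M)=0$, and dually, if $\Tor_1^R(U,U/R)=0$ there is a natural \emph{surjection} $\Tor_1^R(U\ot_R(U/R),\,C)\twoheadrightarrow\Tor_1^R(U,\,(U/R)\ot_RC)$ with zero source; these edge maps (or the corresponding spectral sequence) are the content of the paper's Lemma~\ref{matlis-hom-tensor-lemma}. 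The auxiliary hypotheses $\Tor_1^R(U/R,U)=0=\Tor_1^R(U,U/R)$ do follow from $\Tor_1^R(U,U)=0$ via the isomorphism $(R/\ker u)\ot_RU\cong U$ (your parenthetical computation of $\Tor_1^R(U,U/R)$ as ``the $u$\+torsion part of $R$'' is off: under the hypotheses it is zero). A smaller slip: $(U/R)\ot_RC$ and $\Hom_R(U/R,M)$ are \emph{not} $U$\+modules in general (a quotient or submodule of a $U$\+module need not be one; indeed a nonzero $u$\+comodule can never be a $U$\+module), though this does not affect your argument since you fall back on checking the comodule/contramodule conditions directly.
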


 Before proceeding to prove the theorem, let us formulate and prove
a lemma.

\begin{lem} \label{matlis-hom-tensor-lemma}
 If\/ $\Tor^R_1(U,U)=0$, then \par
\textup{(a)} for any left $R$\+module $M$, the left $R$\+module\/
$\Hom_R(U/R,M)$ is a $u$\+torsionfree $u$\+contramodule; \par
\textup{(b)} for any left $R$\+module $C$, the left $R$\+module\/
$(U/R)\ot_RC$ is a $u$\+divisible $u$\+comodule.
\end{lem}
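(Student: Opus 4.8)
The plan rests on one preliminary fact: when $\Tor^R_1(U,U)=0$, the $R$\+$R$\+bimodule $U/R$ is a $u$\+comodule both as a left and as a right $R$\+module, i.e.\ $U\ot_R(U/R)=0=\Tor^R_1(U,U/R)$ and $(U/R)\ot_RU=0=\Tor^R_1(U/R,U)$. To prove this, apply $U\ot_R{-}$ to the exact sequence $R\rarrow U\rarrow U/R\rarrow0$: the induced map $U=U\ot_RR\rarrow U\ot_RU$ is an isomorphism (one of the equivalent characterizations of a ring epimorphism), so $U\ot_R(U/R)=0$. For the $\Tor^R_1$ term, factor $u$ as $R\twoheadrightarrow\bar R\hookrightarrow U$ with $\bar R=\operatorname{im}u$ --- this is where the possible non-injectivity of~$u$ is absorbed --- apply $U\ot_R{-}$ to the two resulting short exact sequences, and feed $\Tor^R_1(U,U)=0$ into the long exact $\Tor$ sequences; one reads off that $U\ot_R\bar R\rarrow U\ot_RU$ is an isomorphism, whence $\Tor^R_1(U,U/R)=\Ker\bigl(U\ot_R\bar R\rarrow U\ot_RU\bigr)=0$. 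The right-module statements follow by the symmetric argument. Granting this, the four assertions of the lemma are obtained as follows.

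\emph{Part~(a).} Applying $\Hom_R({-},M)$ to $R\rarrow U\rarrow U/R\rarrow0$ identifies $\Hom_R(U/R,M)$ with the left $R$\+submodule $\Ker\bigl(\Hom_R(U,M)\rarrow M\bigr)$ of $\Hom_R(U,M)$. Since $\Hom_R(U,M)$ is in fact a left $U$\+module (via the right multiplication action of $U$ on itself), it is $u$\+torsionfree, and hence so is its submodule $\Hom_R(U/R,M)$, $u$\+torsionfree modules being closed under subobjects. That $\Hom_R(U/R,M)$ is a $u$\+contramodule has two halves. The tensor--hom adjunction identifies $\Hom_R\bigl(U,\Hom_R(U/R,M)\bigr)$ with $\Hom_R\bigl((U/R)\ot_RU,\,M\bigr)$, which is zero by the preliminary fact. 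For the $\Ext^1$ term, pick a projective resolution $P_\bullet\rarrow U$ of~$U$ as a left $R$\+module; then $\Hom_R\bigl(P_\bullet,\Hom_R(U/R,M)\bigr)\cong\Hom_R\bigl((U/R)\ot_RP_\bullet,\,M\bigr)$ as complexes, and $(U/R)\ot_RP_\bullet$ computes $\Tor^R_\bullet(U/R,U)$, so it has vanishing $H_0$ and $H_1$. One finishes with a short diagram chase: let $d_1\:(U/R)\ot_RP_1\rarrow(U/R)\ot_RP_0$ be the first differential and $Z=\Ker d_1$; since $H_0=H_1=0$, the map $d_1$ is surjective and $Z$ is the image of the differential $(U/R)\ot_RP_2\rarrow(U/R)\ot_RP_1$, and these two facts force $H^1$ of $\Hom_R\bigl((U/R)\ot_RP_\bullet,M\bigr)$ to vanish, i.e.\ $\Ext^1_R\bigl(U,\Hom_R(U/R,M)\bigr)=0$.

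\emph{Part~(b).} This is the mirror image of part~(a). Associativity of the tensor product gives $U\ot_R\bigl((U/R)\ot_RC\bigr)\cong\bigl(U\ot_R(U/R)\bigr)\ot_RC=0$, and applying ${-}\ot_RC$ to $U\rarrow U/R\rarrow0$ presents $(U/R)\ot_RC$ as a quotient of the $U$\+module $U\ot_RC$, so it is $u$\+divisible. For $\Tor^R_1\bigl(U,(U/R)\ot_RC\bigr)$, pick a projective resolution $Q_\bullet\rarrow U$ of~$U$ as a right $R$\+module, use the isomorphism $Q_\bullet\ot_R\bigl((U/R)\ot_RC\bigr)\cong\bigl(Q_\bullet\ot_R(U/R)\bigr)\ot_RC$, note that $Q_\bullet\ot_R(U/R)$ computes $\Tor^R_\bullet(U,U/R)$ and so again has vanishing $H_0$ and $H_1$, and run the same diagram chase as in part~(a) --- now with the right exact functor ${-}\ot_RC$ in place of the left exact functor $\Hom_R({-},M)$ --- to conclude $\Tor^R_1\bigl(U,(U/R)\ot_RC\bigr)=0$.

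The one genuinely non-formal step is this vanishing of $\Ext^1$ and $\Tor_1$: the passage from ``$U/R$ tensored with a resolution of~$U$ has no homology in degrees~$0$ and~$1$'' to the same statement after applying $\Hom_R({-},M)$ or ${-}\ot_RC$. It must be carried out by hand, since the relevant resolution of~$U$ is not a complex of $R$\+projectives; but it is short, as only degrees~$\le1$ are involved, so the single short exact sequence $0\rarrow Z\rarrow(U/R)\ot_RP_1\rarrow(U/R)\ot_RP_0\rarrow0$ together with the surjection onto~$Z$ from the next term is all that is needed. Everything else is tensor--hom formalism or the closure properties of $u$\+torsionfree and $u$\+divisible modules recorded before the lemma.
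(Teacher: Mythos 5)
Your proposal is correct and takes essentially the same route as the paper: both arguments reduce everything to the vanishing of $U\ot_R(U/R)$, $(U/R)\ot_RU$, $\Tor^R_1(U,U/R)$ and $\Tor^R_1(U/R,U)$ (handled via the image $\bar R$ of $u$, exactly as the paper does with $R/\ker(u)$), get torsionfreeness/divisibility from being a sub/quotient of a $U$\+module, and then transfer the vanishing through the tensor--hom adjunction, resp.\ associativity of the tensor product. The only difference is cosmetic: where the paper invokes the natural comparison maps $\Ext^1_R(L,\Hom_S(E,M))\hookrightarrow\Ext^1_S(E\ot_RL,M)$ and $\Tor^S_1(B\ot_RE,C)\twoheadrightarrow\Tor^R_1(B,E\ot_SC)$, you prove the needed special case by an explicit chase on a resolution of $U$, which is a correct unpacking of the same step.
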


\begin{proof}
 Part~(a): the left $R$\+module $\Hom_R(U/R,M)$ is $u$\+torsionfree as
an $R$\+submod\-ule of the left $U$\+module $\Hom_R(U,M)$.
 Furthermore, since $U\ot_RU=U$, we have $(U/R)\ot_RU=0$, and therefore
$\Hom_R(U,\Hom_R(U/R,M))=0$.

 To show that $\Ext^1_R(U,\Hom_R(U/R,M))=0$, one observes that our
assumptions $U\ot_RU=U$ and $\Tor^R_1(U,U)=0$ imply
$\Tor^R_1(U/R,U)=0$, because the map $(R/\ker(u))\ot_RU\rarrow U$ is
an isomorphism.

 For any associative rings $R$ and $S$, left $R$\+module $L$,
\ $S$\+$R$\+bimodule $E$, and left $S$\+module $M$ such that
$\Tor^R_1(E,L)=0$, there is a natural
injective map of abelian groups
$$
 \Ext^1_R(L,\Hom_S(E,M))\lrarrow\Ext^1_S(E\ot_RL,\>M).
$$
 In particular, in the situation at hand $\Ext^1_R(U,\Hom_R(U/R,M))$
is a subgroup of $\Ext^1_R((U/R)\ot_RU,\>M)=0$.

 The proof of part~(b) is dual-analogous.
 The left $R$\+module $(U/R)\ot_RC$ is $u$\+divisible as
a quotient $R$\+module of the left $U$\+module $U\ot_RC$.
 Since $U\ot_R(U/R)=0$, we have $U\ot_R(U/R)\ot_RC=0$.

 For any associative rings $R$ and $S$, right $R$\+module $B$, \
$R$\+$S$\+bimodule $E$, and left $S$\+module $C$ such that
$\Tor^R_1(B,E)=0$, there is a natural surjective map of abelian groups
$$
 \Tor^S_1(B\ot_RE,\>C)\lrarrow\Tor^R_1(B,\>E\ot_SC).
$$
 In particular, in the situation at hand $\Tor^R_1(U,\>(U/R)\ot_RC)$ is
a quotient group of $\Tor_1^R(U\ot_R(U/R),\>C)=0$.
 For a more high-tech derived category/spectral sequence presentation
of the same argument, see
Lemmas~\ref{matlis-tor-1-lemma}\+-\ref{matlis-ext-1-lemma} below.
\end{proof}

\begin{proof}[Proof of Theorem~\ref{first-matlis}]
 By Lemma~\ref{matlis-hom-tensor-lemma}, the functors $M\longmapsto
\Hom_R(U/R,M)$ and $C\longmapsto(U/R)\ot_RC$ take $u$\+divisible
left $u$\+comodules to $u$\+torsionfree left $u$\+contramodules and back
(in fact, they take arbitrary left $R$\+modules to left $R$\+modules
from these two classes).
 It remains to show that the restrictions of these functors to these
two full subcategories in $R\modl$ are mutually inverse equivalences
between them. {\hbadness=1900\par}

 Let $M$ be a $u$\+divisible left $u$\+comodule.
 We will show that the adjunction morphism $(U/R)\ot_R\Hom_R(U/R,M)
\rarrow M$ is an isomorphism.
 Since $M$ is $u$\+divisible, we have a natural short exact sequence
of left $R$\+modules
\begin{equation} \label{divisible-module-sequence}
 0\lrarrow\Hom_R(U/R,M)\lrarrow\Hom_R(U,M)\lrarrow M\lrarrow0.
\end{equation}
 Since the left $R$\+module $\Hom_R(U/R,M)$ is $u$\+torsionfree,
we also have a natural short exact sequence of left $R$\+modules
\begin{equation}\label{hom-module-torsionfree-sequence}
 0\rarrow\Hom_R(U/R,M)\rarrow U\ot_R\Hom_R(U/R,M)\rarrow
 U/R\ot_R\Hom_R(U/R,M)\rarrow0.
\end{equation}
 Since $M$ is a $u$\+comodule, applying the functor $U\ot_R{-}$ to
the short exact sequence~\eqref{divisible-module-sequence} produces
an isomorphism $U\ot_R\Hom_R(U/R,M)\cong U\ot_R\Hom_R(U,M)=\Hom_R(U,M)$.
 Now the commutative diagram
$$
\begin{tikzcd}
 \Hom_R(U/R,M) \arrow[r] \arrow[dd, Leftrightarrow, no head, no tail]
 & U\ot_R\Hom_R(U/R,M) \arrow[r] \arrow[d, "\cong"]
 & U/R\ot_R\Hom_R(U/R,M) \arrow[dd] \\
 & U\ot_R\Hom_R(U,M) \arrow[d, "\cong"] \\
 \Hom_R(U/R,M)\arrow[r] & \Hom_R(U,M) \arrow[r] & M
\end{tikzcd}
$$
shows that we have a morphism from the short exact
sequence~\eqref{hom-module-torsionfree-sequence} to the short exact sequence~\eqref{divisible-module-sequence} that is the identity on
the leftmost terms, an isomorphism on the middle terms, and
the adjunction morphism on the rightmost terms.
 Therefore, the adjunction morphism is an isomorphism.

 Let $C$ be a $u$\+torsionfree left $u$\+contramodule.
 Let us show that the adjunction morphism $C\rarrow\Hom_R(U/R,\>
(U/R)\ot_RC)$ is an isomorphism.
 Since $C$ is $u$\+torsionfree, we have a natural short exact
sequence of left $R$\+modules
\begin{equation} \label{torsionfree-module-sequence}
 0\lrarrow C\lrarrow U\ot_RC\lrarrow (U/R)\ot_RC\lrarrow0.
\end{equation}
 Since the left $R$\+module $(U/R)\ot_RC$ is $u$\+divisible, we also
have a natural short exact sequence of left $R$\+modules
\begin{equation} \label{tensor-module-divisible-sequence}
 0\rarrow\Hom_R(U/R,\>(U/R)\ot_RC)\rarrow\Hom_R(U,\>(U/R)\ot_RC)
 \rarrow (U/R)\ot_RC\rarrow0.
\end{equation}
 Since $C$ is a $u$\+contramodule, applying the functor $\Hom_R(U,{-})$
to the short exact sequence~\eqref{torsionfree-module-sequence}
produces an isomorphism $U\ot_RC=\Hom_R(U,\>U\ot_RC)\cong
\Hom_R(U,\>\allowbreak(U/R)\ot_RC)$.
 Now the commutative diagram
$$
\begin{tikzcd}
 C \arrow[r] \arrow[dd] & U\ot_R C \arrow[r] \arrow[d, "\cong"] &
 (U/R)\ot_RC \arrow[dd, Leftrightarrow, no head, no tail] \\
 & \Hom_R(U,\>U\ot_RC) \arrow[d, "\cong"] \\
 \Hom_R(U/R,\>(U/R)\ot_RC) \arrow[r] & \Hom_R(U,\>(U/R)\ot_RC)
 \arrow[r] & (U/R)\ot_RC
\end{tikzcd}
$$
shows that we have a morphism from the short exact
sequence~\eqref{torsionfree-module-sequence} to the short exact
sequence~\eqref{tensor-module-divisible-sequence} that is the identity
on the rightmost terms, an isomorphism on the middle terms, and
the adjunction morphism on the leftmost terms.
 Therefore, the adjunction morphism is an isomorphism.
\end{proof}

\Section{Second Additive Category Equivalence}
\label{second-additive-equivalence}

 Let $K^\bu$ denote the two-term complex $R\rarrow U$, with the term
$R$ placed in the cohomological degree~$-1$ and the term $U$ in
the cohomological degree~$0$.
 We will view $K^\bu$ as an object of the bounded derived category of
$R$\+$R$\+bimodules $\sD^\bb(R\bimod R)$.
 So, there is a distinguished triangle
\begin{equation} \label{main-triangle}
 R\lrarrow U\lrarrow K^\bu\lrarrow R[1]
\end{equation}
in the triangulated category $\sD^\bb(R\bimod R)$.

 Alternatively, the complex $K^\bu$ can be considered as an object
of the bounded derived category of left $R$\+modules $\sD^\bb(R\modl)$
endowed with a right action of the ring $R$ by its derived category
object endomorphisms, or as an object of the bounded derived category
of right $R$\+modules $\sD^\bb(\modr R)$ endowed with a left action
of~$R$.
 Then~\eqref{main-triangle} is viewed as a distinguished triangle
in $\sD^\bb(R\modl)$ or $\sD^\bb(\modr R)$.

 By an abuse of notation, given a left $R$\+module $B$, we will denote
simply by 
$$
 \Ext^n_R(K^\bu,B)=H^n(\boR\Hom_R(K^\bu,B))=
 \Hom_{\sD^\bb(R\modl)}(K^\bu,B[n])
$$
the abelian group of all morphisms $K^\bu\rarrow B[n]$ in the derived
category of left $R$\+modules.
 The right action of $R$ in the object $K^\bu\in\sD^\bb(R\modl)$ induces
a left $R$\+module structure on the groups $\Ext^n_R(K^\bu,B)$.
 
 Similarly, we set
$$
 \Tor_n^R(K^\bu,A)=H^{-n}(K^\bu\ot_R^\boL A)
$$
for any left $R$\+module~$A$.
 Here $K^\bu$ is viewed as an object of the bounded derived category of
right $R$\+modules for the purpose of computing the derived tensor
product $K^\bu\ot_R^\boL A$, and then the left action of $R$ in
the object $K^\bu\in\sD^\bb(\modr R)$ induces a left $R$\+module
structure on the groups $\Tor^R_n(K^\bu,A)$.

\begin{lem} \label{tor-ext-with-K}
 For every left $R$\+module $C$, there are natural isomorphisms
of left $R$\+modules \par
\textup{(a)} $\Tor_n^R(K^\bu,C)=0=\Ext^n_R(K^\bu,C)$ for $n<0$; \par
\textup{(b)} $\Tor_n^R(K^\bu,C)=\Tor_n^R(U,C)$ and\/
$\Ext^n_R(K^\bu,C)=\Ext^n_R(U,C)$ for all $n>1$; \par
\textup{(c)} $\Tor_0^R(K^\bu,C)=(U/R)\ot_R C$ and\/
$\Ext^0_R(K^\bu,C)=\Hom_R(U/R,C)$.
\end{lem}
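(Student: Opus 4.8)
The crucial observation is that, by its very construction, $K^\bu$ is the cone of the morphism $u\:R\rarrow U$, so that the distinguished triangle~\eqref{main-triangle} is available both in $\sD^\bb(R\modl)$ and in $\sD^\bb(\modr R)$. The plan is to deduce all three parts of the lemma by applying the triangulated functors $\boR\Hom_R({-},C)$ and ${-}\ot_R^\boL C$ to~\eqref{main-triangle} and passing to cohomology.

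For part~(a), I would first represent $K^\bu$ by a bounded-above complex of projective $R$\+modules concentrated in cohomological degrees $\le0$: the term $R$ is already projective, and the term $U$ can be replaced by any projective resolution placed in degrees $\le0$. Then $\boR\Hom_R(K^\bu,C)$ is computed by a complex concentrated in degrees $\ge0$ and $K^\bu\ot_R^\boL C$ by one concentrated in degrees $\le0$, whence $\Ext^n_R(K^\bu,C)=0=\Tor_n^R(K^\bu,C)$ for $n<0$. (Alternatively, this also falls out of the long exact sequences used below, since $\Ext^n_R(R,C)=0=\Tor_n^R(R,C)$ for $n\ne0$.)

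For parts~(b) and~(c), applying the contravariant triangulated functor $\boR\Hom_R({-},C)$ to~\eqref{main-triangle} and using $\boR\Hom_R(R,C)\cong C$ (in degree~$0$) gives a distinguished triangle
$$
 \boR\Hom_R(K^\bu,C)\lrarrow\boR\Hom_R(U,C)\lrarrow C\lrarrow\boR\Hom_R(K^\bu,C)[1],
$$
in which the second arrow induces on $H^0$ the map $\Hom_R(U,C)\rarrow\Hom_R(R,C)=C$ given by composition with~$u$. The associated long exact sequence of $\Ext$\+groups yields, for $n>1$, isomorphisms $\Ext^n_R(K^\bu,C)\cong\Ext^n_R(U,C)$, since $\Ext^{n-1}_R(R,C)=0=\Ext^n_R(R,C)$; this is the $\Ext$ half of~(b). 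In low degrees the same sequence identifies $\Ext^0_R(K^\bu,C)$ with the kernel of the map $\Hom_R(U,C)\rarrow C$ induced by~$u$, and since $U/R=\coker u$, applying the left-exact contravariant functor $\Hom_R({-},C)$ to the right-exact sequence $R\rarrow U\rarrow U/R\rarrow0$ exhibits this kernel as $\Hom_R(U/R,C)$; this is the $\Ext$ half of~(c). The $\Tor$ assertions are dual: applying the triangulated functor ${-}\ot_R^\boL C$ to~\eqref{main-triangle} produces a distinguished triangle
$$
 C\lrarrow U\ot_R^\boL C\lrarrow K^\bu\ot_R^\boL C\lrarrow C[1],
$$
whose long exact sequence of $\Tor$\+groups gives $\Tor_n^R(K^\bu,C)\cong\Tor_n^R(U,C)$ for $n>1$ and presents $\Tor_0^R(K^\bu,C)$ as the cokernel of the map $R\ot_R C\rarrow U\ot_R C$ induced by~$u$; by right-exactness of ${-}\ot_R C$ applied to $R\rarrow U\rarrow U/R\rarrow0$, this cokernel equals $(U/R)\ot_R C$.

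Finally, since every map above is functorial in the object $K^\bu$ together with its right (respectively, left) action of $R$, all of these identifications are isomorphisms of left $R$\+modules for the module structures fixed before the statement. I do not foresee any genuine obstacle here; the only care needed is the bookkeeping of cohomological degrees in the two long exact sequences, and keeping in mind that, as $u$ is not assumed injective, the identifications in~(c) must go through the right-exactness of $\ot$ and the left-exactness of $\Hom$ rather than through a short exact sequence $0\rarrow R\rarrow U\rarrow U/R\rarrow0$.
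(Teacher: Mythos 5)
Your proposal is correct and takes essentially the same route as the paper, whose proof likewise derives all three parts from the (co)homology long exact sequences obtained by applying $\boR\Hom_R({-},C)$ and ${-}\ot_R^\boL C$ to the distinguished triangle~\eqref{main-triangle}. The additional care you take in low degrees, identifying the kernel and cokernel via the left-exactness of $\Hom_R({-},C)$ and right-exactness of ${-}\ot_RC$ applied to $R\rarrow U\rarrow U/R\rarrow0$ (rather than assuming $u$ injective), is precisely the bookkeeping the paper leaves implicit.
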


\begin{proof}
 All the assertions follow immediately from the (co)homology long exact
sequences obtained by applying the functors $\boR\Hom_R({-},C)$ and
${-}\ot_R^\boL C$ to the distinguished triangle~\eqref{main-triangle}.
\end{proof}

 Furthermore, for any left $R$\+modules $A$ and $B$ there are five-term
exact sequences of low-dimensional $\Tor$ and $\Ext$ induced by
the distinguished triangle~\eqref{main-triangle}:
\begin{multline} \label{main-tor-sequence}
 0\lrarrow\Tor^R_1(U,A)\lrarrow\Tor^R_1(K^\bu,A) \\
 \lrarrow A\lrarrow U\ot_R A\lrarrow\Tor_0^R(K^\bu,A)\lrarrow0
\end{multline}
and
\begin{multline} \label{main-ext-sequence}
 0\lrarrow\Ext^0_R(K^\bu,B)\lrarrow\Hom_R(U,B)\lrarrow B \\
 \lrarrow\Ext^1_R(K^\bu,B)\lrarrow\Ext^1_R(U,B)\lrarrow0.
\end{multline}
 Both~\eqref{main-tor-sequence} and~\eqref{main-ext-sequence} are exact
sequences of left $R$\+modules.

 Borrowing the terminology of Matlis~\cite{Mat},
 we will say that a left $R$\+module $A$ is \emph{$u$\+special} if
the map $A\rarrow U\ot_RA$ is surjective.
 Equivalently (in view of the exact sequence~\eqref{main-tor-sequence}
or Lemma~\ref{tor-ext-with-K}(c)), this means that $\Tor^R_0(K^\bu,A)=0$.
 Similarly, a left $R$\+module $B$ is \emph{$u$\+cospecial} if the map
$\Hom_R(U,B)\rarrow B$ is injective.
 Equivalently (by the exact sequence~\eqref{main-ext-sequence}
or Lemma~\ref{tor-ext-with-K}(c)), this means that
$\Ext_R^0(K^\bu,B)=0$.

 The next lemma provides another characterization of
$u$\+special and $u$\+cospecial modules.

\begin{lem} \label{special-and-cospecial}
\textup{(a)} A left $R$\+module $A$ is $u$\+special if and only if
its maximal $u$\+torsionfree quotient module is a $U$\+module. \par
\textup{(b)} A left $R$\+module $B$ is $u$\+cospecial if and only if
its maximal $u$\+divisible submodule is a $U$\+module.
\end{lem}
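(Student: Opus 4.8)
My plan is to reduce both statements to the elementary observation that, for a ring epimorphism $u\:R\rarrow U$, a left $R$\+module $M$ is (the restriction of scalars of) a $U$\+module if and only if the canonical morphism $M\rarrow U\ot_RM$ induced by~$u$ is an isomorphism, and equally if and only if the canonical evaluation morphism $\Hom_R(U,M)\rarrow M$ is an isomorphism. Indeed, $U\ot_RM$ and $\Hom_R(U,M)$ are $U$\+modules for every~$M$, so these conditions are sufficient; and they are necessary because the adjunction counit $u^*u_*\rarrow\Id$ and the adjunction unit $\Id\rarrow u^!u_*$ are isomorphisms, as recalled at the beginning of Section~\ref{first-additive-equivalence}.

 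For part~(a), let $\phi_A\:A\rarrow U\ot_RA$ be the canonical morphism and factor it as a surjection $\pi\:A\rarrow\bar A$ followed by the inclusion $\iota\:\bar A\rarrow U\ot_RA$ of its image, so that $\bar A$ is the maximal $u$\+torsionfree quotient module of~$A$. If $A$ is $u$\+special, then $\phi_A$ is surjective, hence $\bar A=U\ot_RA$ is a $U$\+module; this is the easy implication. For the converse, assume $\bar A$ is a $U$\+module, so that $\phi_{\bar A}\:\bar A\rarrow U\ot_R\bar A$ is an isomorphism. The composite of the morphism $U\ot_RA\rarrow U\ot_RU\ot_RA$ obtained by applying the functor $U\ot_R{-}$ to~$\phi_A$ with the multiplication morphism $U\ot_RU\ot_RA\rarrow U\ot_RA$ is the identity map of $U\ot_RA$ (it sends $v\ot a$ to $v\ot 1\ot a$ and then back to $v\ot a$). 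Since applying $U\ot_R{-}$ to the equality $\phi_A=\iota\pi$ factors the first of these two morphisms as $U\ot_R\pi$ followed by $U\ot_R\iota$, the morphism $U\ot_R\pi\:U\ot_RA\rarrow U\ot_R\bar A$ has a left inverse and so is a split monomorphism; being also an epimorphism, it is an isomorphism. Finally, naturality of the canonical morphisms gives $(U\ot_R\pi)\circ\phi_A=\phi_{\bar A}\circ\pi$, whence $\phi_A=(U\ot_R\pi)^{-1}\circ\phi_{\bar A}\circ\pi$ is surjective; that is, $A$ is $u$\+special.

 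Part~(b) is dual-analogous, and I would carry it through in the same way with the arrows reversed. Factor the canonical evaluation morphism $\theta_B\:\Hom_R(U,B)\rarrow B$ as a surjection $\rho\:\Hom_R(U,B)\rarrow\tilde B$ followed by the inclusion $\kappa\:\tilde B\rarrow B$ of its image, so that $\tilde B$ is the maximal $u$\+divisible submodule of~$B$. If $B$ is $u$\+cospecial, then $\theta_B$ is injective, hence $\tilde B\cong\Hom_R(U,B)$ is a $U$\+module. For the converse, assume $\tilde B$ is a $U$\+module, so that $\theta_{\tilde B}\:\Hom_R(U,\tilde B)\rarrow\tilde B$ is an isomorphism. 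There is a natural morphism $\Hom_R(U,B)\rarrow\Hom_R(U,\Hom_R(U,B))$ sending $f$ to the map $v\longmapsto vf$ (formed using the $U$\+module structure on $\Hom_R(U,B)$), and its composite with $\Hom_R(U,\theta_B)$ is the identity map of $\Hom_R(U,B)$. Since applying $\Hom_R(U,{-})$ to $\theta_B=\kappa\rho$ factors $\Hom_R(U,\theta_B)$ as $\Hom_R(U,\rho)$ followed by $\Hom_R(U,\kappa)$, this exhibits $\Hom_R(U,\kappa)$ as a split epimorphism; being also a monomorphism (since $\Hom_R(U,{-})$ is left exact), it is an isomorphism. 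Finally, naturality of the canonical morphisms gives $\kappa\circ\theta_{\tilde B}=\theta_B\circ\Hom_R(U,\kappa)$, whence $\theta_B=\kappa\circ\theta_{\tilde B}\circ\Hom_R(U,\kappa)^{-1}$ is injective; that is, $B$ is $u$\+cospecial.

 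The one point I expect to require care is to resist the temptation to argue instead through the kernel of~$\phi_A$ and the cokernel of~$\theta_B$ together with the five-term exact sequences~\eqref{main-tor-sequence} and~\eqref{main-ext-sequence}: without a bound on the flat or projective dimension of the $R$\+$R$\+bimodule~$U$ that kernel need not be $u$\+torsion and that cokernel need not be $u$\+reduced (compare the remarks preceding Theorem~\ref{first-matlis}), so the argument must stay with the canonical morphisms and the isomorphism $U\ot_RU\cong U$. Everything else is routine diagram chasing.
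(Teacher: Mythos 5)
Your proof is correct. The easy ("only if") directions coincide with the paper's, but your argument for the nontrivial directions takes a genuinely different route. The paper first rewrites the two conditions as $(U/R)\ot_RA=0$ and $\Hom_R(U/R,B)=0$ (Lemma~\ref{tor-ext-with-K}(c)); then for part~(b) it observes that any morphism $U/R\rarrow B$ has $u$\+divisible image, hence factors through the maximal $u$\+divisible submodule $D$, and $\Hom_R(U/R,D)=0$ because $D$ is a $U$\+module, while for part~(a) it uses the character-module duality $\Hom_\boZ({-},\boQ/\boZ)$: since $\Hom_\boZ(U/R,\boQ/\boZ)$ is $u$\+torsionfree, homomorphisms from $A$ into it factor through the maximal $u$\+torsionfree quotient $D$, whence $\Hom_\boZ((U/R)\ot_RA,\>\boQ/\boZ)\cong\Hom_\boZ((U/R)\ot_RD,\>\boQ/\boZ)=0$ and so $(U/R)\ot_RA=0$. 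You instead stay with the unit/counit morphisms: the retraction identities (multiplication splitting $U\ot_R\phi_A$, and $f\longmapsto(v\mapsto vf)$ splitting $\Hom_R(U,\theta_B)$) do show that $U\ot_R\pi$ and $\Hom_R(U,\kappa)$ are isomorphisms, and naturality then transports the isomorphisms $\phi_{\bar A}$ and $\theta_{\tilde B}$ into surjectivity of $\phi_A$ and injectivity of $\theta_B$, as you say. Your version makes (a) and (b) strictly dual and establishes, in passing, the generally useful isomorphisms $U\ot_RA\cong U\ot_R\bar A$ and $\Hom_R(U,\tilde B)\cong\Hom_R(U,B)$ valid for any ring epimorphism; the paper's version is shorter (especially in~(b)) and ties the lemma directly to the vanishing of $\Tor_0^R(K^\bu,{-})$ and $\Ext^0_R(K^\bu,{-})$ used later. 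Your closing caution about not routing the argument through the five-term sequences is consistent with the paper, which likewise avoids them here since no dimension hypotheses are assumed.
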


\begin{proof}
 Part~(b): if $B$ is $u$\+cospecial, then the morphism $\Hom_R(U,B)
\rarrow B$ is injective, so $\Hom_R(U,B)$ is the maximal
$u$\+divisible submodule of~$B$.
 Conversely, if the maximal $u$\+divisible submodule of $B$ is
a $U$\+module $D$, then $\Hom_R(U/R,B)=\Hom_R(U/R,D)=0$.

 Part~(a): if $A$ is $u$\+special, then the morphism $A\rarrow U\ot_RA$
is surjective, so $U\ot_RA$ is the maximal $u$\+torsionfree
quotient module of~$A$.
 Conversely, assume that the maximal $u$\+torsionfree quotient module
of $A$ is a $U$\+module~$D$.
 Note that $\Hom_\boZ(U/R,\boQ/\boZ)$ is a $u$\+torsionfree left
$R$\+module, because it is a submodule of the left $U$\+module
$\Hom_\boZ(U,\boQ/\boZ)$.
 Hence
\begin{multline*}
 \Hom_\boZ((U/R)\ot_RA,\>\boQ/\boZ)\cong
 \Hom_R(A,\Hom_\boZ(U/R,\boQ/\boZ)) \\
 =\Hom_R(D,\Hom_\boZ(U/R,\boQ/\boZ))\cong
 \Hom_\boZ((U/R)\ot_R D,\>\boQ/\boZ),
\end{multline*}
and the last term is zero since $D$ is a $U$\+module.
 It follows that $(U/R)\ot_RA=0$.
\end{proof}

 The following theorem is our version of the second Matlis
category equivalence~\cite[Theorem~3.8]{Mat} (going back to
Harrison's~\cite[Proposition~2.3]{Harr}).

\begin{thm} \label{second-matlis}
 Assume that\/ $\Tor^R_1(U,U)=0=\Tor^R_2(U,U)$.
 Then the restrictions of the functors
$M\longmapsto\Ext_R^1(K^\bu,M)$ and $C\longmapsto\Tor^R_1(K^\bu,C)$ are
mutually inverse equivalences between the additive categories of
$u$\+cospecial left $u$\+comodules $M$ and $u$\+special
left $u$\+contramodules~$C$.
\end{thm}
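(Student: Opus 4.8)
The plan is to mimic the structure of the proof of Theorem~\ref{first-matlis}, replacing the functors $\Hom_R(U/R,-)$ and $(U/R)\ot_R-$ by $\Ext^1_R(K^\bu,-)$ and $\Tor^R_1(K^\bu,-)$, and replacing the roles played there by $u$\+torsionfree and $u$\+divisible modules by $u$\+special and $u$\+cospecial modules. First I would establish the analogue of Lemma~\ref{matlis-hom-tensor-lemma}: namely that for an arbitrary left $R$\+module $M$, the left $R$\+module $\Ext^1_R(K^\bu,M)$ is a $u$\+special left $u$\+contramodule, and for an arbitrary left $R$\+module $C$, the left $R$\+module $\Tor^R_1(K^\bu,C)$ is a $u$\+cospecial left $u$\+comodule. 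Here the $u$\+comodule and $u$\+contramodule properties should be read off from the long exact sequences obtained by applying $\boR\Hom_R(K^\bu,-)$ (resp.\ $K^\bu\ot_R^\boL-$) to the distinguished triangle~\eqref{main-triangle} and using Lemma~\ref{tor-ext-with-K}: the vanishing of $\Tor^R_1(U,U)$ and $\Tor^R_2(U,U)$ feeds in exactly to kill the relevant $\Tor^R_{0,1}(U,\Tor^R_1(K^\bu,C))$ and $\Ext^{0,1}_R(U,\Ext^1_R(K^\bu,M))$ terms. The $u$\+special and $u$\+cospecial properties should come from the natural maps $K^\bu\to U$ and $R\to K^\bu$ in the derived category, i.e.\ from the two "halves" of the triangle~\eqref{main-triangle}, together with the idempotence-type identity $K^\bu\ot^\boL_R K^\bu\cong K^\bu$ in $\sD^\bb(R\bimod R)$ (valid because $\Tor^R_1(U,U)=0$ forces $U\ot^\boL_RU\cong U$, whence the cone $K^\bu$ also satisfies $K^\bu\ot^\boL_RK^\bu\cong K^\bu$), which plays the part that the identities $(U/R)\ot_RU=0$ and $U\ot_R(U/R)=0$ played in the first equivalence.

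Next I would show the two adjunction morphisms are isomorphisms on the respective subcategories. There is a canonical morphism of functors coming from the $\otimes$--$\Hom$ adjunction for the object $K^\bu$, giving maps $\Tor^R_1(K^\bu,\Ext^1_R(K^\bu,M))\to M$ and $C\to\Ext^1_R(K^\bu,\Tor^R_1(K^\bu,C))$; one has to check these are defined and are the natural candidates for the unit and counit. For a $u$\+cospecial left $u$\+comodule $M$, the defining property $\Ext^0_R(K^\bu,M)=0$ together with the $u$\+comodule vanishing $\Ext^n_R(U,M)=\Ext^n_R(K^\bu,M)$ for $n>1$ (Lemma~\ref{tor-ext-with-K}(b)) and a dimension-shift should let me replace the complex $\boR\Hom_R(K^\bu,M)$ by $\Ext^1_R(K^\bu,M)[-1]$ up to a controlled error, and symmetrically for the $\Tor$ side. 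The cleanest route is to run the argument in the derived category: exhibit a short exact sequence (or two-step filtration) relating $M$, $\Hom_R(U,M)$ (or its derived version), and $\Ext^1_R(K^\bu,M)$, then apply $\Tor^R_1(K^\bu,-)$ and compare via a commutative diagram, exactly as in the two tikz-cd diagrams in the proof of Theorem~\ref{first-matlis}. The hypotheses $\Tor^R_1(U,U)=0=\Tor^R_2(U,U)$ are precisely what is needed so that applying $K^\bu\ot^\boL_R-$ to a complex with cohomology supported in the range controlled by $K^\bu$ does not create unwanted higher $\Tor$ contributions; this is where those two vanishings get used a second time.

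The main obstacle I anticipate is the bookkeeping of degrees and the verification that the relevant spectral sequences or long exact sequences degenerate enough: unlike in Theorem~\ref{first-matlis}, where everything was a genuine short exact sequence of modules, here $K^\bu$ is a two-term complex and the functors $\Ext^1_R(K^\bu,-)$, $\Tor^R_1(K^\bu,-)$ are "degree~$1$" functors, so one must carefully track that the compositions $\Tor^R_1(K^\bu,\Ext^1_R(K^\bu,-))$ and $\Ext^1_R(K^\bu,\Tor^R_1(K^\bu,-))$ see only the intended cohomology and that the higher terms $\Tor^R_{\ge2}$, $\Ext^{\ge2}_R$ against $U$ (which by Lemma~\ref{tor-ext-with-K}(b) agree for $K^\bu$ and $U$) are annihilated by the $u$\+comodule/$u$\+contramodule conditions. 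I expect the promised Lemmas~\ref{matlis-tor-1-lemma}--\ref{matlis-ext-1-lemma} (the "high-tech derived category/spectral sequence" versions referenced at the end of the proof of Lemma~\ref{matlis-hom-tensor-lemma}) to be exactly the tool that makes this bookkeeping clean, so I would prove those first and then deduce Theorem~\ref{second-matlis} by feeding $K^\bu$ into them in place of $U/R$. Once the two adjunction morphisms are shown to be isomorphisms, full faithfulness and essential surjectivity of the two functors, hence the equivalence of the two additive categories, follow formally.
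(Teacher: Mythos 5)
There is a genuine gap at the very first step. You propose to prove, as the analogue of Lemma~\ref{matlis-hom-tensor-lemma}, that $\Ext^1_R(K^\bu,M)$ is a $u$\+special $u$\+contramodule and $\Tor^R_1(K^\bu,C)$ is a $u$\+cospecial $u$\+comodule \emph{for arbitrary} left $R$\+modules $M$ and $C$. Under the hypotheses of Theorem~\ref{second-matlis} this is false. Take $R=\boZ$, $U=\boQ$, $M=\boZ$: then $K^\bu\simeq\boQ/\boZ$ and $\Ext^1_\boZ(\boQ/\boZ,\boZ)\cong\prod_p\boZ_p$, which is not $u$\+special, since $(\boQ/\boZ)\ot_\boZ\boZ_p\neq0$; dually, $\Tor_1^\boZ(\boQ/\boZ,\boQ/\boZ)\cong\boQ/\boZ$ is not $u$\+cospecial. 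This is exactly why the paper's Lemmas~\ref{matlis-tor-1-lemma}(b) and~\ref{matlis-ext-1-lemma}(b) carry the extra hypotheses $\Tor^R_0(K^\bu,A)=0$, resp.\ $\Ext^0_R(K^\bu,B)=0$, on the input (the unconditional parts~(c) need $\fd U_R\le1$, resp.\ $\pd{}_RU\le1$, which Theorem~\ref{second-matlis} does not assume), and why the theorem is stated only for cospecial comodules and special contramodules. Relatedly, your proposed substitute for the identities $(U/R)\ot_RU=0=U\ot_R(U/R)$, namely $K^\bu\ot_R^\boL K^\bu\cong K^\bu$, is not available here: it requires $\Tor^R_i(U,U)=0$ for \emph{all} $i\ge1$ (a homological epimorphism), whereas only $\Tor^R_1$ and $\Tor^R_2$ are assumed to vanish; at best a truncated version holds, and in any case it cannot deliver the (false) unconditional specialness claim.

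The repair is to use the hypotheses on $M$ and $C$ from the start, which is what the paper does: for a $u$\+cospecial $u$\+comodule $M$, the sequence~\eqref{main-ext-sequence} becomes the four-term sequence $0\to\Hom_R(U,M)\to M\to\Ext^1_R(K^\bu,M)\to\Ext^1_R(U,M)\to0$; splitting it at the image $E$ of $M\to\Ext^1_R(K^\bu,M)$ and using that $\Tor^R_i(K^\bu,D)=0$ for $-1\le i\le2$ for every left $U$\+module $D$ (this is where $\Tor^R_1(U,U)=0=\Tor^R_2(U,U)$ enters), one gets $\Tor^R_i(K^\bu,\Ext^1_R(K^\bu,M))\cong\Tor^R_i(K^\bu,M)$ for $i=0,1$. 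Specialness of $\Ext^1_R(K^\bu,M)$ then comes from $\Tor^R_0(K^\bu,M)=(U/R)\ot_RM=0$, i.e.\ from $M$ being a comodule, not from any idempotence of $K^\bu$; and $\Tor^R_1(K^\bu,M)\cong M$ because $U\ot_RM=0=\Tor^R_1(U,M)$, giving $\Tor^R_1(K^\bu,\Ext^1_R(K^\bu,M))\cong M$. The contramodule property of $\Ext^1_R(K^\bu,M)$ is Lemma~\ref{matlis-ext-1-lemma}(b), applied legitimately because $M$ is cospecial. The other composition is handled dual-analogously. Your second paragraph gestures at this comparison-of-exact-sequences argument, but as written it rests on the false preliminary lemma and the unavailable identity $K^\bu\ot^\boL_RK^\bu\cong K^\bu$, so the degree bookkeeping you worry about cannot be made to close without reinstating the cospecial/special hypotheses where the paper uses them.
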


 We are going to use a fairly well-known spectral sequence technique.
 It is formulated in the proposition below, for lack of a suitable
reference covering the required generality.
 The following generalization of the notation introduced in
the beginning of this section is presumed in the proposition.

 Given an associative ring $S$, a complex of left $S$\+modules $L^\bu$,
and a complex of right $S$\+modules $M^\bu$, we put
$\Tor_n^S(M^\bu,L^\bu)=H^{-n}(M^\bu\ot_S^\boL L^\bu)$ for every
$n\in\boZ$.
 If $M^\bu$ is a complex of $R$\+$S$\+bimodules, then
$M^\bu\ot_S^\boL L^\bu$ is an object of the derived category
$\sD(R\modl)$ and the abelian groups $\Tor_n^S(M^\bu,L^\bu)$ have
left $R$\+module structures.
 Given two complexes of left $S$\+modules $M^\bu$ and $L^\bu$,
we put $\Ext_S^n(M^\bu,L^\bu)=H^n(\boR\Hom_S(M^\bu,L^\bu))
=\Hom_{\sD(S\modl)}(M^\bu,L^\bu[n])$ for every $n\in\boZ$.
 If $M^\bu$ is a complex of $S$\+$R$\+bimodules, then
$\boR\Hom_S(M^\bu,L^\bu)$ is an object of the derived category
$\sD(R\modl)$ and the abelian groups $\Ext_S^n(M^\bu,L^\bu)$
have left $R$\+module structures.

 In order to avoid spectral sequence convergence issues, we assume our
complexes to be suitably bounded.

\begin{prop} \label{associativity-spectral-sequences}
\textup{(a)} Let $L^\bu$ be a bounded above complex of left
$S$\+modules, $M^\bu$ be a bounded above complex of $R$\+$S$\+bimodules,
and $N^\bu$ be a bounded above complex of right $R$\+modules.
 Then there is a spectral sequence of abelian groups
$$
 E^2_{pq}=\Tor_p^R(N^\bu,\Tor_q^S(M^\bu,L^\bu))\Longrightarrow
 E^\infty_{pq}=\mathrm{gr}_p
 H^{-p-q}(N^\bu\ot_R^\boL M^\bu\ot_S^\boL L^\bu),
$$
with the differentials $\partial^r_{pq}\:E^r_{pq}\rarrow E^r_{p-r,q+r-1}$,
\ $r\ge2$, converging to the Tor groups
$H^{-p-q}(N^\bu\ot_R^\boL M^\bu\ot_S^\boL L^\bu)=
\Tor^S_{p+q}(N^\bu\ot_R^\boL M^\bu,\>L^\bu)$. \par
\textup{(b)} Let $L^\bu$ be a bounded below complex of left
$S$\+modules, $M^\bu$ be a bounded above complex of $S$\+$R$\+bimodules,
and $N^\bu$ be a bounded above complex of left $R$\+modules.
 Then there is a spectral sequence of abelian groups
$$
 E_2^{pq}=\Ext^p_R(N^\bu,\Ext^q_S(M^\bu,L^\bu))\Longrightarrow
 E_\infty^{pq}=\mathrm{gr}^p
 H^{p+q}(\boR\Hom_S(M^\bu\ot^\boL_R N^\bu,\>L^\bu)),
$$
with the differentials $d_r^{pq}\:E_r^{pq}\rarrow E_r^{p+r,q-r+1}$, \
$r\ge2$, converging to the Ext groups
$H^{p+q}(\boR\Hom_R(N^\bu,\,\boR\Hom_S(M^\bu,L^\bu)))=
H^{p+q}(\boR\Hom_S(M^\bu\ot^\boL_R N^\bu,\>L^\bu))=
\Ext_S^{p+q}(M^\bu\ot_R^\boL N^\bu,\> L^\bu)$.
\end{prop}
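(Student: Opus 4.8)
To prove Proposition~\ref{associativity-spectral-sequences}, the plan is to realize each of the two derived functors of three variables by an explicit (bi)complex built from adapted resolutions, and then to run the spectral sequence of the filtration of that bicomplex by one of its two gradings. These are instances of the associativity (``hyper-Tor'' and ``hyper-Ext'') Grothendieck spectral sequences, and the boundedness hypotheses are there precisely to make the construction go through and to guarantee convergence.

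For part~(a), I would first choose a quasi-isomorphism $P^\bu\rarrow L^\bu$ with $P^\bu$ a bounded above complex of projective left $S$\+modules, and a quasi-isomorphism $Q^\bu\rarrow N^\bu$ with $Q^\bu$ a bounded above complex of projective right $R$\+modules. Since a bounded above complex of projectives is K-flat, the complex $M^\bu\ot_SP^\bu$ computes $M^\bu\ot_S^\boL L^\bu$, and the total complex $T^\bu$ of the triple complex $Q^\bu\ot_RM^\bu\ot_SP^\bu$ computes $N^\bu\ot_R^\boL M^\bu\ot_S^\boL L^\bu$; in particular this triple derived tensor product is well-defined and $H^{-n}(T^\bu)=\Tor_n^S(N^\bu\ot_R^\boL M^\bu,\>L^\bu)$. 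Next I would view $T^\bu$ as the total complex of the bicomplex whose two gradings are the cohomological degree in $Q^\bu$ and the total cohomological degree in $M^\bu\ot_SP^\bu$, and take the spectral sequence of the filtration by the $Q^\bu$\+degree. Because $P^\bu$, $M^\bu$, and $Q^\bu$ are all bounded above, only finitely many terms of the bicomplex lie in each total degree, so the spectral sequence converges. To identify the $E^1$\+term I would fix the $Q^\bu$\+degree and pass to cohomology along $M^\bu\ot_SP^\bu$: since each $Q^k$ is a flat right $R$\+module, this cohomology is $Q^k\ot_R\Tor_q^S(M^\bu,L^\bu)$; passing then to cohomology along $Q^\bu$, and using that $Q^\bu\rarrow N^\bu$ is a K-flat resolution, gives $E^2_{pq}=\Tor_p^R(N^\bu,\Tor_q^S(M^\bu,L^\bu))$, with a term in bidegree $(p,q)$ contributing to total homological degree $p+q$, that is, to $H^{-p-q}(T^\bu)$, as claimed.

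For part~(b) I would argue dual-analogously, using a K-projective resolution $Q^\bu\rarrow N^\bu$ by a bounded above complex of projective left $R$\+modules and a K-injective resolution $L^\bu\rarrow I^\bu$ by a bounded below complex of injective left $S$\+modules. Then $\Hom_S(M^\bu,I^\bu)$ is a bounded below complex of left $R$\+modules computing $\boR\Hom_S(M^\bu,L^\bu)$, so $\Hom_R(Q^\bu,\Hom_S(M^\bu,I^\bu))$ computes $\boR\Hom_R(N^\bu,\boR\Hom_S(M^\bu,L^\bu))$; the ordinary tensor--hom adjunction rewrites this complex as $\Hom_S(M^\bu\ot_RQ^\bu,I^\bu)$, which computes $\boR\Hom_S(M^\bu\ot_R^\boL N^\bu,\>L^\bu)$ (here $M^\bu\ot_RQ^\bu$ computes $M^\bu\ot_R^\boL N^\bu$ as $Q^\bu$ is K-flat, and $I^\bu$ is K-injective), so the two descriptions of the abutment agree. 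Viewing this complex as the total complex of the bicomplex graded by the degree in $Q^\bu$ and the degree in $\Hom_S(M^\bu,I^\bu)$, I would take the spectral sequence of the filtration by the $Q^\bu$\+grading; it converges since $Q^\bu$ is bounded above and $\Hom_S(M^\bu,I^\bu)$ is bounded below. Projectivity of the terms of $Q^\bu$ lets me pull cohomology along $\Hom_S(M^\bu,I^\bu)$ inside $\Hom_R(Q^k,{-})$, giving $\Hom_R(Q^k,\Ext_S^q(M^\bu,L^\bu))$, and then cohomology along $Q^\bu$ gives $E_2^{pq}=\Ext_R^p(N^\bu,\Ext_S^q(M^\bu,L^\bu))$, with the stated differentials and abutment.

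The part I expect to require the most care is not the homological algebra of the spectral sequences but the setup: one must pick resolutions that simultaneously (i) compute the relevant derived functors --- which forces \emph{bounded above} complexes of projectives (resp.\ \emph{bounded below} complexes of injectives), these being automatically K-projective/K-flat (resp.\ K-injective) --- and (ii) produce bicomplexes in which each total degree meets only finitely many nonzero terms, so that convergence holds; it is exactly the boundedness hypotheses on $L^\bu$, $M^\bu$, $N^\bu$ that make both requirements compatible. Once the resolutions are fixed, the computation of $E^1$ and $E^2$ uses nothing beyond the exactness of $Q^k\ot_R{-}$ and of $\Hom_R(Q^k,{-})$, and the remainder is bookkeeping with the (co)homological grading conventions and with the directions of the differentials.
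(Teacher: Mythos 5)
Your proposal is correct and follows essentially the same route as the paper: resolve $N^\bu$ and $L^\bu$ by suitably bounded complexes of flat/projective (resp.\ injective) modules, form the resulting bicomplex, and take the standard spectral sequence of a double complex, with the boundedness hypotheses ensuring local finiteness and hence convergence. The only differences are cosmetic (projective rather than flat resolutions in part~(a), and an explicit write-up of part~(b), which the paper dismisses as ``similar''), so no further comparison is needed.
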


\begin{proof}
 Let us briefly explain part~(a), which is a straightforward
generalization of~\cite[Exercise~5.6.2]{Wei}.
 Replace the complex $L^\bu$ by a quasi-isomorphic complex of
flat left $S$\+modules $Q^\bu$ and the complex $N^\bu$ by
a quasi-isomorphic complex of flat right $R$\+modules~$P^\bu$
(where both the complexes $P^\bu$ and $Q^\bu$ are bounded above).
 Denote by $D^\bu$ the total complex of the bicomplex of
left $R$\+modules $M^\bu\ot_S Q^\bu$.
 Then one has $\Tor_q^S(M^\bu,L^\bu)=H^{-q}(D^\bu)$.
 Consider the bicomplex $C^{\bu,\bu}=P^\bu\ot_RD^\bu$ with the terms
$C^{-p,-q}=P^{-p}\ot_RD^{-q}$.
 Then $H^{-q}(C^{-p,\bu})\cong P^{-p}\ot_R\Tor_q^S(M^\bu,L^\bu)$
and consequently $H^{-p}(H^{-q}(C^{\bu,\bu}))\cong
\Tor_p^R(N^\bu,\Tor^S_q(M^\bu,L^\bu))$ for every~$p$ and~$q$.

 On the other hand, the total complex $C^\bu=
P^\bu\ot_R M^\bu\ot_S Q^\bu$ of the bicomplex $C^{\bu,\bu}$
represents the object $N^\bu\ot_R^\boL M^\bu\ot_S^\boL L^\bu$
in the derived category of abelian groups.
 Thus the general construction of the spectral sequence of a double
complex (or rather, the appropriate one of two such spectral
sequences~\cite[Definition~5.6.1]{Wei}) applied to the bicomplex
$C^{\bu,\bu}$ provides part~(a).
 Part~(b) is similar.
\end{proof}

 Before proceeding to prove Theorem~\ref{second-matlis}, we formulate
two lemmas, which extend the result of
Lemma~\ref{matlis-hom-tensor-lemma}.

\begin{lem} \label{matlis-tor-1-lemma}
\textup{(a)} If\/ $\Tor^R_1(U,U)=0$, then the left $R$\+module\/
$\Tor^R_0(K^\bu,A)$ is a $u$\+comodule for any left $R$\+module~$A$.
\par
\textup{(b)} If\/ $\Tor^R_1(U,U)=0=\Tor_2^R(U,U)=0$, then
the left $R$\+module\/ $\Tor^R_1(K^\bu,A)$ is a $u$\+comodule for any
left $R$\+module $A$ such that\/ $\Tor^R_0(K^\bu,A)=0$. \par
\textup{(c)} If\/ $\Tor^R_1(U,U)=0$ and\/ $\fd U_R\le1$, then the left
$R$\+module\/ $\Tor^R_1(K^\bu,A)$ is a $u$\+comodule for any left
$R$\+module~$A$.
\end{lem}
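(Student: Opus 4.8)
The plan is to settle part~(a) by direct reference to earlier results, and to treat parts~(b) and~(c) uniformly via the hyper-Tor spectral sequence of Proposition~\ref{associativity-spectral-sequences}(a).

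Part~(a) needs no new argument: by Lemma~\ref{tor-ext-with-K}(c) one has $\Tor^R_0(K^\bu,A)=(U/R)\ot_RA$, and by Lemma~\ref{matlis-hom-tensor-lemma}(b) the left $R$\+module $(U/R)\ot_RA$ is a $u$\+comodule (even a $u$\+divisible one). For parts~(b) and~(c), I would invoke Proposition~\ref{associativity-spectral-sequences}(a) with $S=R$, taking $L^\bu$ to be the left $R$\+module $A$, \ $M^\bu$ to be $K^\bu$ viewed as a bounded complex of $R$\+$R$\+bimodules, and $N^\bu$ to be the $R$\+$R$\+bimodule $U$ (used as a right $R$\+module for the outer tensor factor). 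This produces a spectral sequence of left $R$\+modules
\[
 E^2_{pq}=\Tor^R_p(U,\>\Tor^R_q(K^\bu,A))\ \Longrightarrow\
 H^{-p-q}(U\ot_R^\boL K^\bu\ot_R^\boL A)=\Tor^R_{p+q}(U\ot_R^\boL K^\bu,\>A).
\]
Since a left $R$\+module $M$ is a $u$\+comodule precisely when $U\ot_RM=0=\Tor^R_1(U,M)$, to prove that $\Tor^R_1(K^\bu,A)$ is a $u$\+comodule it suffices to show that the entries $E^2_{0,1}=U\ot_R\Tor^R_1(K^\bu,A)$ and $E^2_{1,1}=\Tor^R_1(U,\>\Tor^R_1(K^\bu,A))$ both vanish.

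The crux is the computation of $U\ot_R^\boL K^\bu$. Tensoring the distinguished triangle~\eqref{main-triangle} on the left over $R$ with $U$ gives a distinguished triangle $U\rarrow U\ot_R^\boL U\rarrow U\ot_R^\boL K^\bu\rarrow U[1]$. Since $u$ is a ring epimorphism, $H^0(U\ot_R^\boL U)=U\ot_RU=U$, the first map of this triangle is an isomorphism on $H^0$, and $H^{-n}(U\ot_R^\boL U)=\Tor^R_n(U,U)$ for $n\ge1$. In case~(b), the extra hypothesis $\Tor^R_1(U,U)=0=\Tor^R_2(U,U)$ and the long exact cohomology sequence of the triangle then show that $U\ot_R^\boL K^\bu$ has zero cohomology in all degrees $\ge-2$; being bounded above, it is concentrated in cohomological degrees $\le-3$, hence so is $U\ot_R^\boL K^\bu\ot_R^\boL A$, and therefore $\Tor^R_j(U\ot_R^\boL K^\bu,A)=0$ for $j\le2$. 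On the other hand, the hypothesis $\Tor^R_0(K^\bu,A)=0$ kills the entire bottom row ($E^2_{p,0}=0$ for all~$p$). Now the first-quadrant shape of the spectral sequence together with the vanishing of this bottom row forces both $E^2_{0,1}$ and $E^2_{1,1}$ to be untouched by differentials (all incoming differentials would issue from the bottom row or from a column of negative index, and all outgoing differentials land in a column of negative index), so they survive unchanged to $E^\infty$. But $E^\infty_{0,1}$ and $E^\infty_{1,1}$ are subquotients of $H^{-1}$ and $H^{-2}$, respectively, of $U\ot_R^\boL K^\bu\ot_R^\boL A$, both of which vanish. Hence $\Tor^R_1(K^\bu,A)$ is a $u$\+comodule, proving~(b).

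In case~(c) the assumption $\fd U_R\le1$ gives $\Tor^R_n(U,U)=0$ for every $n\ge2$, which together with the hypothesis $\Tor^R_1(U,U)=0$ yields $\Tor^R_n(U,U)=0$ for all $n\ge1$; thus $U\rarrow U\ot_R^\boL U$ is a quasi-isomorphism and $U\ot_R^\boL K^\bu=0$, so the abutment of the spectral sequence vanishes identically. Moreover $\fd U_R\le1$ gives $E^2_{pq}=0$ for $p\ge2$, and by Lemma~\ref{tor-ext-with-K}(b) one has $\Tor^R_q(K^\bu,A)=\Tor^R_q(U,A)=0$ for $q\ge2$; so the $E^2$\+page is supported in the square $0\le p,q\le1$, all higher differentials vanish, and $E^2=E^\infty=0$. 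In particular $E^2_{0,1}=0=E^2_{1,1}$, so $\Tor^R_1(K^\bu,A)$ is a $u$\+comodule. I expect the step requiring the most care to be the spectral-sequence bookkeeping in part~(b): one has to determine precisely the vanishing range of $U\ot_R^\boL K^\bu$ (hence of the abutment in low total degree) and check that the first-quadrant shape and the zero bottom row jointly shield $E^2_{0,1}$ and $E^2_{1,1}$ from every differential. A second point needing attention is to set up the bimodule structures in Proposition~\ref{associativity-spectral-sequences}(a) correctly, so that the spectral sequence is one of left $R$\+modules and not merely of abelian groups, since the conclusion is about the $R$\+module structure on $\Tor^R_1(K^\bu,A)$.
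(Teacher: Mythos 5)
Your argument is correct and follows essentially the same route as the paper: the same spectral sequence $E^2_{pq}=\Tor^R_p(U,\Tor^R_q(K^\bu,A))\Rightarrow\Tor^R_{p+q}(U\ot_R^\boL K^\bu,A)$ from Proposition~\ref{associativity-spectral-sequences}(a), with the abutment killed in low total degrees by the vanishing of $\Tor^R_i(U,U)$, and the entries $E^2_{0,1}$, $E^2_{1,1}$ shielded from the only possible differentials $\partial^2_{2,0}$, $\partial^2_{3,0}$ by the vanishing of the bottom row (case~(b)) or of the columns $p\ge2$ (case~(c)). The only cosmetic differences are that you settle part~(a) by citing Lemma~\ref{matlis-hom-tensor-lemma}(b) (legitimate, as that lemma is proved independently earlier) where the paper re-reads it off the same spectral sequence, and that the $R$\+module structure issue you flag is immaterial since only vanishing of the $E^2$\+entries is needed.
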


\begin{proof}
 Following Proposition~\ref{associativity-spectral-sequences}(a),
there is a spectral sequence
$$
 E^2_{pq}=\Tor_p^R(U,\Tor_q^R(K^\bu,A))\Longrightarrow
 E^\infty_{pq}=\mathrm{gr}_p\Tor_{p+q}^R(U\ot_R^\boL K^\bu,\>A).
$$
 Clearly, one has $\Tor_n^R(U\ot_R^\boL K^\bu,\>A)=0$ whenever
$H^{-i}(U\ot_R^\boL K^\bu)=0$ for all $0\le i\le n$.
 Since $U\ot_RU=U$, the latter condition holds whenever
$\Tor^R_i(U,U)=0$ for all $1\le i\le n$.
 Thus $E^\infty_{pq}=0$ whenever $p+q\le1$ in the assumptions of
part~(a), whenever $p+q\le2$ in the assumptions of part~(b),
and for all $p$, $q\in\boZ$ in the assumptions of part~(c).

 The differentials are $\partial^r_{pq}\:E^r_{pq}\rarrow
E^r_{p-r,q+r-1}$, \ $r\ge2$.
 Now all the differentials involving $E^r_{0,0}$ and $E^r_{1,0}$ vanish
for the dimension reasons, so $E^\infty_{0,0}=0=E^\infty_{1,0}$ implies
$E^2_{0,0}=0=E^2_{1,0}$.
 This proves part~(a).
 Furthermore, the only possibly nontrivial differentials involving
$E^r_{0,1}$ and $E^r_{1,1}$ are
$$
 \partial^2_{2,0}\:E^2_{2,0}\lrarrow E^2_{0,1}
 \quad\text{and}\quad
 \partial^2_{3,0}\:E^2_{3,0}\lrarrow E^2_{1,1}.
$$
 When $\Tor_0^R(K^\bu,A)=0$, one has $E^2_{p,0}=0$ for all $p\in\boZ$.
 When $\fd U_R\le1$, one has $E^2_{pq}=0$ for $p\ge2$ and all~$q$.
 In both cases, $E^\infty_{0,1}=0=E^\infty_{1,1}$
implies $E^2_{0,1}=0=E^2_{1,1}$, proving parts~(b) and~(c).
\end{proof}

\begin{lem} \label{matlis-ext-1-lemma}
\textup{(a)} If\/ $\Tor^R_1(U,U)=0$, then the left $R$\+module\/
$\Ext_R^0(K^\bu,B)$ is a $u$\+contramodule for any left $R$\+module~$B$.
\par
\textup{(b)} If\/ $\Tor^R_1(U,U)=0=\Tor_2^R(U,U)=0$, then
the left $R$\+module\/ $\Ext_R^1(K^\bu,B)$ is a $u$\+contramodule for
any left $R$\+module $B$ such that\/ $\Ext_R^0(K^\bu,B)=0$. \par
\textup{(c)} If\/ $\Tor^R_1(U,U)=0$ and\/ $\pd{}_RU\le1$, then the left
$R$\+module\/ $\Ext_R^1(K^\bu,B)$ is a $u$\+contramodule for any left
$R$\+module~$B$.
\end{lem}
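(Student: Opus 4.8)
The plan is to prove Lemma~\ref{matlis-ext-1-lemma} by the same spectral sequence argument as in the proof of Lemma~\ref{matlis-tor-1-lemma}, now invoking Proposition~\ref{associativity-spectral-sequences}(b) in place of part~(a). First I would apply that proposition with $S=R$, taking $M^\bu=K^\bu$ regarded as a complex of $R$\+$R$\+bimodules (it is concentrated in degrees $-1$ and~$0$, hence bounded), $N^\bu=U$ as a left $R$\+module, and $L^\bu=B$. This yields a spectral sequence
$$
 E_2^{pq}=\Ext_R^p(U,\Ext_R^q(K^\bu,B))\Longrightarrow
 E_\infty^{pq}=\mathrm{gr}^p\,\Ext_R^{p+q}(K^\bu\ot_R^\boL U,\>B),
$$
where $\Ext_R^q(K^\bu,B)$ carries precisely the left $R$\+module structure that appears in the statement (induced by the right $R$\+action on $K^\bu$). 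The three parts of the lemma then reduce to the vanishing of specific $E_2$\+terms: part~(a) is the claim $E_2^{0,0}=0=E_2^{1,0}$, and parts~(b) and~(c) are the claim $E_2^{0,1}=0=E_2^{1,1}$.

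The first step is to control the abutment. Applying ${-}\ot_R^\boL U$ to the distinguished triangle~\eqref{main-triangle} gives a triangle $U\rarrow U\ot_R^\boL U\rarrow K^\bu\ot_R^\boL U\rarrow U[1]$; since $u$ is a ring epimorphism the map $U\rarrow U\ot_RU$ is an isomorphism, so one reads off $H^0(K^\bu\ot_R^\boL U)=0$ and $H^{-i}(K^\bu\ot_R^\boL U)=\Tor_i^R(U,U)$ for $i\ge1$. Hence $\Ext_R^n(K^\bu\ot_R^\boL U,\>B)=0$ as soon as $\Tor_i^R(U,U)=0$ for all $1\le i\le n$, so $E_\infty^{pq}=0$ for $p+q\le1$ under the hypothesis of part~(a) and for $p+q\le2$ under the hypothesis of part~(b). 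Under the hypothesis of part~(c), the condition $\pd{}_RU\le1$ forces $\Tor_i^R(U,U)=0$ for all $i\ge2$ (as $U$ then has flat dimension at most~$1$ as a left $R$\+module), so together with $\Tor_1^R(U,U)=0$ the complex $K^\bu\ot_R^\boL U$ is acyclic and $E_\infty^{pq}=0$ for all $p$,~$q$.

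The second step is the bookkeeping of the differentials $d_r^{pq}\colon E_r^{pq}\rarrow E_r^{p+r,q-r+1}$, \ $r\ge2$. Every differential into or out of $E_r^{0,0}$ and $E_r^{1,0}$ vanishes for dimension reasons (using $\Ext_R^{<0}(K^\bu,B)=0$ from Lemma~\ref{tor-ext-with-K}(a)), so $E_2^{0,0}=E_\infty^{0,0}$ and $E_2^{1,0}=E_\infty^{1,0}$; this proves part~(a). For $E_r^{0,1}$ and $E_r^{1,1}$ the only possibly nonzero differentials are $d_2^{0,1}\colon E_2^{0,1}\rarrow E_2^{2,0}$ and $d_2^{1,1}\colon E_2^{1,1}\rarrow E_2^{3,0}$. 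Under the hypothesis of part~(b), the condition $\Ext_R^0(K^\bu,B)=0$ makes $E_2^{p,0}=\Ext_R^p(U,0)=0$ for every $p$, killing the targets; under the hypothesis of part~(c), the condition $\pd{}_RU\le1$ makes $E_2^{pq}=0$ for every $p\ge2$, again killing the targets. In either case $E_2^{0,1}=E_\infty^{0,1}$ and $E_2^{1,1}=E_\infty^{1,1}$, which by the first step are zero; this proves parts~(b) and~(c).

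I do not expect a real obstacle, as this is a routine transcription of the proof of Lemma~\ref{matlis-tor-1-lemma} under $\Hom/\Tor$ duality, with $\pd{}_RU\le1$ playing the role that $\fd U_R\le1$ played there. The two points deserving care are keeping track of which $R$\+module sides are used (so the $E_2$\+page reads $\Ext_R^p(U,\Ext_R^q(K^\bu,B))$ and the abutment reads $\Ext_R^{p+q}(K^\bu\ot_R^\boL U,\>B)$) and verifying the one\+sided boundedness hypotheses of Proposition~\ref{associativity-spectral-sequences}(b), which is immediate since $K^\bu$ is a two\+term complex and $U$,~$B$ are modules. As a consistency check, part~(a) recovers the $u$\+contramodule half of Lemma~\ref{matlis-hom-tensor-lemma}(a) through the identity $\Ext_R^0(K^\bu,B)=\Hom_R(U/R,B)$ of Lemma~\ref{tor-ext-with-K}(c).
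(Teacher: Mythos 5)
Your proposal is correct and follows exactly the route the paper intends: the paper's proof of this lemma is just the remark that it is dual-analogous to Lemma~\ref{matlis-tor-1-lemma} with the spectral sequence of Proposition~\ref{associativity-spectral-sequences}(b) as the tool, and your write-up carries out precisely that argument, with the abutment $\Ext_R^{p+q}(K^\bu\ot_R^\boL U,\>B)$, the vanishing ranges, and the $d_2$ differentials into $E_2^{2,0}$ and $E_2^{3,0}$ all handled correctly. No gaps.
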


\begin{proof}
 Dual-analogous to Lemma~\ref{matlis-tor-1-lemma}
(and similar to~\cite[Lemma~1.7]{PMat}).
 The spectral sequence of
Proposition~\ref{associativity-spectral-sequences}(b) is
a suitable tool.
\end{proof}

\begin{proof}[Proof of Theorem~\ref{second-matlis}]
 Let $M$ be a $u$\+cospecial left $u$\+comodule.
 By Lemma~\ref{matlis-ext-1-lemma}(b), the left $R$\+module
$\Ext^1_R(K^\bu,M)$ is a $u$\+contramodule.
 Furthermore, the exact sequence~\eqref{main-ext-sequence} for
the $R$\+module $M$ reduces to a four-term sequence
$$
 0\lrarrow\Hom_R(U,M)\lrarrow M\lrarrow\Ext^1_R(K^\bu,M)
 \lrarrow\Ext^1_R(U,M)\lrarrow0.
$$
 Denoting by $E$ the image of the map $M\rarrow\Ext^1_R(K^\bu,M)$, we
have two short exact sequences of left $R$\+modules
$0\rarrow\Hom_R(U,M)\rarrow M\rarrow E\rarrow 0$ and
$0\rarrow E\rarrow\Ext^1_R(K^\bu,M)\rarrow\Ext^1_R(U,M)\rarrow0$.

 The assumptions that $U\ot_RU=U$ and $\Tor^R_i(U,U)=0$ for $i=1$
and~$2$ imply that $U\ot_RD=D$ and $\Tor^R_i(U,D)=0$ for all
left $U$\+modules $D$ and $i=1$,~$2$.
 Hence (by Lemma~\ref{tor-ext-with-K} and the exact
sequence~\eqref{main-tor-sequence} for the $R$\+module $D$)
we have $\Tor^R_i(K^\bu,D)=0$ for $-1\le i\le 2$.
 In particular, this applies to the left $U$\+modules $D=\Hom_R(U,M)$
and $\Ext^1_R(U,M)$.

 Now from the long exact sequences of $\Tor^R_*(K^\bu,{-})$ related to
our two short exact sequences of left $R$\+modules we see that both
the maps $\Tor^R_i(K^\bu,M)\rarrow\Tor^R_i(K^\bu,E)\rarrow
\Tor^R_i(K^\bu,\Ext^1_R(K^\bu,M))$ are isomorphisms for $i=0$ and~$1$.
 In particular, $\Tor^R_0(K^\bu,\Ext^1_R(K^\bu,M))\cong\Tor^R_0(K^\bu,M)
=(U/R)\ot_RM=0$, since $U\ot_RM=0$.
 Hence the left $R$\+module $\Ext^1_R(K^\bu,M)$ is $u$\+special.

 Furthermore, the map $\Tor^R_1(K^\bu,M)\rarrow M$ in the short
exact sequence~\eqref{main-tor-sequence} is an isomorphism, since
$U\ot_RM=0=\Tor_1^R(U,M)$.
 Thus we obtain a natural isomorphism
$\Tor^R_1(K^\bu,\Ext^1_R(K^\bu,M))\cong M$.

 The dual-analogous argument shows that the left $R$\+module
$\Tor_1^R(K^\bu,C)$ is a $u$\+cospecial $u$\+comodule for any
$u$\+special $u$\+contramodule $C$, and provides a natural isomorphism
$\Ext_R^1(K^\bu,\Tor_1^R(K^\bu,C))\cong C$.
 One has to observe that $\Hom_R(U,D)=D$ and $\Ext_R^i(U,D)=0$
for all left $U$\+modules $D$ and $i=1$,~$2$, hence
$\Ext_R^i(K^\bu,D)=0$ for $-1\le i\le 2$, etc.
\end{proof}

 In the rest of this section we discuss how our theory simplifies and
improves with the assumptions that the projective dimension of the left
$R$\+module $U$ and/or the flat dimension of the right $R$\+module $U$
do not exceed~$1$.

\begin{lem} \label{torsionfree-divisible}
\textup{(a)} Assume that\/ $\Tor^R_1(U,U)=0$ and\/ $\fd U_R\le1$.
 Then a left $R$\+module $A$ is $u$\+torsionfree if and only if\/
$\Tor^R_1(K^\bu,A)=0$. \par
\textup{(b)} Assume that\/ $\Tor^R_1(U,U)=0$ and\/ $\pd{}_RU\le1$.
 Then a left $R$\+module $B$ is $u$\+divisible if and only if\/
$\Ext_R^1(K^\bu,B)=0$.
\end{lem}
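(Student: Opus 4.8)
The plan is to prove part~(a) in full and to obtain part~(b) by the dual-analogous argument, replacing $\Tor$ by $\Ext$, the functor $K^\bu\ot_R^\boL{-}$ by $\boR\Hom_R(K^\bu,{-})$, the exact sequence~\eqref{main-tor-sequence} by~\eqref{main-ext-sequence}, and the hypothesis $\fd U_R\le1$ by $\pd{}_RU\le1$. Recall that a left $R$\+module $A$ is $u$\+torsionfree exactly when the map $A\rarrow U\ot_RA$ is injective; so one implication needs no hypotheses: if $\Tor^R_1(K^\bu,A)=0$, then the five-term exact sequence~\eqref{main-tor-sequence} shows that $A\rarrow U\ot_RA$ is injective, hence $A$ is $u$\+torsionfree.

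For the converse, the first step is to show that, under the assumptions of~(a), $\Tor^R_i(K^\bu,D)=0$ for \emph{every} $i\in\boZ$ whenever $D$ is a left $U$\+module. One has $\Tor^R_i(U,D)=0$ for all $i\ge1$: the case $i\ge2$ is immediate from $\fd U_R\le1$, and the case $i=1$ follows from the assumption $\Tor^R_1(U,U)=0$ by tensoring over $R$ with $U$ a short exact sequence $0\rarrow Z\rarrow U^{(I)}\rarrow D\rarrow0$ of left $U$\+modules (using $U\ot_RZ\cong Z$ and $U\ot_RU^{(I)}\cong U^{(I)}$). Since also $U\ot_RD\cong D$, the sequence~\eqref{main-tor-sequence} for $D$ gives $\Tor^R_0(K^\bu,D)=0=\Tor^R_1(K^\bu,D)$; Lemma~\ref{tor-ext-with-K}(a) gives $\Tor^R_i(K^\bu,D)=0$ for $i<0$; and Lemma~\ref{tor-ext-with-K}(b) together with $\fd U_R\le1$ gives $\Tor^R_i(K^\bu,D)=\Tor^R_i(U,D)=0$ for $i\ge2$.

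Now let $A$ be a $u$\+torsionfree left $R$\+module. By the sequence~\eqref{main-tor-sequence}, equivalently Lemma~\ref{tor-ext-with-K}(c), there is a short exact sequence of left $R$\+modules $0\rarrow A\rarrow U\ot_RA\rarrow(U/R)\ot_RA\rarrow0$ whose middle term is a $U$\+module. Applying the triangulated functor $K^\bu\ot_R^\boL{-}$ and passing to the long exact sequence of the groups $\Tor^R_*(K^\bu,{-})$ gives the exact fragment
$$
 \Tor^R_2(K^\bu,\,(U/R)\ot_RA)\lrarrow\Tor^R_1(K^\bu,A)\lrarrow
 \Tor^R_1(K^\bu,\,U\ot_RA).
$$
The right-hand term vanishes by the first step, as $U\ot_RA$ is a $U$\+module, while the left-hand term is isomorphic to $\Tor^R_2(U,\,(U/R)\ot_RA)$ by Lemma~\ref{tor-ext-with-K}(b), hence vanishes since $\fd U_R\le1$. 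Thus $\Tor^R_1(K^\bu,A)=0$, which proves~(a). For~(b) one argues dually: the analogue of the first step yields $\Ext^i_R(K^\bu,D)=0$ for all $i$ and all $U$\+modules $D$ (now using $\Hom_R(U,D)\cong D$, the vanishing $\Ext^1_R(U,D)=0$ which follows from $\Tor^R_1(U,U)=0$ as in the proof of Theorem~\ref{second-matlis}, and $\pd{}_RU\le1$ for the higher $\Ext$), and a $u$\+divisible module $B$ sits in a short exact sequence $0\rarrow\Hom_R(U/R,B)\rarrow\Hom_R(U,B)\rarrow B\rarrow0$ with middle term a $U$\+module, so that $\Ext^1_R(K^\bu,B)$ lies between $\Ext^1_R(K^\bu,\Hom_R(U,B))=0$ and $\Ext^2_R(K^\bu,\Hom_R(U/R,B))=\Ext^2_R(U,\Hom_R(U/R,B))=0$.

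The step I expect to require the most care is this auxiliary vanishing for $U$\+modules $D$: both hypotheses in each part are genuinely used, as $\Tor^R_1(U,U)=0$ handles homological degree~$1$ while $\fd U_R\le1$ (resp.\ $\pd{}_RU\le1$) kills everything in degrees~$\ge2$. Everything else is a routine diagram chase with the five-term sequences~\eqref{main-tor-sequence}, \eqref{main-ext-sequence} and the long exact sequences of $K^\bu\ot_R^\boL{-}$ and $\boR\Hom_R(K^\bu,{-})$.
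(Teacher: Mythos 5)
Your proof is correct, but it follows a different route from the paper's. The paper proves the ``only if'' direction of~(a) in two lines: for $u$\+torsionfree $A$, the sequence~\eqref{main-tor-sequence} makes $\Tor_1^R(U,A)\rarrow\Tor_1^R(K^\bu,A)$ an isomorphism, and then it invokes Lemma~\ref{matlis-tor-1-lemma}(c) (the spectral-sequence lemma) to say that $\Tor_1^R(K^\bu,A)$ is a $u$\+comodule, while $\Tor_1^R(U,A)$ is a $U$\+module; a module that is both must vanish. You instead bypass Lemma~\ref{matlis-tor-1-lemma} entirely: you first establish $\Tor_*^R(K^\bu,D)=0$ for all $U$\+modules $D$ (via the change-of-rings vanishing $\Tor_1^R(U,D)=0$ and the five-term sequence), and then run the long exact sequence of $\Tor_*^R(K^\bu,{-})$ along $0\rarrow A\rarrow U\ot_RA\rarrow(U/R)\ot_RA\rarrow0$, killing the outer terms with Lemma~\ref{tor-ext-with-K}(b) and $\fd U_R\le1$. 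What the paper's argument buys is brevity, since Lemma~\ref{matlis-tor-1-lemma}(c) is already available and is needed elsewhere; what yours buys is self-containedness, avoiding Proposition~\ref{associativity-spectral-sequences} altogether and using only Lemma~\ref{tor-ext-with-K} and the five-term sequences -- in effect the same computation that appears inside the proofs of Theorems~\ref{first-matlis} and~\ref{second-matlis}. One small citation point: for part~(b) you justify $\Ext^1_R(U,D)=0$ for $U$\+modules $D$ ``as in the proof of Theorem~\ref{second-matlis}'', but that proof assumes $\Tor_2^R(U,U)=0$ as well; for degree~$1$ only $\Tor_1^R(U,U)=0$ is needed, which is cleanest to get from the change-of-rings isomorphism $\Ext^1_R(U,D)\cong\Ext^1_U(U\ot_RU,\>D)$ recorded in the proof of Proposition~\ref{u-comodule-category}(b). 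This is a matter of referencing, not a gap.
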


\begin{proof}
 This is similar to~\cite[Lemma~5.1(b)]{PMat}.
 Let us prove part~(a).
 The ``if'' claim follows immediately from the exact
sequence~\eqref{main-tor-sequence}.
 To prove the ``only if'', assume that $A$ is $u$\+torsionfree.
 Then the exact sequence~\eqref{main-tor-sequence} implies
that the left $R$\+module morphism
$\Tor_1^R(U,A)\rarrow\Tor_1^R(K^\bu,A)$ is an isomorphism.
 Since $\Tor_1^R(K^\bu,A)$ is a left $u$\+comodule
by Lemma~\ref{matlis-tor-1-lemma}(c) and $\Tor_1^R(U,A)$ is
a left $U$\+module, they can only be isomorphic when
both of them vanish.
\end{proof}

 It is clear from the definition and
Lemma~\ref{torsionfree-divisible}(a) that, when $\Tor^R_1(U,U)=0$ and
$\fd U_R\le1$, the full subcategory of $u$\+torsionfree $R$\+modules
is closed under extensions, subobjects, direct sums, and products.
 So $u$\+torsionfree $R$\+modules form the torsionfree class of
a certain torsion pair in $R\modl$.
 The related torsion class is the class of all $u$\+torsion
$R$\+modules, that is, all left $R$\+modules $A$ such that $U\ot_RA=0$.

 Similarly, it is clear from the definition and
Lemma~\ref{torsionfree-divisible}(b) that, whenever $\Tor^R_1(U,U)=0$
and $\pd{}_RU\le1$, the full subcategory of $u$\+divisible
$R$\+modules is closed under extensions, quotients, direct sums
and products.
 So $u$\+divisible $R$\+modules form the torsion class of a certain
torsion theory in $R\modl$.
 The related torsionfree class is the class of all $u$\+reduced
$R$\+modules, that is, all left $R$\+modules $B$ such that
$\Hom_R(U,B)=0$.

 It is clear from the definition that the full subcategory of
$u$\+special left $R$\+modules is closed under extensions, quotients,
and direct sums.
 Hence it is the torsion class of a torsion pair in $R\modl$.
 When $\Tor^R_1(U,U)=0$ and $\fd U_R\le1$, the related torsionfree class
can be described as the class of all $u$\+torsionfree $u$\+reduced
left $R$\+modules.

 Similarly, the full subcategory of $u$\+cospecial left $R$\+modules is
closed under extensions, subobjects, direct sums, and products.
 Hence it is the torsionfree class of a torsion pair in $R\modl$.
 When $\Tor^R_1(U,U)=0$ and $\pd{}_RU\le1$, the related torsion class
can be described as the class of all $u$\+divisible $u$\+torsion
left $R$\+modules.

\Section{Abelian Categories of $u$-Comodules and $u$-Contramodules}
\label{u-co-and-contra-categories-secn}

 In this section, as in the previous one, $u\:R\rarrow U$ is
an associative ring epimorphism.
 For most of the results, we will have to assume that $u$~is
a \emph{homological} ring epimorphism, that is, $\Tor^R_i(U,U)=0$
for $i\ge1$.
 In fact, we will mostly have to assume either that the flat dimension
of the right $R$\+module $U$ does not exceed~$1$ (when discussing
left $u$\+comodules), or that the projective dimension of the left
$R$\+module $U$ does not exceed~$1$ (when considering left
$u$\+contramodules).

 Let us denote the full subcategory of left $u$\+comodules 
by $R\modl_{u\co}\subset R\modl$, and the full subcategory of
left $u$\+contramodules by $R\modl_{u\ctra}\subset R\modl$.
 For any left $R$\+module $C$, we set $\Gamma_u(C)=\Tor_1^R(K^\bu,C)$
and $\Delta_u(C)=\Ext^1_R(K^\bu,C)$.
 The natural left $R$\+module morphisms (occurring in the exact
sequences~(\ref{main-tor-sequence}\+-\ref{main-ext-sequence}))
are denoted by $\gamma_{u,C}\:\Gamma_u(C)\rarrow C$ and
$\delta_{u,C}\:C\rarrow\Delta_u(C)$.

\begin{prop} \label{u-comodule-category}
 Assume that\/ $\fd U_R\le 1$.  Then \par
\textup{(a)} the full subcategory $R\modl_{u\co}$ is closed under
the kernels, cokernels, extensions, and direct sums in $R\modl$.
 So $R\modl_{u\co}$ is an abelian category and the embedding functor
$R\modl_{u\co}\rarrow R\modl$ is exact; \par
\textup{(b)} assuming also that\/ $\Tor_1^R(U,U)=0$, the functor\/
$\Gamma_u\:R\modl\rarrow R\modl_{u\co}$ is right adjoint to the fully
faithful embedding functor $R\modl_{u\co}\rarrow R\modl$.
\end{prop}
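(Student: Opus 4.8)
The plan is to treat parts~(a) and~(b) separately, in each case by an elementary diagram chase with the five-term exact sequence~\eqref{main-tor-sequence} and with the long exact sequences of $\Tor^R_*(U,{-})$. For part~(a), closure of $R\modl_{u\co}$ under direct sums, cokernels, and extensions in $R\modl$ is \cite[Proposition~1.1]{GL}; combined with the facts that the zero module is a $u$\+comodule and that a finite direct sum of $u$\+comodules is a $u$\+comodule, this already exhibits $R\modl_{u\co}$ as a full additive subcategory of $R\modl$, so the only new point is closure under kernels. Let $f\:M\rarrow N$ be a morphism of $u$\+comodules and put $I=\operatorname{im} f$, so that $N/I=\coker f$ is a $u$\+comodule. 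First I would check that $I$ is a $u$\+comodule: applying $\Tor^R_*(U,{-})$ to $0\rarrow I\rarrow N\rarrow N/I\rarrow 0$ and using $\Tor^R_2(U,N/I)=0$ (which holds because $\fd U_R\le1$) together with $U\ot_R N=0=\Tor^R_1(U,N)$ and $U\ot_R(N/I)=0=\Tor^R_1(U,N/I)$ (since $N$ and $N/I$ are $u$\+comodules), one reads off $U\ot_R I=0=\Tor^R_1(U,I)$. Then the same computation applied to $0\rarrow\Ker f\rarrow M\rarrow I\rarrow 0$, now using $\Tor^R_2(U,I)=0$ and the comodule properties of $M$ and $I$, gives $U\ot_R\Ker f=0=\Tor^R_1(U,\Ker f)$.

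Since $R\modl_{u\co}$ is then a full additive subcategory of $R\modl$ closed under kernels and cokernels, it inherits kernels and cokernels, computed exactly as in $R\modl$; hence the canonical morphism from coimage to image of any morphism of $R\modl_{u\co}$ is the one computed in $R\modl$, which is an isomorphism, so $R\modl_{u\co}$ is abelian and the inclusion functor, preserving kernels and cokernels, is exact. For part~(b), I would observe first that under the additional hypothesis $\Tor^R_1(U,U)=0$, Lemma~\ref{matlis-tor-1-lemma}(c) guarantees that $\Gamma_u(C)=\Tor^R_1(K^\bu,C)$ is a $u$\+comodule for every left $R$\+module~$C$, so $\Gamma_u$ does take values in $R\modl_{u\co}$, while $\gamma_{u,C}\:\Gamma_u(C)\rarrow C$ is natural in~$C$. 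The crucial observation is that $\gamma_{u,M}$ is an isomorphism whenever $M$ is a $u$\+comodule: in~\eqref{main-tor-sequence} with $A=M$, the vanishing $\Tor^R_1(U,M)=0$ makes $\gamma_{u,M}$ a monomorphism, and the vanishing $U\ot_R M=0$ forces its image to be all of~$M$.

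Granting this, I would verify the universal property of a right adjoint directly. For existence, given $M\in R\modl_{u\co}$ and a morphism $g\:M\rarrow C$ in $R\modl$, the composite $\widetilde g=\Gamma_u(g)\circ\gamma_{u,M}^{-1}\:M\rarrow\Gamma_u(C)$ is a morphism of $u$\+comodules and, by naturality of~$\gamma_u$, satisfies $\gamma_{u,C}\circ\widetilde g=g\circ\gamma_{u,M}\circ\gamma_{u,M}^{-1}=g$. For uniqueness, any morphism $M\rarrow\Gamma_u(C)$ annihilated by $\gamma_{u,C}$ has image contained in $\Ker\gamma_{u,C}$, which by~\eqref{main-tor-sequence} is isomorphic to $\Tor^R_1(U,C)$ and hence is the restriction of a left $U$\+module~$D$; and $\Hom_R(M,u_*D)\cong\Hom_U(U\ot_R M,D)=0$ since $U\ot_R M=0$, so such a morphism vanishes. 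This is precisely the statement that $\Gamma_u$ is right adjoint to the inclusion $R\modl_{u\co}\rarrow R\modl$, with counit~$\gamma_u$.

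The main obstacle is, in part~(a), that $\Ker f$ cannot be shown to be a $u$\+comodule from the sequence $0\rarrow\Ker f\rarrow M\rarrow N$ alone: one must route through the image $I$ first, and it is exactly there that the hypothesis $\fd U_R\le1$ (i.e.\ $\Tor^R_2(U,{-})=0$) is indispensable. In part~(b) the only delicate point is recognizing that the adjunction is formal once $\gamma_{u,M}$ is known to be invertible on $R\modl_{u\co}$ --- no vanishing of $\Ext$ groups between $u$\+comodules and $U$\+modules is required --- together with keeping track of the left $U$\+module structure on $\Ker\gamma_{u,C}$.
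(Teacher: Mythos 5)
Your proposal is correct; it proves everything the paper asserts, but by a partly different route, so a comparison is in order. For part~(a) the paper simply cites \cite[Proposition~1.1]{GL} or \cite[Theorem~1.2(b)]{Pcta}, whereas you prove the one closure property not already quoted from \cite{GL}, namely closure under kernels, by a two-step Tor chase: first the image $I$ of $f$ via $0\rarrow I\rarrow N\rarrow N/I\rarrow0$, then $\Ker f$ via $0\rarrow\Ker f\rarrow M\rarrow I\rarrow0$, using $\fd U_R\le1$ in each step; this is exactly where the hypothesis enters, and your remark that one cannot avoid passing through $I$ is the right point. For part~(b) both arguments begin with Lemma~\ref{matlis-tor-1-lemma}(c) and the sequence~\eqref{main-tor-sequence}, but the mechanics differ: the paper checks the universal property for an arbitrary morphism $M\rarrow A$ by an obstruction computation, locating the obstruction to existence of the lift in $\Ext^1_R(M,\Tor_1^R(U,A))$ and to its uniqueness in $\Hom_R(M,\Tor_1^R(U,A))$, and killing both via the change-of-rings isomorphisms $\Ext^1_R(M,D)\cong\Ext^1_U(U\ot_RM,D)$ and $\Hom_R(M,D)\cong\Hom_U(U\ot_RM,D)$ for $U$-modules $D$, using $U\ot_RM=0=\Tor_1^R(U,M)$. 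You instead note that the counit $\gamma_{u,M}$ is an isomorphism for every $u$-comodule $M$ (immediate from~\eqref{main-tor-sequence}) and produce the lift functorially as $\Gamma_u(g)\circ\gamma_{u,M}^{-1}$, so that only the Hom-vanishing into the $U$-module $\Ker\gamma_{u,C}\cong\Tor_1^R(U,C)$ is needed, for uniqueness. Your version is slightly more economical, dispensing with the $\Ext^1$ change-of-rings step altogether; the paper's obstruction-theoretic formulation has the compensating merit of making explicit which Hom/Ext groups control the lifting problem, which is the computation that is dualized for Proposition~\ref{u-contramodule-category}(b).
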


\begin{proof}
 Part~(a) is a particular case of~\cite[Proposition~1.1]{GL}
or~\cite[Theorem~1.2(b)]{Pcta}.
 To prove part~(b), notice that $\Gamma_u(A)\in
R\modl_{u\co}$ for any $A\in R\modl$ by
Lemma~\ref{matlis-tor-1-lemma}(c).
 We have to show that for every left $R$\+module $A$, every left
$u$\+comodule $M$, and an $R$\+module morphism $M\rarrow A$ there exists
a unique $R$\+module morphism $M\rarrow\Gamma_u(A)$ making the triangle
diagram $M\rarrow\Gamma_u(A)\rarrow A$ commutative.

 Indeed, looking on the exact sequence~\eqref{main-tor-sequence},
the composition $M\rarrow A\rarrow U\ot_RA$ vanishes, since $U\ot_RM=0$.
 Now the obstruction to lifting the morphism $M\rarrow A$ to
a morphism $M\rarrow\Tor_1^R(K^\bu,A)$ lies in the group
$\Ext^1_R(M,\Tor_1^R(U,A))$, and the obstruction to uniqueness
of such a lifting lies in the group $\Hom_R(M,\Tor_1^R(U,A))$.
 
 Generally, for any ring homomorphism $R\rarrow U$, left $R$\+module
$M$, left $U$\+module $D$, right $U$\+module $E$, and an integer
$n\ge0$ such that $\Tor^R_i(U,M)=0$ for $1\le i\le n$, one has
$\Tor^R_n(E,M)\cong\Tor^U_n(E,\>U\ot_RM)$ and
$\Ext_R^n(M,D)\cong\Ext_U^n(U\ot_RM,\>D)$.
 In the situation at hand, $D=\Tor_1^R(U,A)$ is a left $U$\+module,
so any $R$\+module morphism $M\rarrow D$ vanishes, since $U\ot_RM=0$.
 Finally, we have $\Ext^1_R(M,D)=\Ext^1_U(U\ot_RM,\>D)=0$, since
$\Tor_1^R(U,M)=0$ and $U\ot_RM=0$.
\end{proof}

\begin{prop} \label{u-contramodule-category}
 Assume that\/ $\pd{}_RU\le 1$.  Then \par
\textup{(a)} 
 the full subcategory $R\modl_{u\ctra}$ is closed under the kernels,
cokernels, extensions, and products in $R\modl$.
 So $R\modl_{u\ctra}$ is an abelian category and the embedding functor
$R\modl_{u\ctra}\rarrow R\modl$ is exact; \par
\textup{(b)} assuming also that\/ $\Tor_1^R(U,U)=0$, the functor\/
$\Delta_u\:R\modl\rarrow R\modl_{u\ctra}$ is left adjoint to the fully
faithful embedding functor $R\modl_{u\ctra}\rarrow R\modl$.
\end{prop}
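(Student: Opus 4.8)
The plan is to establish Proposition~\ref{u-contramodule-category} as the term-by-term dual of Proposition~\ref{u-comodule-category}, interchanging tensor products with $\Hom$, left adjoints with right adjoints, submodules with quotients, direct sums with products, and the condition $\fd U_R\le1$ with $\pd{}_RU\le1$.

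For part~(a): the subcategory $R\modl_{u\ctra}$ is the Geigle--Lenzing right $\Ext_R^{0,1}$-perpendicular subcategory to the left $R$-module $U$, so by~\cite[Proposition~1.1]{GL} (recalled after the definition of a $u$-contramodule) it is always closed under products, kernels of morphisms, and extensions in $R\modl$. The assumption $\pd{}_RU\le1$ --- equivalently, $\Ext_R^n(U,{-})=0$ for $n\ge2$ --- supplies closure under cokernels as well: given $f\:C'\rarrow C''$ with $C'$, $C''\in R\modl_{u\ctra}$, split $f$ into short exact sequences $0\to K\to C'\to I\to0$ and $0\to I\to C''\to L\to0$ and chase the long exact sequences of $\Ext_R^*(U,{-})$, in which the $\Ext^{\ge2}$-terms drop out; one reads off that $\Hom_R(U,{-})$ and $\Ext^1_R(U,{-})$ vanish on $K$, $I$, and $L$, so that $\coker f=L$ (and $\Ker f=K$, $\mathrm{image}\,f=I$) lie in $R\modl_{u\ctra}$. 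Together with closure under kernels this makes $R\modl_{u\ctra}$ abelian with exact inclusion into $R\modl$; alternatively one may invoke~\cite[Theorem~1.2]{Pcta}.

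For part~(b): now $\Tor_1^R(U,U)=0$ is available as well, so Lemma~\ref{matlis-ext-1-lemma}(c) gives $\Delta_u(B)=\Ext^1_R(K^\bu,B)\in R\modl_{u\ctra}$ for every left $R$-module $B$, and $\delta_{u,B}\:B\rarrow\Delta_u(B)$ is the natural morphism from the exact sequence~\eqref{main-ext-sequence}. I must show that for every $B\in R\modl$, every $C\in R\modl_{u\ctra}$, and every $R$-module morphism $f\:B\rarrow C$ there is a unique $g\:\Delta_u(B)\rarrow C$ with $g\circ\delta_{u,B}=f$. The composite $\Hom_R(U,B)\to B\xrightarrow{f}C$ factors through $\Hom_R(U,C)=0$, so $f$ annihilates $\Ker\delta_{u,B}=\mathrm{image}(\Hom_R(U,B)\to B)$ and hence factors as $B\twoheadrightarrow E\xrightarrow{f'}C$ with $E=\coker(\Hom_R(U,B)\to B)$; splicing~\eqref{main-ext-sequence} exhibits $\Delta_u(B)$ as an extension $0\to E\to\Delta_u(B)\to\Ext^1_R(U,B)\to0$. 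Applying $\Hom_R({-},C)$, the obstruction to extending $f'$ along $E\hookrightarrow\Delta_u(B)$ lies in $\Ext^1_R(\Ext^1_R(U,B),C)$, the obstruction to uniqueness lies in $\Hom_R(\Ext^1_R(U,B),C)$, and since $B\twoheadrightarrow E$ is epic an extension of $f'$ is the same datum as a $g$ with $g\circ\delta_{u,B}=f$. Both obstruction groups vanish: $D:=\Ext^1_R(U,B)$ is a left $U$-module, and --- dually to the change-of-rings remark in the proof of Proposition~\ref{u-comodule-category}(b) --- for a left $U$-module $D$, a left $R$-module $C$, and $n\ge0$ with $\Ext_R^i(U,C)=0$ for $1\le i\le n$, one has $\Ext_R^n(D,C)\cong\Ext_U^n(D,\Hom_R(U,C))$; taking $n=0,1$ and using $\Hom_R(U,C)=0=\Ext^1_R(U,C)$ (valid as $C$ is a $u$-contramodule) gives $\Hom_R(D,C)=0=\Ext^1_R(D,C)$. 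Hence $g$ exists and is unique, so $\Delta_u$ is left adjoint to the inclusion.

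The routine part is carrying out the dualization faithfully. The point deserving attention is that part~(b) goes through with no hypothesis on $B$ and no second Tor-vanishing: the obstruction module $\Ext^1_R(U,B)$ is automatically a $U$-module, so the two vanishing conditions $\Hom_R(U,C)=0=\Ext^1_R(U,C)$ already built into the notion of a $u$-contramodule $C$ suffice. One should also check that the identification $\Delta_u(B)=\Ext^1_R(K^\bu,B)$ is compatible, via the connecting map of the triangle~\eqref{main-triangle}, with the factorization $B\twoheadrightarrow E$ and with the displayed presentation of $\Delta_u(B)$ as an extension of $\Ext^1_R(U,B)$ by $E$.
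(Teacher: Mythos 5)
Your proposal is correct and follows essentially the same route as the paper: part~(a) is the Geigle--Lenzing closure properties plus the $\Ext^{\ge2}$-vanishing from $\pd{}_RU\le1$ (the paper cites \cite[Proposition~1.1]{GL} and \cite[Theorem~1.2(a)]{Pcta}), and part~(b) is exactly the dual-analogue of the proof of Proposition~\ref{u-comodule-category}(b) that the paper invokes, using Lemma~\ref{matlis-ext-1-lemma}(c), the exact sequence~\eqref{main-ext-sequence}, and the change-of-rings identification $\Ext^n_R(D,C)\cong\Ext^n_U(D,\Hom_R(U,C))$ for a $U$\+module $D$ to kill the obstruction groups. Your dualization, including the observation that only the contramodule conditions on $C$ (and no extra hypothesis on $B$) are needed, is carried out correctly.
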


\begin{proof}
 Part~(a) is a particular case of~\cite[Proposition~1.1]{GL}
or~\cite[Theorem~1.2(a)]{Pcta}.
 The proof of part~(b) is based on Lemma~\ref{matlis-ext-1-lemma}(c)
and dual-analogous to the proof of
Proposition~\ref{u-comodule-category}(b);
cf.~\cite[Theorem~3.4]{PMat}.
\end{proof}

\begin{lem} \label{stays-divisible-torsionfree}
 Assume that\/ $\fd U_R\le1$, \ $\pd{}_RU\le1$, and\/ $Tor_1^R(U,U)=0$.
 Then\/ \par
\textup{(a)} for any $u$\+divisible left $R$\+module $B$, the left
$R$\+module\/ $\Gamma_u(B)$ is also $u$\+divisible; \par
\textup{(b)} for any $u$\+torsionfree left $R$\+module $A$, the left
$R$\+module\/ $\Delta_u(A)$ is also $u$\+torsionfree.
\end{lem}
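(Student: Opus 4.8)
The plan is to reduce both statements to the two short exact sequences already used in the proof of Theorem~\ref{first-matlis}, after observing that the present hypotheses force $u$ to be a \emph{homological} ring epimorphism. Indeed, $\Tor^R_1(U,U)=0$ is assumed, while $\Tor^R_i(U,U)=0$ for all $i\ge2$ follows from $\fd U_R\le1$; hence $\Tor^R_i(U,U)=0$ for all $i\ge1$. As is well known for homological epimorphisms, the functor of restriction of scalars $\sD(U\modl)\rarrow\sD(R\modl)$ is then fully faithful, equivalently, the natural morphisms $U\ot_R^\boL D\rarrow D$ and $D\rarrow\boR\Hom_R(U,D)$ are isomorphisms in $\sD(R\modl)$ for every complex of left $U$\+modules~$D$. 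Applying ${-}\ot_R^\boL D$ and $\boR\Hom_R({-},D)$ to the distinguished triangle~\eqref{main-triangle} (with $K^\bu$ regarded as an object of $\sD^\bb(R\bimod R)$), this yields the key vanishing $K^\bu\ot_R^\boL D=0$ and $\boR\Hom_R(K^\bu,D)=0$ in $\sD(R\modl)$ for every left $U$\+module~$D$.

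For part~(a), let $B$ be a $u$\+divisible left $R$\+module. By~\eqref{divisible-module-sequence} there is a short exact sequence of left $R$\+modules $0\rarrow\Hom_R(U/R,B)\rarrow\Hom_R(U,B)\rarrow B\rarrow0$ whose middle term is a left $U$\+module. I would apply the triangulated functor $K^\bu\ot_R^\boL{-}\:\sD(R\modl)\rarrow\sD(R\modl)$ to this sequence; since the middle term becomes zero, one obtains a natural isomorphism $K^\bu\ot_R^\boL B\cong(K^\bu\ot_R^\boL\Hom_R(U/R,B))[1]$ in $\sD(R\modl)$. Passing to cohomology in degree~$-1$ and invoking Lemma~\ref{tor-ext-with-K}(c) gives
$$
 \Gamma_u(B)=\Tor^R_1(K^\bu,B)\cong\Tor^R_0(K^\bu,\Hom_R(U/R,B))=(U/R)\ot_R\Hom_R(U/R,B),
$$
and the right-hand side is a $u$\+divisible module by Lemma~\ref{matlis-hom-tensor-lemma}(b). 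Hence $\Gamma_u(B)$ is $u$\+divisible.

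Part~(b) is the dual argument. For a $u$\+torsionfree left $R$\+module $A$, the short exact sequence~\eqref{torsionfree-module-sequence}, namely $0\rarrow A\rarrow U\ot_RA\rarrow(U/R)\ot_RA\rarrow0$, has a left $U$\+module as its middle term. Applying $\boR\Hom_R(K^\bu,{-})$ and using that the middle term becomes zero, one gets a natural isomorphism $\boR\Hom_R(K^\bu,A)\cong\boR\Hom_R(K^\bu,(U/R)\ot_RA)[-1]$ in $\sD(R\modl)$; passing to cohomology in degree~$1$ and using Lemma~\ref{tor-ext-with-K}(c) yields
$$
 \Delta_u(A)=\Ext^1_R(K^\bu,A)\cong\Ext^0_R(K^\bu,(U/R)\ot_RA)=\Hom_R(U/R,(U/R)\ot_RA),
$$
and the right-hand side is $u$\+torsionfree by Lemma~\ref{matlis-hom-tensor-lemma}(a). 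Hence $\Delta_u(A)$ is $u$\+torsionfree.

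The only step that needs care is the vanishing $K^\bu\ot_R^\boL D=0$ and $\boR\Hom_R(K^\bu,D)=0$ for left $U$\+modules $D$: this is precisely where the homological-epimorphism property is used, and hence where both $\Tor^R_1(U,U)=0$ and $\fd U_R\le1$ enter. Everything after that is bookkeeping with distinguished triangles, long exact sequences, and degree shifts, so I do not anticipate a genuine obstacle. As consistency checks, the resulting formulas agree with Lemmas~\ref{matlis-tor-1-lemma}(c) and~\ref{matlis-ext-1-lemma}(c) (which assert that $\Gamma_u(B)$ is a $u$\+comodule and $\Delta_u(A)$ a $u$\+contramodule) and with the torsion-pair descriptions in Lemma~\ref{torsionfree-divisible}; alternatively, one can extract both statements more directly, without derived categories, from the five-term sequences~\eqref{main-tor-sequence}\+-\eqref{main-ext-sequence}, but the reduction above is the cleanest.
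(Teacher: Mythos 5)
Your proof is correct, but it takes a different route from the paper's. The paper proves part~(a) by checking the criterion of Lemma~\ref{torsionfree-divisible}(b): since $B$ is $u$\+divisible, the five-term sequence~\eqref{main-tor-sequence} reduces to a four-term one, and applying $\Ext^*_R(K^\bu,{-})$ to it (using that $\Ext^*_R(K^\bu,D)=0$ for $U$\+modules $D$) gives $\Ext^1_R(K^\bu,\Gamma_u(B))\cong\Ext^1_R(K^\bu,B)=0$; part~(b) is dual. You instead apply $K^\bu\ot_R^\boL{-}$ (resp.\ $\boR\Hom_R(K^\bu,{-})$) to the divisibility/torsionfreeness sequences \eqref{divisible-module-sequence} and \eqref{torsionfree-module-sequence}, use the degeneration $K^\bu\ot_R^\boL D=0=\boR\Hom_R(K^\bu,D)$ for $U$\+modules $D$ (valid since the hypotheses make $u$ homological), and obtain the explicit identifications $\Gamma_u(B)\cong(U/R)\ot_R\Hom_R(U/R,B)$ and $\Delta_u(A)\cong\Hom_R(U/R,(U/R)\ot_RA)$, closing with Lemma~\ref{matlis-hom-tensor-lemma} instead of Lemma~\ref{torsionfree-divisible}. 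Both arguments hinge on the same principle that $K^\bu$ annihilates $U$\+modules, but yours buys two extras: explicit formulas for $\Gamma_u$ and $\Delta_u$ on divisible/torsionfree modules (matching the first Matlis equivalence functors of Theorem~\ref{first-matlis}), and a cleaner accounting of hypotheses, since each half of your argument uses the dimension bounds only through the homological-epimorphism property plus $\Tor_1^R(U,U)=0$, whereas the paper's part~(a) invokes $\pd{}_RU\le1$ through Lemma~\ref{torsionfree-divisible}(b) (and dually in part~(b)). The one small imprecision is your phrase ``the natural morphisms $U\ot_R^\boL D\rarrow D$ and $D\rarrow\boR\Hom_R(U,D)$'': what you actually apply to the triangle~\eqref{main-triangle} is that the unit $D\rarrow U\ot_R^\boL D$ and the map $\boR\Hom_R(U,D)\rarrow D$ induced by~$u$ are isomorphisms, which is the same full-faithfulness statement up to the usual unit/counit bookkeeping and not a real gap.
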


\begin{proof}
 Let us prove part~(a).
 Following Lemma~\ref{torsionfree-divisible}(b), we have to check
that $\Ext^1_R(K^\bu,\Tor_1^R(K^\bu,B))=0$.
 Since $B$ is $u$\+divisible, we have $\Tor^R_0(K^\bu,B)=
(U/R)\ot_RB=0$, so the five-term exact
sequence~\eqref{main-tor-sequence} reduces to a four-term sequence.
 Furthermore, $\Ext_R^*(K^\bu,D)=0$ for any left $U$\+module~$D$.
 Thus it follows from~\eqref{main-tor-sequence} that
$\Ext^1_R(K^\bu,\Tor_1^R(K^\bu,B))=\Ext^1_R(K^\bu,B)=0$
(cf.\ the proof of Theorem~\ref{second-matlis}).
 The proof of part~(b) is dual-analogous.
\end{proof}

 In the category-theoretic terminology, a right adjoint functor to
the inclusion of a subcategory is called a \emph{coreflector}, and
a subcategory admitting such a functor is said to be
\emph{coreflective}.
 The following result is essentially well-known.

\begin{lem} \label{coreflective-Grothendieck}
 Let\/ $\sK$ be a category with colimits and\/ $\sA\subset\sK$ be
a coreflective full subcategory with the coreflector\/ $\Gamma\:\sK
\rarrow\sA$.
 Assume that there exists a regular cardinal~$\lambda$ such that
the coreflector\/ $\Gamma$ (say, viewed as a functor\/ $\sK\rarrow\sK$)
preserves $\lambda$\+filtered direct limits.
 Then \par
\textup{(a)} if the category\/ $\sK$ is locally presentable, then
the category\/ $\sA$ is locally presentable as well; \par
\textup{(b)} if\/ $\sK$ is a Grothendieck abelian category and
the full subcategory\/ $\sA$ is closed under kernels in\/ $\sK$,
then\/ $\sA$ is a Grothendieck abelian category, too.
 In this case, if $J$ is an injective cogenerator of\/ $\sK$, then\/
$\Gamma(J)$ is an injective cogenerator of\/~$\sA$.
\end{lem}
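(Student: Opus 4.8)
The plan is to build everything out of a single structural fact: the coreflector $\Gamma$, being a right adjoint, preserves all limits that exist in $\sK$, and by hypothesis it also preserves $\lambda$-filtered colimits. For part~(a), I would invoke the general principle that a full subcategory $\sA$ of a locally presentable category $\sK$ which is closed under $\lambda$-filtered colimits and is coreflective is itself locally presentable --- this is essentially the statement that $\sA$ is a colimit-closed, accessible, cocomplete subcategory. Concretely: since $\Gamma\:\sK\rarrow\sK$ preserves $\lambda$-filtered direct limits and $\sA$ is (up to equivalence) the full image of $\Gamma$, the subcategory $\sA$ is closed under $\lambda$-filtered colimits in $\sK$; being coreflective it is also closed under all colimits (a colimit in $\sA$ is computed by applying $\Gamma$ to the colimit in $\sK$), so $\sA$ is cocomplete. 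Then one checks that $\sA$ has a set of $\mu$-presentable generators for a suitable regular cardinal $\mu\geq\lambda$: if $\sK$ has a strong generator $\{G_i\}$ of $\nu$-presentable objects, then $\{\Gamma(G_i)\}$ generates $\sA$, and the $\Gamma(G_i)$ are $\mu$-presentable in $\sA$ for $\mu=\max(\lambda,\nu)$ because $\Hom_{\sA}(\Gamma(G_i),{-})\cong\Hom_{\sK}(G_i,{-})$ and the latter commutes with $\lambda$-filtered colimits in $\sA$ (which are $\lambda$-filtered colimits in $\sK$). This exhibits $\sA$ as locally $\mu$-presentable.

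For part~(b), I would first note that $\sA$, being closed under kernels in $\sK$ by hypothesis and closed under all colimits by the coreflectivity argument above, is in particular closed under cokernels and arbitrary coproducts; together with being closed under finite limits (kernels plus the terminal object, which $\Gamma$ preserves) this makes $\sA$ an abelian category with exact filtered colimits --- the exactness of $\lambda$-filtered colimits is inherited from $\sK$ since they are computed there, and the same for all filtered colimits once one knows $\sA$ is closed under them and under finite limits. Combined with local presentability from part~(a), this gives that $\sA$ is a Grothendieck category (a locally presentable abelian category with exact filtered colimits and a generator). For the injective cogenerator claim, let $J$ be an injective cogenerator of $\sK$. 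Then $\Gamma(J)\in\sA$, and for any $X\in\sA$ one has $\Hom_{\sA}(X,\Gamma(J))\cong\Hom_{\sK}(X,J)$ naturally. Since the inclusion $\sA\rarrow\sK$ is exact (it preserves kernels by hypothesis and cokernels because cokernels in $\sA$ agree with those in $\sK$ --- a cokernel in $\sK$ of a map between objects of $\sA$ lies in $\sA$ as $\sA$ is closed under cokernels), the functor $\Hom_{\sK}({-},J)$ restricted to $\sA$ is exact, so $\Hom_{\sA}({-},\Gamma(J))$ is exact, i.e.\ $\Gamma(J)$ is injective in $\sA$. Cogeneration: if $X\in\sA$ is nonzero, then $X\neq 0$ in $\sK$, so there is a nonzero map $X\rarrow J$ in $\sK$, which corresponds to a nonzero map $X\rarrow\Gamma(J)$ in $\sA$; hence $\Gamma(J)$ is a cogenerator.

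The one point that deserves care, and which I expect to be the main obstacle, is the interaction of ``closed under colimits'' with presentability in part~(a): one must be sure that the generators $\Gamma(G_i)$ are genuinely presentable \emph{in $\sA$}, which requires knowing that $\Hom_{\sA}(\Gamma(G_i),{-})$ preserves $\lambda$-filtered colimits computed in $\sA$ --- and this is exactly where the hypothesis that $\Gamma$ preserves $\lambda$-filtered direct limits is used, since it guarantees that $\lambda$-filtered colimits in $\sA$ coincide with those in $\sK$, where $G_i$ is presentable. Once that identification of colimits is in place, the rest is a routine application of the Gabriel--Ulmer characterization of locally presentable categories and of the standard criterion that a cocomplete abelian category with a generator and exact filtered colimits is Grothendieck; I would cite Adámek--Rosický for the locally presentable part and a standard reference (e.g.\ the Gabriel--Popescu circle of ideas) for the Grothendieck criterion and the passage of injective cogenerators along a coreflection with exact inclusion.
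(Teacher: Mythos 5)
Part~(b) and the injective cogenerator argument are essentially the paper's own and are fine (the inclusion $\sA\rarrow\sK$ is exact, so its right adjoint $\Gamma$ preserves injectives, and $\Hom_\sA(X,\Gamma(J))\cong\Hom_\sK(X,J)\ne0$ for $0\ne X\in\sA$). The genuine gap is in part~(a), and it sits exactly at the point you flagged as delicate, but for a different reason than you anticipated. The isomorphism $\Hom_{\sA}(\Gamma(G_i),{-})\cong\Hom_{\sK}(G_i,{-})$ is false: $\Gamma$ is a \emph{right} adjoint to the inclusion, so the adjunction computes morphisms \emph{into} $\Gamma$ of an object, $\Hom_\sA(X,\Gamma(K))\cong\Hom_\sK(X,K)$ --- which is what you correctly use for $\Gamma(J)$ in part~(b) --- and gives no control whatsoever over morphisms \emph{out of} $\Gamma(G_i)$. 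This false formula is the only support you give both for the assertion that $\{\Gamma(G_i)\}$ generates $\sA$ and for the presentability bound $\mu=\max(\lambda,\nu)$. Generation is not automatic: a nonzero map $G_i\rarrow M$ in $\sK$ with $M\in\sA$ need not produce a nonzero map $\Gamma(G_i)\rarrow M$, since applying $\Gamma$ can kill it (e.g.\ in the paper's setting $\sA=R\modl_{u\co}$ with $u$ an injective ring epimorphism, $\Gamma_u(R)=0$ while nonzero maps $R\rarrow M$ abound). You also misplace the role of the hypothesis on $\Gamma$: that $\lambda$\+filtered colimits in $\sA$ agree with those in $\sK$ follows already from $\sA$ being closed under all colimits, as any coreflective full subcategory is, with no hypothesis needed.

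What the hypothesis actually buys is precisely the missing generation step, and this is how the paper argues. Choose a regular cardinal $\kappa\ge\lambda$ such that $\sK$ is locally $\kappa$\+presentable, and write any $M\in\sA$ as a $\kappa$\+filtered colimit $M=\varinjlim_\alpha G_\alpha$ of $\kappa$\+presentable objects of $\sK$; applying $\Gamma$, which preserves $\kappa$\+filtered colimits, gives $M\cong\Gamma(M)\cong\varinjlim_\alpha\Gamma(G_\alpha)$, so $M$ is a quotient of a coproduct of objects $\Gamma(G)$ with $G$ running over a representative set of $\kappa$\+presentable objects; hence these $\Gamma(G)$ form a (strongly) generating set of $\sA$. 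For presentability of the generators no cardinal bound is needed at all: every object of $\sA$ is an object of the locally presentable category $\sK$, hence $\mu$\+presentable in $\sK$ for some $\mu$, and since the inclusion is full and colimits in $\sA$ are computed in $\sK$, it is $\mu$\+presentable in $\sA$; the Ad\'amek--Rosick\'y criterion (cocomplete with a strong generator consisting of presentable objects) then yields local presentability, and part~(b) follows as you outline. With these corrections your argument coincides with the paper's proof.
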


\begin{proof}
 Part~(a) can be obtained as a particular case
of~\cite[Exercise~2.m]{AR} (which is provable
using~\cite[Theorem~2.72 and Lemma~2.76]{AR}).
 Indeed, the full subcategory $\sA\subset\sK$ is the inverter of
the morphism of functors (adjunction counit) $\Gamma\rarrow\Id_\sK$.

 Alternatively, one observes that $\sA$ is closed under colimits in
$\sK$ (as any coreflective full subcategory).
 Hence the coreflector $\Gamma$ preserves $\lambda$\+filtered direct
limits as a functor $\sK\rarrow\sK$ if and only if it does so as
a functor $\sK\rarrow\sA$.
 Furthermore, it follows that all the objects of $\sA$ are presentable
(``have presentability ranks''), and in view of~\cite[Theorem~1.20]{AR}
it remains to show that the category $\sA$ has a strongly generating
set of objects.
 In part~(b), the full subcategory $\sA\subset\sK$ is closed under
kernels and all colimits; hence $\sA$ is abelian with exact functors
of direct limit.
 Once again, in order to show that the category $\sA$ is Grothendieck,
it remains to check that it has a set of generators (and it suffices
to do so in the context of part~(a)).

 Let $\kappa\ge\lambda$ be a regular cardinal such that the category
$\sK$ is locally $\kappa$\+presentable.
 Denote by $\sK^{<\kappa}$ a set of representatives of the isomorphism
classes of $\kappa$\+presentable objects in~$\sK$, and let $\sG$
denote the set of objects $\Gamma(G)\in\sA$, where $G\in\sK^{<\kappa}$.
 We claim that $\sG$ is a (strongly) generating set of objects in~$\sA$.

 Indeed, let $M\in\sA$ be an object; then we have $M\cong\Gamma(M)$.
 Let $(G_\alpha)$ be a $\kappa$\+filtered diagram of objects in
$\sK^{<\kappa}$ such that $M=\varinjlim_\alpha^\sK G_\alpha$ (where
the upper index denotes the category in which the colimit is taken).
 Then we have $M\cong\Gamma(M)=\varinjlim_\alpha^\sA\Gamma(G_\alpha)$.
 So $M$ is the direct limit of a diagram of objects from $\sG$ in~$\sA$.
 In particular, it follows that $M$ is a quotient of a coproduct of
copies of objects from~$\sG$.

 Finally, in the context of part~(b), the functor $\Gamma$ is right
adjoint to an exact functor, so takes injective objects of $\sK$ to
injective objects of~$\sA$.
 To show that $\Gamma(J)$ is an injective cogenerator of $\sA$ when
$J$ is an injective cogenerator of $\sK$, it suffices to observe that
$\Hom_\sA(M,\Gamma(J))=\Hom_\sK(M,J)\ne0$ when $0\ne M\in\sA$.
\end{proof}

\begin{rem}
 Assuming Vop\v enka's principle, one can drop the assumption of
existence of a cardinal~$\lambda$ in
Lemma~\ref{coreflective-Grothendieck}.
 This is the result of~\cite[Corollary~6.29]{AR}.
\end{rem}

 Now we return to the algebraic setting of this section.

\begin{cor} \label{u-comodule-category-Grothendieck}
 Assume that\/ $\Tor_1^R(U,U)=0$ and\/ $\fd U_R\le1$.
 Then $R\modl_{u\co}$ is a Grothendieck abelian category.
 If $J$ is an injective cogenerator of the abelian category
$R\modl$, then\/ $\Gamma_u(J)$ is an injective cogenerator of
$R\modl_{u\co}$.
\end{cor}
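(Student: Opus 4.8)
The plan is to derive Corollary~\ref{u-comodule-category-Grothendieck} as a direct application of Lemma~\ref{coreflective-Grothendieck}(b) to the setting $\sK=R\modl$ and $\sA=R\modl_{u\co}$. First I would observe that $R\modl$ is a Grothendieck abelian category, hence in particular a category with colimits that is locally presentable, so the ambient hypotheses of Lemma~\ref{coreflective-Grothendieck} are satisfied. By Proposition~\ref{u-comodule-category}(a), the full subcategory $R\modl_{u\co}$ is closed under kernels in $R\modl$ (it is in fact closed under kernels, cokernels, extensions, and direct sums), which supplies the closure-under-kernels hypothesis of part~(b). By Proposition~\ref{u-comodule-category}(b), the functor $\Gamma_u\:R\modl\rarrow R\modl_{u\co}$ is right adjoint to the fully faithful embedding, so $R\modl_{u\co}$ is a coreflective full subcategory with coreflector $\Gamma=\Gamma_u$.

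The one remaining point to verify is the existence of a regular cardinal~$\lambda$ such that the coreflector $\Gamma_u$, viewed as an endofunctor of $R\modl$, preserves $\lambda$\+filtered direct limits. Here I would argue that $\Gamma_u(C)=\Tor_1^R(K^\bu,C)$ is computed from the two-term complex $K^\bu=(R\rarrow U)$ by choosing any projective (or flat) resolution $P^\bu$ of $K^\bu$ as a complex of right $R$\+modules that is bounded above with each $P^{-n}$ a \emph{single} projective right $R$\+module; then $\Tor_1^R(K^\bu,C)=H^{-1}(P^\bu\ot_R C)$. Taking such a resolution with the modules $P^{-n}$ chosen to be, say, free of bounded rank in low degrees — more precisely, choosing $\lambda$ larger than the ranks of $P^0$ and $P^{-1}$ (which can be taken $\le\max(|R|,|U|,\aleph_0)^+$) — the functors $C\mapsto P^{-n}\ot_R C$ preserve $\lambda$\+filtered colimits for $n=0,1$, and since homology and finite limits/colimits of modules commute with filtered colimits, so does $C\mapsto H^{-1}(P^\bu\ot_R C)=\Gamma_u(C)$. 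Thus $\Gamma_u$ preserves $\lambda$\+filtered direct limits for a suitable regular cardinal~$\lambda$.

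With all hypotheses of Lemma~\ref{coreflective-Grothendieck}(b) in place, the conclusion is immediate: $R\modl_{u\co}$ is a Grothendieck abelian category, and moreover if $J$ is an injective cogenerator of $R\modl$, then $\Gamma_u(J)$ is an injective cogenerator of $R\modl_{u\co}$. Since every Grothendieck category — in particular $R\modl$ — has an injective cogenerator, the second sentence of the corollary is not vacuous. The main obstacle I anticipate is the verification that $\Gamma_u$ preserves sufficiently filtered colimits: one must be careful that $\Tor_1^R(K^\bu,{-})$, rather than only $\Tor_0^R$ or $U\ot_R{-}$, is accessible, which is why it is convenient to present $\Gamma_u$ via an explicit small resolution of the compact object $K^\bu$; everything else is a formal bookkeeping of the cited propositions and the general Lemma~\ref{coreflective-Grothendieck}. (Alternatively, one may simply invoke that $K^\bu$ is a perfect complex of right $R$\+modules when $\fd U_R<\infty$, so that $K^\bu\ot_R^\boL{-}$ commutes with all direct limits, giving the required $\lambda$ outright.)
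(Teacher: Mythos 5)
Your proposal is correct and takes essentially the same route as the paper: apply Lemma~\ref{coreflective-Grothendieck}(b) with $\sK=R\modl$, $\sA=R\modl_{u\co}$, using Proposition~\ref{u-comodule-category}(a) for closure under kernels and Proposition~\ref{u-comodule-category}(b) for coreflectivity, and observing that $\Gamma_u=\Tor_1^R(K^\bu,{-})$ preserves filtered direct limits. Two minor remarks: the cardinality bookkeeping is unnecessary, since $P\ot_R{-}$ commutes with \emph{all} colimits for an arbitrary right $R$\+module $P$ and homology commutes with filtered colimits, so $\lambda=\aleph_0$ already works; and your parenthetical appeal to $K^\bu$ being a perfect complex when $\fd U_R<\infty$ is unjustified ($U$ need not be finitely presented over $R$), but that aside is not needed for the argument.
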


\begin{proof}
 This is a particular case of Lemma~\ref{coreflective-Grothendieck}(b).
 Indeed, by Proposition~\ref{u-comodule-category}(a), the full
subcategory $\sA=R\modl_{u\co}$ is closed under kernels in $\sK=R\modl$,
and by Proposition~\ref{u-comodule-category}(b), the full subcategory
$R\modl_{u\co}$ is coreflective in $R\modl$ with the coreflector
$\Gamma_u$ computable as $\Gamma_u=\Tor_1^R(K^\bu,{-})$.
 Viewed as a functor $R\modl\rarrow R\modl$, this Tor functor clearly
preserves direct limits.

 This suffices to prove the corollary.
 But let us mention that the module category $\sK=R\modl$ is locally
finitely presentable.
 So a set of generators of the category $\sA=R\modl_{u\co}$
can be constructed by applying the functor $\Gamma_u$ to
a representative set of isomorphism classes of finitely presentable
left $R$\+modules.
\end{proof}

\begin{lem} \label{u-contramodule-category-loc-pres}
 Assume that\/ $\Tor_1^R(U,U)=0$ and\/ $\pd{}_RU\le1$.
 Then $R\modl_{u\ctra}$ is a locally presentable abelian category with
a projective generator\/ $\Delta_u(R)\in R\modl_{u\ctra}$.
\end{lem}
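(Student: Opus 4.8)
First I would establish that $R\modl_{u\ctra}$ is a locally presentable abelian category. By Proposition~\ref{u-contramodule-category}(a), the full subcategory $R\modl_{u\ctra}\subset R\modl$ is closed under kernels, and by Proposition~\ref{u-contramodule-category}(b) it is reflective, with the reflector $\Delta_u=\Ext^1_R(K^\bu,{-})$ being left adjoint to the exact inclusion. A reflective full subcategory of a locally presentable category is again locally presentable (by the standard reflection theorem, \cite[Theorem~2.48]{AR}); since $R\modl$ is locally finitely presentable, this gives local presentability of $R\modl_{u\ctra}$ with no further hypotheses needed. Being closed under kernels and cokernels in $R\modl$ by Proposition~\ref{u-contramodule-category}(a), it is abelian with an exact inclusion functor; and since it is reflective, it has all colimits (computed by applying $\Delta_u$ to the colimit in $R\modl$).

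Next I would identify the projective generator. The inclusion functor $R\modl_{u\ctra}\rarrow R\modl$ is exact, so its left adjoint $\Delta_u$ sends projective objects of $R\modl$ to projective objects of $R\modl_{u\ctra}$; in particular $\Delta_u(R)=\Ext^1_R(K^\bu,R)$ is projective in $R\modl_{u\ctra}$. To see it is a generator, take any nonzero $u$\+contramodule $C$. For such $C$ we have $C\cong\Delta_u(C)$ (the reflection unit $\delta_{u,C}$ is an isomorphism on objects of the subcategory). Using the adjunction together with the fact that $\Delta_u$ preserves coproducts (being a left adjoint), it suffices to observe that $\Hom_{R\modl_{u\ctra}}(\Delta_u(R),C)\cong\Hom_R(R,C)=C\ne0$; hence the single object $\Delta_u(R)$ detects nonzero morphisms, and, since every object of a cocomplete category equals a quotient of a coproduct of copies of a generating object precisely when that object detects nonzero subobjects this way, $\Delta_u(R)$ is a generator. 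Concretely one writes $C=\coker\bigl(\bigoplus_{g\in G_1}R\rarrow\bigoplus_{g\in G_0}R\bigr)$ in $R\modl$ for suitable index sets, applies the right-exact functor $\Delta_u$, and uses $\Delta_u(C)\cong C$ to present $C$ as a cokernel of a map between coproducts of copies of $\Delta_u(R)$.

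The main obstacle, and the place where the hypothesis $\pd{}_RU\le1$ enters, is precisely the right-exactness argument: I need $\Delta_u(C)\cong C$ for $u$\+contramodules $C$, which is exactly the statement that $\delta_{u,C}$ is an isomorphism for $C\in R\modl_{u\ctra}$. This follows from the five-term exact sequence~\eqref{main-ext-sequence}: if $\Hom_R(U,C)=0=\Ext^1_R(U,C)$ then~\eqref{main-ext-sequence} collapses to an isomorphism $C\xrightarrow{\sim}\Ext^1_R(K^\bu,C)=\Delta_u(C)$. (One also needs $\Delta_u(C)$ to actually lie in $R\modl_{u\ctra}$, which is where $\pd{}_RU\le1$ is used via Lemma~\ref{matlis-ext-1-lemma}(c), as already invoked in Proposition~\ref{u-contramodule-category}(b).) Everything else is formal: the remaining verifications — that coproducts in $R\modl_{u\ctra}$ are $\Delta_u$ applied to coproducts in $R\modl$, that $\Delta_u$ preserves projectives, that a projective generator plus local presentability plus abelianness is all that is claimed — are routine consequences of the adjunction and of Proposition~\ref{u-contramodule-category}.
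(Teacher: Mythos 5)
Your identification of the projective generator is exactly the paper's: $\Delta_u$ is left adjoint to an exact embedding, hence preserves projectives, and $\Hom_R(\Delta_u(R),C)\cong\Hom_R(R,C)=C\ne0$; your extra remark that every $C\in R\modl_{u\ctra}$ is a cokernel of a map of coproducts of copies of $\Delta_u(R)$ (apply the colimit-preserving $\Delta_u$ to a free presentation and use $\delta_{u,C}$ being an isomorphism) is a correct and even slightly more complete way to conclude generation. The problem is in your first step. The ``reflection theorem'' \cite[Theorem~2.48]{AR} does \emph{not} say that a reflective full subcategory of a locally presentable category is locally presentable: it says that a full subcategory closed under limits \emph{and under $\lambda$-filtered colimits} for some regular cardinal~$\lambda$ is reflective and locally presentable. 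Mere reflectivity gives completeness and cocompleteness, but local presentability additionally requires accessibility, and this is not free; it is not provable in ZFC from reflectivity alone (compare the paper's remark after Lemma~\ref{coreflective-Grothendieck}, where Vop\v enka's principle is invoked precisely to remove such an accessibility hypothesis in the dual situation). So your claim that local presentability follows ``with no further hypotheses needed'' is a genuine gap, and it hides exactly the point where the quantitative hypotheses on $U$ enter.

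The repair is the paper's actual argument (via \cite[Example~4.1(1--2)]{PR} or \cite[Example~1.3(4)]{Pper}): choose a regular cardinal~$\lambda$ such that $U$ is $\lambda$-presentable. Since $\pd{}_RU\le1$, the module $U$ has a projective resolution $0\rarrow P_1\rarrow P_0\rarrow U\rarrow0$ with $P_0$, $P_1$ $\lambda$-presentable, so both functors $\Hom_R(U,{-})$ and $\Ext^1_R(U,{-})$ preserve $\lambda$-filtered direct limits; hence the perpendicular subcategory $R\modl_{u\ctra}$ is closed under $\lambda$-filtered colimits (as well as under products and kernels) in $R\modl$, and only then does \cite[Theorem~2.48]{AR} apply to give local $\lambda$-presentability. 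With that step corrected, the rest of your argument goes through and coincides with the paper's proof.
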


\begin{proof}
 Following~\cite[Example~4.1(1\+2)]{PR}
or~\cite[Example~1.3(4)]{Pper}, if $\lambda$~is a regular
cardinal such that the left $R$\+module $U$ is $\lambda$\+presentable
(i.~e., isomorphic to the cokernel of a morphism of free left
$R$\+modules with less than $\lambda$~generators), then the category
$R\modl_{u\ctra}$ is locally $\lambda$\+presentable.
 Since the functor $\Delta_u$ is left adjoint to an exact (fully
faithful) functor $R\modl_{u\ctra}\rarrow R\modl$, it takes projective
left $R$\+modules to projective $u$\+contramodule left $R$\+modules.
 Finally, one has $\Hom_R(\Delta_u(R),C)=\Hom_R(R,C)=C\ne0$ for any
object $0\ne C\in R\modl_{u\ctra}$.
\end{proof}

\Section{The Endomorphism Ring of the Two-Term Complex $(R\to U)$}

 According to the discussion in~\cite[Section~1.1 in
the introduction]{PR}, \cite[Section~6.3]{PS}, and~\cite[Examples~1.2(4)
and~1.3(4)]{Pper}, under the assumptions of
Lemma~\ref{u-contramodule-category-loc-pres}
the abelian category $\sB=R\modl_{u\ctra}$ with its
natural projective generator $P=\Delta_u(R)$ can be described as
the category of modules over an additive monad $\boT_u$ on
the category of sets.
 For any set $X$, the coproduct $P^{(X)}$ of $X$ copies of the object
$P$ in the category $\sB$ can be computed as $P^{(X)}=\Delta_u(R^{(X)})$,
where $R^{(X)}=R[X]$ is the free $R$\+modules with generators indexed
by~$X$.
 The monad $\boT_u$ assigns to every set $X$ the set
$\Hom_\sB(P,P^{(X)})=\Delta_u(R^{(X)})$.
 In particular, to a one-element set~$*$, the monad $\boT_u$ assigns
the underlying set of the $R$\+module $P=\Delta_u(R)$.
 In fact $P=\boT_u(*)\in\boT_u\modl\cong\sB$ is the free
$\boT_u$\+module with one generator.

 For any additive monad $\boT$ on the category of sets, the set
$\boT(*)$ has a natural associative ring structure.
 This is the ring of endomorphisms of the forgetful functor
$\boT\modl\rarrow\Ab$.
 In particular, the ring $\R=\boT_u(*)$ can be computed as
the opposite ring to the ring of endomorphisms
$$
 \Delta_u(R)=\Ext^1_R(K^\bu,R)=\Hom_{\sD^\bb(R\modl)}(K^\bu,R[1])
 \cong\Hom_{\sD^\bb(R\modl)}(K^\bu,K^\bu).
$$
of the object $K^\bu$ in the derived category of left $R$\+modules.
 Notice that the right action of the ring $R$ by endomorphisms of
the object $K^\bu\in\sD^\bb(R\modl)$ in the derived category induces
a natural ring homomorphism $R\rarrow\R$.

 In the next lemma we discuss the particular case of a commutative
ring~$R$.

\begin{lem} \label{endomorphisms-commutative}
 Let $u\:R\rarrow U$ be an epimorphism of commutative rings such
that $\Tor_1^R(U,U)=0$.
 Then the ring\/ $\R=\Hom_{\sD^\bb(R\modl)}(K^\bu,K^\bu)$ is commutative.
 In particular, if $u$~is injective, then the ring\/
$\R=\Hom_R(U/R,U/R)$ is commutative.
\end{lem}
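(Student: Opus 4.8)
The plan is to identify the ring $\R = \Hom_{\sD^\bb(R\modl)}(K^\bu,K^\bu)$ with a more tractable object and to exploit the commutativity of $R$ and the epimorphism condition on $u$. First I would recall that, because $u$ is a ring epimorphism with $\Tor_1^R(U,U)=0$, the object $U$ is an idempotent object in the derived sense: one has $U \ot_R^\boL U \cong U$ in $\sD^\bb(R\modl)$ (the higher Tor's vanish by the Tor-independence hypothesis together with $U\ot_R U = U$). Consequently the thick subcategory generated by $K^\bu$ and the localization picture attached to the homological epimorphism give a recollement-type decomposition, and $\R$ can be computed as the endomorphism ring of the identity functor on a suitable Verdier quotient. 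Concretely, $K^\bu \ot_R^\boL K^\bu \cong K^\bu$ (this follows by tensoring the triangle \eqref{main-triangle} with $K^\bu$ over $R$ and using $U\ot_R^\boL K^\bu = 0$, which holds since $U\ot_R^\boL U\cong U$ via~$u$), so $K^\bu$ is a commutative (non-unital, or rather "$\ot$-idempotent") algebra object in $\sD^\bb(R\bimod R)$, and $\R$ is its endomorphism ring as such.

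The key commutativity input is then the following standard fact: if $A$ is a commutative ring and $E^\bu$ is an object of $\sD^\bb(A\bimod A)$ whose underlying bimodule structure is symmetric (i.e. comes from a complex of $A$-modules with the left and right actions agreeing), and if moreover $E^\bu$ is "$\ot_A^\boL$-idempotent" in the sense that the multiplication $E^\bu\ot_A^\boL E^\bu \to E^\bu$ is an isomorphism, then $\Hom_{\sD^\bb(A\modl)}(E^\bu,E^\bu)$ is a commutative ring. The point is that for such an idempotent object the natural map $A \to \Hom_{\sD^\bb(A\modl)}(E^\bu,E^\bu)$ is itself a ring epimorphism in the derived category, and an endomorphism ring which receives a ring epimorphism from a commutative ring through which everything factors must be commutative: any two endomorphisms $f,g$ of $E^\bu$ satisfy $f\circ g = $ (multiplication applied to $f\ot g$) $= g\circ f$ by the symmetry of the tensor product together with the commutativity of $A$ and the idempotence isomorphism $E^\bu\ot_A^\boL E^\bu \cong E^\bu$. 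I would apply this with $A = R$ and $E^\bu = K^\bu$, having first checked the symmetry of the bimodule structure on $K^\bu$ (immediate: $R$ and $U$ are commutative and $u$ is a ring homomorphism, so $K^\bu = (R\to U)$ has coinciding left and right $R$-actions) and the idempotence $K^\bu\ot_R^\boL K^\bu\cong K^\bu$ established in the previous paragraph.

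For the final sentence, when $u$ is injective the triangle \eqref{main-triangle} shows $K^\bu \cong (U/R)[0]$ in $\sD^\bb(R\modl)$, so $\R = \Hom_{\sD^\bb(R\modl)}(U/R[0], U/R[0]) = \Hom_R(U/R,U/R)$, and commutativity of this ordinary endomorphism ring follows from the general statement just proved. The main obstacle I anticipate is making the "multiplication map of the idempotent object expresses composition" argument fully rigorous at the level of the derived category: one must produce the associative, commutative multiplication $K^\bu\ot_R^\boL K^\bu \to K^\bu$ compatibly with the given ring structure on $\R$, and verify that under the identification $\R \cong \Hom_{\sD^\bb(R\modl)}(K^\bu,K^\bu)$ composition of morphisms corresponds to the operation $\xi, \eta \mapsto \mu\circ(\xi\ot^\boL\eta)$ up to the idempotence isomorphism. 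Once that bookkeeping is in place, commutativity is forced by the symmetric monoidal structure on $\sD(R\modl)$ over the commutative ring $R$. Alternatively, and perhaps more cleanly, one can present $K^\bu$ by an honest dg-algebra or simplicial-commutative model over $R$ and quote that its derived endomorphisms form a commutative ring; I would mention this as a remark but carry out the elementary $\Tor$/triangle computation in the main line of the proof.
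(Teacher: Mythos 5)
There is a genuine gap at the very first step: from the hypothesis $\Tor_1^R(U,U)=0$ you conclude that ``the higher Tor's vanish'' and hence $U\ot_R^\boL U\cong U$, and from this that $U\ot_R^\boL K^\bu=0$ and $K^\bu\ot_R^\boL K^\bu\cong K^\bu$. But the lemma does \emph{not} assume $u$ is a homological epimorphism, and $\Tor_1^R(U,U)=0$ does not imply $\Tor_i^R(U,U)=0$ for $i\ge2$. Under the stated hypothesis, tensoring the triangle~\eqref{main-triangle} with $U$ only gives vanishing of the cohomology of $U\ot_R^\boL K^\bu$ in the top two degrees; in lower degrees the groups $\Tor_i^R(U,U)$, $i\ge2$, survive, so $K^\bu$ is not $\ot_R^\boL$\+idempotent in general and your whole derived-idempotence setup is unavailable. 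This is exactly the point the paper's proof is designed around: it works with the \emph{truncated} product $\tau_{\ge-1}(-\ot_R^\boL-)$ on the one-object full subcategory spanned by $K^\bu[-1]$, for which $U\ot_RU=U$ and $\Tor_1^R(U,U)=0$ alone yield a unit-object isomorphism $K^\bu[-1]\mathbin{\otimes}K^\bu[-1]\cong K^\bu[-1]$ (in the truncated sense) carrying both $f\otimes\id$ and $\id\otimes f$ to~$f$; the untruncated $\ot_R^\boL$ can be used only when $u$ is homological, as the paper remarks. Your argument would be correct if you added the homological hypothesis, but then it proves a weaker statement than the lemma.

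Apart from this, your strategy is essentially the paper's: exhibit $K^\bu$ (up to shift) as the unit of a monoidal structure and conclude commutativity of its endomorphism ring by the interchange (Eckmann--Hilton) argument. Two smaller remarks on your formulation: the detour through symmetry of the monoidal structure and the claim that $R\rarrow\R$ is a ring epimorphism are unnecessary (and the latter is itself unproven); once one knows the unit isomorphism identifies $f\otimes\id$ and $\id\otimes f$ with $f$, commutativity of the endomorphism monoid of the unit is formal in any monoidal category, with no symmetry needed. Your treatment of the injective case ($K^\bu\cong U/R$ in $\sD^\bb(R\modl)$, so $\R\cong\Hom_R(U/R,U/R)$) is fine and matches the paper, which alternatively phrases it via the isomorphism $\Tor_1^R(U/R,U/R)\cong U/R$.
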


\begin{proof}
 This is a generalization of~\cite[Proposition~3.1]{PMat}.
 Let us prove the equivalent assertion that the ring\/
$\R=\Hom_{\sD^\bb(R\modl)}(K^\bu[-1],K^\bu[-1])$ is commutative
(where $K^\bu[-1]$ is the complex $R\rarrow U$ with the term $R$
placed in the cohomological degree~$0$ and the term $U$ placed
in the cohomological degree~$1$).
 Denote by $\sK$ the full subcategory in $\sD^\bb(R\modl)$ consisting
of the single object $K^\bu[-1]$ (and all the objects isomorphic to it).
 Then the functor of truncated tensor product
$$
 L^\bu\mathbin{\bar\otimes}M^\bu=
 \tau_{\ge-1}(L^\bu\ot_R^\boL M^\bu)
$$
defines a unital tensor (monoidal) category structure on
the category $\sK$ with the unit object $K^\bu[-1]$.
 In other words, there is a natural isomorphism $K^\bu[-1]\mathbin{\bar
\otimes}K^\bu[-1]\cong K^\bu[-1]$ transforming both the endomorphisms
$f\mathbin{\bar\otimes}\id$ and ${\id}\mathbin{\bar\otimes}f$ into
the endomorphism~$f$ for any $f\:K^\bu[-1]\rarrow K^\bu[-1]$.
 The commutativity of endomorphisms follows formally from that
(see the computation in~\cite{PMat}).

 When $u$~is a homological epimorphism, one does not need to truncate
the tensor product, so one can use the functor $\otimes_R^\boL$
instead of~$\bar\otimes$.
 When $u$~is an injective epimorphism, it suffices to consider
the full subcategory spanned by the object $K=U/R$ in $R\modl$ and
the functor $\Tor^R_1({-},{-})$ in the role of the tensor product
operation.
 Then one has to use the natural isomorphism $\Tor^R_1(K,K)\cong K$.
\end{proof}

 The next lemma shows that the second assertion of 
Lemma~\ref{endomorphisms-commutative} also holds for noninjective ring
epimorphisms~$u$ of projective dimension~$\le1$.

\begin{lem}
 Let $u\:R\rarrow U$ be an epimorphism of associative rings such that\/
$\Tor_1^R(U,U)=0$ and\/ $\pd{}_RU\le1$.
 Then the associative ring homomorphism\/
$$
 \Hom_{\sD^\bb(R\modl)}(K^\bu,K^\bu)\lrarrow\Hom_R(U/R,U/R)
$$
produced by applying the degree-zero cohomology functor $H^0\:
\sD^\bb(R\modl)\rarrow R\modl$ to the complex $K^\bu\in\sD^\bb(R\modl)$
is surjective.
 In particular, if the ring $R$ is commutative, then so is the ring\/
$\Hom_R(U/R,U/R)$.
\end{lem}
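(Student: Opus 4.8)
The plan is to reduce the statement to the vanishing of a single $\Ext$ group that comes from the hypothesis $\pd{}_RU\le1$. First I would bring in the canonical truncation morphism $p\:K^\bu\rarrow\tau_{\ge0}K^\bu=U/R$, which induces the identity on $H^0$. Since $H^{-1}(K^\bu)=\ker u$ and $H^0(K^\bu)=U/R$, there is a distinguished triangle
$$
 (\ker u)[1]\lrarrow K^\bu\lrarrow U/R\lrarrow(\ker u)[2]
$$
in $\sD^\bb(R\modl)$ whose middle arrow is~$p$. The possible non-injectivity of~$u$ enters only through this triangle: when $u$~is injective, $\ker u=0$, so $K^\bu\cong U/R$ in the derived category and the statement is trivial.

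Next I would identify the target group. Applying $\Hom_{\sD^\bb(R\modl)}({-},U/R)$ to the triangle and noting that $\Hom_{\sD^\bb(R\modl)}((\ker u)[i],U/R)=\Ext^{-i}_R(\ker u,U/R)=0$ for $i=1$,~$2$ (these $\Ext$ groups vanish in negative degrees), one gets that post-composition with~$p$ induces an isomorphism $\Hom_R(U/R,U/R)=\Hom_{\sD^\bb(R\modl)}(U/R,U/R)\cong\Hom_{\sD^\bb(R\modl)}(K^\bu,U/R)$. Since $H^0(\phi\circ p)=\phi$ for every morphism $\phi\:U/R\rarrow U/R$, the functor $H^0$ furnishes a left --- hence two-sided --- inverse to this map; thus $H^0\:\Hom_{\sD^\bb(R\modl)}(K^\bu,U/R)\rarrow\Hom_R(U/R,U/R)$ is itself an isomorphism. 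Moreover $H^0(f)=H^0(p\circ f)$ for every $f\:K^\bu\rarrow K^\bu$, so the ring homomorphism of the lemma is the composite of post-composition with~$p$, namely $\Hom_{\sD^\bb(R\modl)}(K^\bu,K^\bu)\rarrow\Hom_{\sD^\bb(R\modl)}(K^\bu,U/R)$, followed by the isomorphism $H^0$. Hence it suffices to show that post-composition with~$p$ is surjective.

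For this I would apply $\Hom_{\sD^\bb(R\modl)}(K^\bu,{-})$ to the same triangle: in the resulting long exact sequence, the cokernel of the map $\Hom_{\sD^\bb(R\modl)}(K^\bu,K^\bu)\rarrow\Hom_{\sD^\bb(R\modl)}(K^\bu,U/R)$ embeds into $\Hom_{\sD^\bb(R\modl)}(K^\bu,(\ker u)[2])=\Ext^2_R(K^\bu,\ker u)$. By Lemma~\ref{tor-ext-with-K}(b), this last group is isomorphic to $\Ext^2_R(U,\ker u)$, which vanishes because $\pd{}_RU\le1$. Therefore post-composition with~$p$, and with it the ring homomorphism of the lemma, is surjective.

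Finally, for the last assertion, suppose $R$ --- and hence $U$ --- is commutative. Then $\R=\Hom_{\sD^\bb(R\modl)}(K^\bu,K^\bu)$ is commutative by Lemma~\ref{endomorphisms-commutative}, which is the one place where the hypothesis $\Tor_1^R(U,U)=0$ is used; and $\Hom_R(U/R,U/R)$, being the image of a surjective ring homomorphism out of~$\R$, is then commutative as well. The only genuinely delicate points are writing down the truncation triangle correctly when $u$~is not injective and checking that the lemma's map is literally the composite just described; granted this, the surjectivity is a one-line consequence of $\Ext^2_R(U,\ker u)=0$.
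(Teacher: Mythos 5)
Your argument is correct and is essentially the paper's own proof: you use the same truncation triangle $(\ker u)[1]\rarrow K^\bu\rarrow U/R\rarrow(\ker u)[2]$, reduce surjectivity to the vanishing of $\Ext^2_R(K^\bu,\ker u)\cong\Ext^2_R(U,\ker u)=0$ via Lemma~\ref{tor-ext-with-K}(b) and $\pd{}_RU\le1$, and deduce the commutativity statement from Lemma~\ref{endomorphisms-commutative}. The only differences are cosmetic: you re-derive the identification $\Hom_{\sD^\bb(R\modl)}(K^\bu,U/R)\cong\Hom_R(U/R,U/R)$ from the triangle rather than citing Lemma~\ref{tor-ext-with-K}(c), and you spell out (correctly) that the lemma's map $H^0$ coincides with composition with the truncation morphism followed by that isomorphism.
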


\begin{proof}
 Let $I\subset R$ be the kernel of the map~$u$.
 Then we have a natural distinguished triangle
$$
 I[1]\lrarrow K^\bu\lrarrow U/R\lrarrow I[2]
$$
in $\sD^\bb(R\bimod R)$, and we can also consider it as a distinguished
triangle in $\sD^\bb(R\modl)$.
 Applying the functor $\Hom_{\sD^\bb(R\modl)}(K^\bu,{-}[*])$ to this
triangle, we see that the map $\Hom_{\sD^\bb(R\modl)}(K^\bu,K^\bu)
\rarrow\Hom_{\sD(R\modl)}(K^\bu,U/R)$ is surjective, because
$\Hom_{\sD(R\modl)}(K^\bu,I[2])=\Ext^2_R(K^\bu,I)\cong\Ext^2_R(U,I)=0$
by Lemma~\ref{tor-ext-with-K}(b) and since $\pd{}_RU\le1$.
 Finally, we have $\Hom_{\sD(R\modl)}(K^\bu,U/R)=\Ext^0_R(K^\bu,U/R)
\cong\Hom_R(U/R,U/R)$ by Lemma~\ref{tor-ext-with-K}(c).

 This proves the first assertion of the lemma.
 The second one follows from the first one together with the first
assertion of Lemma~\ref{endomorphisms-commutative}.
\end{proof}

\Section{When is the Class of Torsion Modules Hereditary?}
\label{when-hereditary-secn}

 Notice that every left $u$\+comodule is $u$\+torsion, but the converse
implication does not need to be true.
 The torsion class of all $u$\+torsion left $R$\+modules does \emph{not}
need to be hereditary, i.~e., a submodule of a $u$\+torsion $R$\+module
does not need to be $u$\+torsion.
 In fact, if $\Tor_1^R(U,U)=0$ and $\fd U_R\le 1$, then any one of
the mentioned two properties holds if and only if $U$ is a flat 
right $R$\+module.

\begin{lem} \label{implies-flatness}
 Assume that $\Tor_1^R(U,U)=0$ and $\fd U_R\le1$.
 Then the following conditions are equivalent:
\begin{enumerate}
\item all $u$\+torsion left $R$\+modules are $u$\+comodules;
\item all quotient $R$\+modules of left $u$\+comodules
are $u$\+comodules;
\item all $R$\+submodules of left $u$\+comodules are $u$\+comodules;
\item all $R$\+submodules of $u$\+torsion left $R$\+modules
are $u$\+torsion;
\item all $R$\+submodules of left $u$\+comodules are $u$\+torsion;
\item the right $R$\+module $U$ is flat.
\end{enumerate}
\end{lem}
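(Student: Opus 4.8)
The plan is to prove the cycle of implications around this list of six conditions by exploiting the five-term exact sequence \eqref{main-tor-sequence} together with Lemma~\ref{matlis-tor-1-lemma}. The heart of the matter is that, under the standing assumptions $\Tor_1^R(U,U)=0$ and $\fd U_R\le1$, the functor $\Gamma_u=\Tor_1^R(K^\bu,{-})$ always lands in $R\modl_{u\co}$ (Lemma~\ref{matlis-tor-1-lemma}(c)), and for a $u$\+torsion module $A$ (i.e.\ $U\ot_RA=0$, equivalently $\Tor_0^R(K^\bu,A)=0$) the sequence \eqref{main-tor-sequence} collapses to a short exact sequence $0\to\Tor_1^R(U,A)\to\Gamma_u(A)\to A\to0$. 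Thus a $u$\+torsion module $A$ is a $u$\+comodule precisely when $\Tor_1^R(U,A)=0$, and in general $A$ fails to be a comodule only by that one obstruction group. This observation immediately yields $(1)\Leftrightarrow(6)$: condition~$(6)$ says $\Tor_1^R(U,-)\equiv0$; and conversely, given any left $R$\+module $X$, pick a surjection $F\twoheadrightarrow X$ from a free module and note that $\ker(U\to U\ot_R X)$-type arguments, or more directly the fact that $\Tor_1^R(U,X)$ embeds into $\Tor_1^R(U,X/\text{(torsionfree part)})$, reduce computing $\Tor_1^R(U,-)$ to $u$\+torsion modules — here one uses that the maximal $u$\+torsionfree quotient $X\to U\ot_R X$ has $u$\+torsion kernel, and $U\ot_R(U\ot_R X)=U\ot_R X$ together with $\Tor_1^R(U,U\ot_R X)=0$ (using $\fd U_R\le1$ and the homological hypothesis) shows $\Tor_1^R(U,X)\cong\Tor_1^R(U,\text{kernel})$.

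Next I would close the remaining implications by routine diagram chasing. For $(1)\Rightarrow(2)$: a quotient of a $u$\+comodule is $u$\+torsion (the class of $u$\+torsion modules is closed under quotients since $U\ot_R-$ is right exact), hence a comodule by~$(1)$; this also follows already from \cite[Proposition~1.1]{GL} cited in the text, so one may simply remark that $(2)$ always holds and the content is elsewhere. For $(1)\Rightarrow(4)$: a submodule $A'\subseteq A$ with $A$ $u$\+torsion need not a priori be $u$\+torsion, but from $0\to A'\to A\to A/A'\to0$ and right-exactness we get $U\ot_R A'\to U\ot_R A=0$ only gives surjectivity the wrong way; instead use the long exact sequence, which gives $\Tor_1^R(U,A/A')\to U\ot_R A'\to U\ot_R A=0$, so $U\ot_R A'$ is a quotient of $\Tor_1^R(U,A/A')$, which vanishes by~$(6)$ (equivalent to~$(1)$). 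Then $(4)\Rightarrow(5)$ and $(3)\Rightarrow(5)$ are trivial since $u$\+comodules are $u$\+torsion; $(2)\Rightarrow(1)$ because every $u$\+torsion module is a quotient of a free module modulo nothing — wait, rather: it is not obviously a quotient of a comodule, so here one argues $(2)$ with the fact that for $u$\+torsion $A$ the module $\Gamma_u(A)$ is a $u$\+comodule surjecting onto $A$ via $\gamma_{u,A}$ (surjective because $\Tor_0^R(K^\bu,A)=0$), so $A$ is a quotient of a comodule, hence a comodule by~$(2)$. Similarly $(3)\Rightarrow(6)$: apply $(3)$ to the inclusion $\Tor_1^R(U,A)\hookrightarrow\Gamma_u(A)$ for $A$ $u$\+torsion — the subobject $\Tor_1^R(U,A)$ is then a $u$\+comodule, but it is also a $U$\+module, and a module that is simultaneously a $u$\+comodule and a $U$\+module must vanish (as in the proof of Lemma~\ref{torsionfree-divisible}(a)); varying $A$ over $u$\+torsion modules and invoking the reduction of the first paragraph gives flatness of $U_R$.

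The main obstacle, and the one deserving the most care in the write-up, is the reduction showing that flatness of $U_R$ can be tested on $u$\+torsion modules, i.e.\ the nontrivial direction of $(1)\Rightarrow(6)$ and its variants $(4)\Rightarrow(6)$, $(5)\Rightarrow(6)$. The cleanest route is: given arbitrary $X$, form the short exact sequence $0\to T\to X\to X/T\to0$ where $X/T=\operatorname{im}(X\to U\ot_R X)$ is the maximal $u$\+torsionfree quotient and $T$ is $u$\+torsion; then $\Tor_2^R(U,X/T)\to\Tor_1^R(U,T)\to\Tor_1^R(U,X)\to\Tor_1^R(U,X/T)$, and one shows the two outer terms controlling the difference vanish — $\Tor_2^R(U,X/T)=0$ because $\fd U_R\le1$, and $\Tor_1^R(U,X/T)=0$ because a $u$\+torsionfree module embeds in a $U$\+module $D$ with $u$\+torsionfree, hence $u$\+comodule-free, cokernel, and $\Tor_1^R(U,D)=0=\Tor_2^R(U,\text{coker})$ again by $\fd U_R\le1$ and the homological hypothesis — so $\Tor_1^R(U,X)\cong\Tor_1^R(U,T)$, and the latter vanishes for all $u$\+torsion $T$ under any of $(1)$, $(4)$, $(5)$ by the argument of the previous paragraph. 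Once this reduction is in place, the equivalence is a formal consequence, and I would present the whole lemma as the short cycle $(6)\Rightarrow(1)\Rightarrow(2)\Rightarrow(3)\Rightarrow(6)$ together with $(1)\Rightarrow(4)\Rightarrow(5)\Rightarrow(6)$, each arrow being a one- or two-line diagram chase on \eqref{main-tor-sequence}.
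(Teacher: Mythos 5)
Your overall architecture is close to the paper's and most arrows are sound, but there are three concrete problems. First, the aside that ``one may simply remark that (2) always holds'' by \cite[Proposition~1.1]{GL} is false: that result gives closure of $R\modl_{u\co}$ under cokernels of morphisms \emph{between comodules}, not under arbitrary quotients by arbitrary submodules. If (2) held unconditionally, then by the very lemma you are proving $U$ would always be flat, contradicting the non-flat matrix-ring example of a homological epimorphism of projective dimension~$1$ given immediately after the lemma in Section~\ref{when-hereditary-secn}. (Your primary argument for (1)$\Rightarrow$(2) --- quotients of comodules are $u$\+torsion, hence comodules under~(1) --- is fine.) Second, your final scheme $(6)\Rightarrow(1)\Rightarrow(2)\Rightarrow(3)\Rightarrow(6)$ plus $(1)\Rightarrow(4)\Rightarrow(5)\Rightarrow(6)$ contains the arrow $(2)\Rightarrow(3)$, which nothing in your text establishes; all your statements about (3) go the other way, so as written condition~(3) is shown to imply the rest but not conversely. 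The paper closes this using Proposition~\ref{u-comodule-category}(a): under~(2), for $A'\subseteq M$ with $M$ a comodule, $M/A'$ is a comodule, hence $A'=\Ker(M\rarrow M/A')$ is a kernel of a morphism of comodules and so a comodule; alternatively $(6)\Rightarrow(3)$ is immediate from flatness. One line fixes it, but it must be said.

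Third, the ``main obstacle'' you devote the last paragraph to is self-inflicted: there is no need to test flatness on $u$\+torsion modules. For an \emph{arbitrary} left $R$\+module $A$, the sequence~\eqref{main-tor-sequence} exhibits $\Tor_1^R(U,A)$ as a submodule of $\Gamma_u(A)=\Tor_1^R(K^\bu,A)$, which is a $u$\+comodule by Lemma~\ref{matlis-tor-1-lemma}(c) (in particular $u$\+torsion); so under (5) (or (3), or (4)) the $U$\+module $\Tor_1^R(U,A)$ is $u$\+torsion, and $U\ot_R\Tor_1^R(U,A)\cong\Tor_1^R(U,A)$ forces it to vanish. This is exactly the paper's $(5)\Rightarrow(6)$ and renders your whole reduction superfluous. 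If you do keep the reduction, note that its key sub-claim --- that the kernel $T$ of $X\rarrow U\ot_RX$ is $u$\+torsion --- is asserted without proof; it is true under the standing hypotheses (e.g.\ $U\ot_RT\rarrow U\ot_RX$ is injective because $\Tor_1^R(U,X/T)=0$ by the computation you give, and has zero image because applying $U\ot_R{-}$ to $X\rarrow U\ot_RX$ yields an isomorphism), but the Remarks in Section~\ref{first-additive-equivalence} warn that such statements are not automatic, so an argument is required. Finally, the phrase ``embeds in a $U$\+module with $u$\+torsionfree, hence $u$\+comodule-free, cokernel'' is both unjustified and unnecessary: all you need there is $\Tor_2^R(U,\coker)=0$, which follows from $\fd U_R\le1$ alone.
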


\begin{proof}
 (1)\,$\Longrightarrow$\,(2)
 By the definition, all $u$\+comodules are $u$\+torsion.
 Hence (1)~means that the classes of left $u$\+comodules and $u$\+torsion
left $R$\+modules coincide.
 Since the class of $u$\+torsion $R$\+modules is clearly closed under
quotients, (2)~follows.

 (2)\,$\Longleftrightarrow$\,(3) holds because the class of all left
$u$\+comodules is closed under kernels and cokernels of morphisms (by
Proposition~\ref{u-comodule-category}(a)).

 (3)\,$\Longrightarrow$\,(5) and (4)\,$\Longrightarrow$\,(5) are obvious.
  
 (5)\,$\Longrightarrow$\,(6)
 Let $A$ be a left $R$\+module.
 From the exact sequence~\eqref{main-tor-sequence} we see that
 the left $R$\+module $\Tor_1^R(U,A)$ is a submodule of
the left $R$\+module $\Tor_1^R(K^\bu,A)$.
 By Lemma~\ref{matlis-tor-1-lemma}(c), $\Tor_1^R(K^\bu,A)=\Gamma_u(A)$ is
a left $u$\+comodule.
 Under~(5), it follows that the left $R$\+module $\Tor_1^R(U,A)$ is
$u$\+torsion.
 Being simultaneously a left $U$\+module, it follows that
$\Tor_1^R(U,A)=0$.

 (6)\,$\Longrightarrow$\,(1) and (6)\,$\Longrightarrow$\,(4) are obvious.
\end{proof}

 Examples of noncommutative homological ring epimorphisms of projective
dimension~$1$ (on both sides) that are not flat (on either side)
do exist.
 Let $k$~be a field, $k[x]$~be the polynomial ring in one variable~$x$
with the coefficients in~$k$, and $kx\subset k[x]$ be
the one-dimensional $k$\+vector subspace spanned by~$x$.
 Then the embedding of matrix rings
$R=\left(\begin{smallmatrix} k & k\oplus kx \\ 0 & k \end{smallmatrix}
\right)\rarrow \left(\begin{smallmatrix} k[x] & k[x] \\ k[x] & k[x]
\end{smallmatrix}\right)=U$ is an injective ring epimorphism such that
$\Tor_1^R(U,U)=0$ and $\pd{}_RU=\pd U_R=\fd{}_RU=\fd U_R=1$
(cf.\ Section~\ref{kronecker-quiver-secn}).

 On the other hand, the following theorem holds true for epimorphisms of
commutative rings.

\begin{thm}
 If $u\:R\rarrow U$ is an epimorphism of commutative rings such that
$\Tor_1^R(U,U)=0$ and $\pd{}_RU\le1$, then $U$ is a flat
$R$\+module.
\end{thm}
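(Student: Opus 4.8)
The plan is to reduce the statement to a Tor-vanishing covered by Lemma~\ref{implies-flatness}, and then to push it through the classification of $1$\+tilting classes and of silting modules over commutative rings due to Hrbek~\cite{Hrb} and Angeleri H\"ugel--Hrbek~\cite{AH}. First one disposes of the easy reductions. Since $\pd{}_RU\le1$, one has $\Tor^R_i(U,{-})=0$ for all $i\ge2$, so that together with the hypothesis $\Tor^R_1(U,U)=0$ the map $u$ is in fact a \emph{homological} ring epimorphism, and all of the machinery of Sections~\ref{second-additive-equivalence}--\ref{u-co-and-contra-categories-secn} applies; also $\fd U_R=\fd{}_RU\le1$ because $R$ is commutative, so Lemma~\ref{implies-flatness} is available and it suffices to establish any one of its six equivalent conditions. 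By the standard reduction of a flatness test to finitely generated cyclic modules, it is enough to prove $\Tor^R_1(U,R/J)=0$ for every finitely generated ideal $J\subseteq R$; and here one may already peel off the $u$\+torsionfree part, since in $0\rarrow t(R/J)\rarrow R/J\rarrow (R/J)/t(R/J)\rarrow0$ (with $t$ the $u$\+torsion radical) the quotient $(R/J)/t(R/J)$ is $u$\+torsionfree (as $\fd U_R\le1$) and therefore has $\Tor^R_1(U,{-})=0$ by Lemma~\ref{torsionfree-divisible}(a) and the five-term sequence~\eqref{main-tor-sequence}. So the content is the vanishing of $\Tor^R_1(U,A)$ for the $u$\+torsion modules $A=t(R/J)$; equivalently, it concerns only the finitely generated ideals $J$ with $JU=U$.

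Next one translates the data into tilting theory. Being a homological ring epimorphism of projective dimension $\le1$, the complex $K^\bu=(R\rarrow U)$ is represented in $\sD^\bb(R\modl)$ by a $2$\+term complex of projective $R$\+modules, and (when $u$ is injective, the general case being handled by working with $K^\bu$ directly) $T=U\oplus U/R$ is a $1$\+silting $R$\+module whose silting class coincides, via Lemmas~\ref{tor-ext-with-K} and~\ref{torsionfree-divisible}(b), with the class of $u$\+divisible left $R$\+modules. By~\cite{AH}, a silting module over a commutative ring is always of \emph{finite type}, so this class has the form $\mathcal S^{\perp_1}$ for a set $\mathcal S$ of finitely presented $R$\+modules of projective dimension $\le1$, and by~\cite{Hrb} the surrounding torsion-theoretic data is governed by a Gabriel topology $\mathcal G$ with a basis of finitely generated ideals, with $U$ identified, up to the relevant equivalence, with the ring of quotients $R_\mathcal G$. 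The algebraic engine I would then use is the Auslander--Bridger duality for a finitely presented module $S$ of projective dimension $\le1$: writing $\mathrm{Tr}$ for the transpose, one has natural isomorphisms of functors $\Ext^1_R(S,{-})\cong\mathrm{Tr}(S)\ot_R{-}$ and $\Tor^R_1({-},S)\cong\Hom_R(\mathrm{Tr}(S),{-})$. Since $U$ is itself a $U$\+module it is $u$\+divisible, hence lies in $\mathcal S^{\perp_1}$, so $\mathrm{Tr}(S)\ot_RU\cong\Ext^1_R(S,U)=0$ for every $S\in\mathcal S$, i.e.\ $\mathrm{Tr}(S)$ is a $u$\+torsion module; and any $R$\+module homomorphism from a $u$\+torsion module to a $U$\+module vanishes (its image is a quotient of a $u$\+torsion module, hence $u$\+torsion, and a submodule of a $u$\+torsionfree module, hence $u$\+torsionfree, hence zero), so $\Tor^R_1(U,S)\cong\Hom_R(\mathrm{Tr}(S),U)=0$ for every $S\in\mathcal S$.

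It remains to upgrade this orthogonality to $\mathcal S$ into $\Tor^R_1(U,R/J)=0$ for the finitely generated ideals $J$ with $JU=U$, and this last step is the one I expect to be the main obstacle. The idea is to show, using the explicit shape of $\mathcal G$ and of the members of $\mathcal S$ supplied by~\cite{Hrb} and~\cite{AH}, that for such a $J$ the vanishing of $\Tor^R_1(U,R/J)$ can be read off from that of $\Tor^R_1(U,S)$ for a suitable $S\in\mathcal S$ presenting the same torsion cyclic module --- so that the transpose computation above applies again --- the point being precisely that the ``finite type'' property converts the soft statement that $U$ is a ring of quotients into the hard statement that $U$ is a flat $R$\+module. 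This is also the only place where commutativity of $R$ enters in an essential way: over a noncommutative ring the conclusion genuinely fails, a homological ring epimorphism of projective dimension $1$ on both sides need not be flat on either side, as recalled immediately before the theorem.
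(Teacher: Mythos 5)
Your setup is fine (the reduction to a homological epimorphism with $\fd U_R\le1$, the availability of Lemma~\ref{implies-flatness}, the identification of $U\oplus U/R$ as a $1$\+tilting, resp.\ $1$\+silting, module whose (co)generated class is the class of $u$\+divisible modules, and the finite-type property from~\cite{Hrb,AH}), but the proof has a genuine gap exactly where you say you expect an obstacle, and the partial progress you record before that point is in fact vacuous. Your transpose computation shows $\Tor^R_1(U,S)\cong\Hom_R(\mathrm{Tr}(S),U)=0$ for $S\in\mathcal S$; by character duality this is equivalent to $\Ext^1_R(S,\Hom_\boZ(U,\boQ/\boZ))=0$, which holds automatically because $\Hom_\boZ(U,\boQ/\boZ)$ is a $U$\+module and hence lies in $\mathcal S^{\perp_1}$. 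So this orthogonality carries no information towards flatness (flatness of $U$ amounts to injectivity of the character module, not merely its $u$\+divisibility), and it would hold just as well in the noncommutative Kronecker-quiver example recalled before the theorem, where the conclusion fails. The entire content of the theorem is concentrated in the step you leave open --- passing from the finite-type description of the (co)silting class to $\Tor^R_1(U,A)=0$ for $u$\+torsion $A$, i.e.\ to condition~(4)/(1) of Lemma~\ref{implies-flatness} --- and that step is not a routine unwinding of the ``explicit shape of $\mathcal G$ and of the members of $\mathcal S$''; it is precisely the nontrivial hereditariness theorem of \cite{Hrb} and \cite{AH}.

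The paper closes this gap by staying on the dual side: $C=\Hom_\boZ(U\oplus U/R,\>\boQ/\boZ)$ is a $1$\+cotilting module of cofinite type when $u$ is injective (\cite{AS}, \cite{GT}), and a cosilting module of cofinite type in general (\cite{MS}, \cite[Corollary~3.6]{AH}); its associated torsion pair is ($u$\+torsion, $u$\+torsionfree); and the results \cite[Proposition~3.11]{Hrb}, resp.\ \cite[Lemma~4.2]{AH}, assert that over a commutative ring such a torsion pair of cofinite type is hereditary. Hereditariness is exactly condition~(4) of Lemma~\ref{implies-flatness}, and the implication (4)\,$\Longrightarrow$\,(6) of that lemma (which, note, already encapsulates your reduction via Lemma~\ref{torsionfree-divisible}(a) and the sequence~\eqref{main-tor-sequence}, so the passage through cyclic modules and torsion parts is not needed) yields flatness. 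If you want to salvage your outline, you should either quote these hereditariness results where you currently have ``the idea is to show\dots'', or supply an independent proof of them; as written, the argument stops short of the theorem.
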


\begin{proof}
 The argument is based on some results from the papers~\cite{Hrb,AH}.
 Assume first that $u$~is injective.
 Then $U\oplus U/R$ is a $1$\+tilting
$R$\+module~\cite[Theorem~3.5]{AS}, hence
$C=\Hom_\boZ(U\oplus U/R,\>\boQ/\boZ)$ is a $1$\+cotilting $R$\+module
of cofinite type~\cite[Theorems~15.2 and~15.18]{GT}.
 The $1$\+cotilting class associated with $C$ consists of all
the $R$\+submodules of $U$\+modules; in other words, it is what we call
the class of all $u$\+torsionfree $R$\+modules.
 Hence the torsion class in the $1$\+cotilting torsion pair associated
with $C$ is the class of all $u$\+torsion $R$\+modules.
 According to~\cite[Proposition~3.11]{Hrb}, any $1$\+cotilting torsion
pair of cofinite type in the category of modules over a commutative ring
is hereditary.
 By Lemma~\ref{implies-flatness}\,(4)\,$\Longrightarrow$\,(6), it follows
that $\fd{}_RU=0$.

 In the general case of a (not necessarily injective) homological
epimorphism of commutative rings $u\:R\rarrow U$ with $\pd{}_RU\le1$,
one has to use silting theory instead of tilting theory.
 The $R$\+module $U\oplus U/R$ is $1$\+silting by~\cite[Example~6.5]{MS},
and a $2$\+term projective resolution of the complex $U\oplus K^\bu$ is
the related silting complex.
 Hence $C=\Hom_\boZ(U\oplus U/R,\>\boQ/\boZ)$ is a cosilting $R$\+module
of cofinite type~\cite[Corollary~3.6]{AH}.
 The cosilting class associated with $C$ consists of all
the $u$\+torsionfree $R$\+modules, and the torsion class in
the cosilting torsion pair is the class of all $u$\+torsion
$R$\+modules.
 By~\cite[Lemma~4.2]{AH}, any cosilting torsion pair of cofinite type
in the category of modules over a commutative ring is hereditary.
 Once again, by Lemma~\ref{implies-flatness}\,(4)\,$\Longrightarrow$\,(6)
we can conclude that $U$ is a flat $R$\+module.
\end{proof}

\Section{Triangulated Matlis Equivalence}
\label{triangulated-matlis-secn}

 Let $u\:R\rarrow U$ be a homological epimorphism of associative rings,
that is a ring homomorphism such that the natural map of
$U$\+$U$\+bimodules $U\ot_R U\rarrow U$ is an isomorphism and
$\Tor^R_i(U,U)=0$ for all $i>0$.
  Then, according to~\cite[Theorem~4.4]{GL}, \cite[Theorem~3.7]{Pa}, 
\cite[Lemma in Section~4]{NS}, the restriction of scalars with respect
to~$u$ is a fully faithful functor between the unbounded derived
categories $\sD(U\modl)\rarrow\sD(R\modl)$.
 We denote this functor, acting between the bounded or unbounded
derived categories, by
$$
 u_*\:\sD^\star(U\modl)\lrarrow\sD^\star(R\modl),
$$
where $\star=\bb$, $+$, $-$, or~$\varnothing$ is a derived category
symbol.

  In the case of the unbounded derived categories ($\star=\varnothing$),
the functor~$u_*$ has a left adjoint functor
$\boL u^*\:\sD(R\modl)\rarrow\sD(U\modl)$ and a right adjoint functor
$\boR u^!\:\sD(R\modl)\rarrow\sD(U\modl)$.
 When $U$ is a right $R$\+module of finite flat dimension, the functor
$\boL u^*$ also acts between bounded derived categories,
$$
 \boL u^*\:\sD^\star(R\modl)\lrarrow\sD^\star(U\modl).
$$
 When $U$ is a left $R$\+module of finite projective dimension,
the functor $\boR u^!$ acts between bounded derived categories,
$$
 \boR u^!\:\sD^\star(R\modl)\lrarrow\sD^\star(U\modl).
$$
 Since the triangulated functor $u_*$ is fully faithful, its left and
right adjoints $\boL u^*$ and $\boR u^!$ are Verdier quotient
functors~\cite[Proposition~I.1.3]{GZ}.

\begin{thm} \label{two-triangulated-equivalences}
\textup{(a)} Assume that\/ $\fd U_R\le1$.
 Then the kernel of the functor\/ $\boL u^*\:\sD^\star(R\modl)\rarrow
\sD^\star(U\modl)$ coincides with the full subcategory\/
$\sD^\star_{u\co}(R\modl)\subset\sD^\star(R\modl)$ of all complexes
of left $R$\+modules with $u$\+comodule cohomology modules.
 Hence for every symbol\/ $\star=\bb$, $+$, $-$, or\/~$\varnothing$,
we have a triangulated equivalence
$$
 \sD^\star(R\modl)/u_*\sD^\star(U\modl)
 \,\cong\,\sD^\star_{u\co}(R\modl).
$$
\textup{(b)} Assume that\/ $\pd{}_RU\le1$.
 Then the kernel of the functor\/ $\boR u^!\:\sD^\star(R\modl)\rarrow
\sD^\star(U\modl)$ coincides with the full subcategory\/
$\sD^\star_{u\ctra}(R\modl)\subset\sD^\star(R\modl)$ of all complexes
of left $R$\+modules with $u$\+contramodule cohomology modules.
 Hence for every symbol\/ $\star=\bb$, $+$, $-$, or\/~$\varnothing$,
we have a triangulated equivalence
$$
 \sD^\star(R\modl)/u_*\sD^\star(U\modl)
 \,\cong\,\sD^\star_{u\ctra}(R\modl).
$$
\end{thm}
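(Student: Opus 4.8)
The plan is to prove part~(a) in full and obtain part~(b) by the dual argument, interchanging $\boL u^*=U\ot_R^\boL{-}$ with $\boR u^!=\boR\Hom_R(U,{-})$, a two-term flat resolution of the right $R$\+module $U$ with a two-term projective resolution of the left $R$\+module $U$, the functor $\Tor$ with $\Ext$, and left perpendiculars with right perpendiculars. The proof of~(a) has two stages: first I compute the kernel of $\boL u^*$ in terms of cohomology modules, then I deduce the Verdier quotient equivalence from that computation by a formal argument.

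\emph{The kernel of $\boL u^*$.}
 Since $\fd U_R\le1$, choose a resolution $0\rarrow F_1\rarrow F_0\rarrow U\rarrow0$ of $U$ by flat right $R$\+modules. For any complex of left $R$\+modules $X^\bu$, this short exact sequence of right $R$\+modules yields a distinguished triangle $F_1\ot_RX^\bu\rarrow F_0\ot_RX^\bu\rarrow U\ot_R^\boL X^\bu\rarrow(F_1\ot_RX^\bu)[1]$ (using that $F_i\ot_R^\boL X^\bu\cong F_i\ot_RX^\bu$ for $F_i$ flat), and its cohomology long exact sequence, combined with $H^n(F_i\ot_RX^\bu)=F_i\ot_RH^n(X^\bu)$, breaks into natural short exact sequences
\[
 0\rarrow U\ot_RH^n(X^\bu)\rarrow H^n(U\ot_R^\boL X^\bu)\rarrow
 \Tor_1^R\bigl(U,H^{n+1}(X^\bu)\bigr)\rarrow0,\qquad n\in\boZ.
\]
 Now $X^\bu\in\ker\boL u^*$ if and only if $H^n(U\ot_R^\boL X^\bu)=0$ for all~$n$. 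Reading the displayed sequence one way, this holds whenever every $H^n(X^\bu)$ is a $u$\+comodule; reading it the other way, vanishing of all $H^n(U\ot_R^\boL X^\bu)$ forces $U\ot_RH^n(X^\bu)=0=\Tor_1^R(U,H^n(X^\bu))$ for every~$n$, i.e.\ every $H^n(X^\bu)$ is a $u$\+comodule. Hence $\ker\bigl(\boL u^*\:\sD^\star(R\modl)\rarrow\sD^\star(U\modl)\bigr)=\sD^\star_{u\co}(R\modl)$, which is the first assertion of~(a). (The hypothesis $\fd U_R\le1$ is also what makes $\boL u^*$ restrict to $\sD^\star$ for the bounded symbols~$\star$.)

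\emph{From the kernel to the quotient.}
 Put $\mathsf E=u_*\sD^\star(U\modl)\subseteq\sD^\star(R\modl)$; since $u_*$ is fully faithful with left adjoint $\boL u^*$ on $\sD^\star(R\modl)$, the subcategory $\mathsf E$ is thick and the Verdier quotient $\sD^\star(R\modl)/\mathsf E$ is defined. For each $Z\in\sD^\star(R\modl)$ complete the adjunction unit $Z\rarrow u_*\boL u^*Z$ to a distinguished triangle
\[
 M_Z\rarrow Z\rarrow u_*\boL u^*Z\rarrow M_Z[1],
\]
 so that $u_*\boL u^*Z\in\mathsf E$; applying $\boL u^*$ and using that the counit $\boL u^*u_*\rarrow\Id$ is invertible (because $u_*$ is fully faithful) gives $\boL u^*M_Z=0$, i.e.\ $M_Z\in\ker\boL u^*$. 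Moreover $\Hom_{\sD^\star(R\modl)}(M,u_*D)\cong\Hom_{\sD^\star(U\modl)}(\boL u^*M,D)=0$ for every $M\in\ker\boL u^*$ and every $D$, so $\Hom_{\sD^\star(R\modl)}(\ker\boL u^*,\mathsf E)=0$. These two facts exhibit $\ker\boL u^*$ and $\mathsf E$ as a semiorthogonal decomposition of $\sD^\star(R\modl)$, and the standard argument with roofs then shows that the composite $\sD^\star_{u\co}(R\modl)=\ker\boL u^*\hookrightarrow\sD^\star(R\modl)\rarrow\sD^\star(R\modl)/\mathsf E$ is a triangulated equivalence: essential surjectivity is witnessed by the triangles above, fullness holds because a morphism with cone in $\mathsf E$ and target in $\ker\boL u^*$ has a section, and faithfulness holds because a morphism out of an object of $\ker\boL u^*$ that dies in the quotient factors through an object of $\mathsf E$ and is hence zero. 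This gives the second assertion of~(a). For part~(b) the same scheme applies, now with the gluing triangle $u_*\boR u^!Z\rarrow Z\rarrow C_Z$ obtained from the counit $u_*\boR u^!Z\rarrow Z$ (with $C_Z\in\ker\boR u^!$), and with the relevant orthogonality being $\Hom_{\sD^\star(R\modl)}(\mathsf E,\ker\boR u^!)=0$, coming from $\Hom_{\sD^\star(R\modl)}(u_*D,M)\cong\Hom_{\sD^\star(U\modl)}(D,\boR u^!M)$.

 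The one point that carries real weight is a uniformity issue: the argument must work for all of $\star=\bb$, $+$, $-$, or~$\varnothing$ simultaneously. Both the clean short exact sequences of the first stage and — more to the point — the fact that the gluing triangle $M_Z\rarrow Z\rarrow u_*\boL u^*Z$ (resp.\ $u_*\boR u^!Z\rarrow Z\rarrow C_Z$) stays inside $\sD^\star(R\modl)$ depend on $\fd U_R\le1$ (resp.\ $\pd{}_RU\le1$), since finite homological dimension is precisely what forces $\boL u^*$ (resp.\ $\boR u^!$) to preserve boundedness. Everything else is the formal machinery of adjoint functors and Verdier localization recalled before the statement.
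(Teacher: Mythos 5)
Your proof is correct and follows essentially the same route as the paper: the identification of $\ker(\boL u^*)$ (resp.\ $\ker(\boR u^!)$) via the short exact sequences $0\to U\ot_RH^n\to H^n(U\ot_R^\boL{-})\to\Tor_1^R(U,H^{n+1})\to0$ coming from $\fd U_R\le1$ (resp.\ its Ext analogue for $\pd{}_RU\le1$) is exactly the paper's argument. The only difference is that you spell out in full the standard Bousfield-localization/semiorthogonal-decomposition step identifying $\ker(\boL u^*)$ with the Verdier quotient by $u_*\sD^\star(U\modl)$, which the paper leaves to the general facts about fully faithful $u_*$ and its adjoints cited before the theorem; this is a harmless (and correctly executed) expansion, including the boundedness check for $\star=\bb,+,-$.
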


\begin{proof}
 Part~(a): the functor $\boL u^*$ is constructed as the derived tensor
product $\boL u^*(A^\bu) = U\ot_R^\boL A^\bu$ for any complex
of left  $R$\+modules~$A^\bu$.
 In particular, when $\fd U_R\le1$, we have short exact sequences of
cohomology
$$
 0\lrarrow U\ot_R H^n(A^\bu)\lrarrow H^n(\boL u^*(A^\bu))\lrarrow
 \Tor_1^R(U,H^{n+1}(A^\bu))\lrarrow0
$$
for any complex $A^\bu\in\sD^\star(R\modl)$ and all $n\in\boZ$.
 It follows immediately that $\boL u^*(A^\bu)=0$ if and only if
$H^n(A^\bu)\in R\modl_{u\co}$ for all $n\in\boZ$.

 Part~(b): the functor $\boR u^!$ is constructed as the derived
homomorphisms $\boR u^!(B^\bu)=\boR\Hom_R(U,B^\bu)$ for any
complex of left $R$\+modules~$B^\bu$.
 In particular, when $\pd{}_RU\le1$, we have short exact sequences
of cohomology
$$
 0\rarrow\Ext^1_R(U,H^{n-1}(B^\bu))\lrarrow H^n(\boR u^!(B^\bu))
 \lrarrow\Hom_R(U,H^n(B^\bu))\lrarrow0
$$
for any complex $B^\bu\in\sD^\star(R\modl)$ and all $n\in\boZ$.
 It follows immediately that $\boR u^!(B^\bu)=0$ if and only if
$H^n(B^\bu)\in R\modl_{u\ctra}$ for all $n\in\boZ$.
\end{proof}

\begin{cor} \label{one-triangulated-equivalence}
 Assume that\/ $\fd U_R\le1$ and\/ $\pd{}_RU\le1$.
 Then for every symbol\/ $\star=\bb$, $+$, $-$, or\/~$\varnothing$
there is a triangulated equivalence
$$
 \sD^\star_{u\co}(R\modl)\,\cong\,\sD^\star_{u\ctra}(R\modl)
$$
provided by the mutually inverse functors\/ $\boR\Hom_R(K^\bu[-1],{-})\:
\sD^\star_{u\co}(R\modl)\rarrow\sD^\star_{u\ctra}(R\modl)$
and $K^\bu[-1]\ot_R^\boL{-}\:\sD^\star_{u\ctra}(R\modl)\rarrow
\sD^\star_{u\co}(R\modl)$.
\end{cor}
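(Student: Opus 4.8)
The plan is to derive the corollary from Theorem~\ref{two-triangulated-equivalences}, whose parts~(a) and~(b) present \emph{one and the same} Verdier quotient $\sD^\star(R\modl)/u_*\sD^\star(U\modl)$ as equivalent both to $\sD^\star_{u\co}(R\modl)$ and to $\sD^\star_{u\ctra}(R\modl)$; composing these two equivalences already produces an abstract triangulated equivalence $\sD^\star_{u\co}(R\modl)\cong\sD^\star_{u\ctra}(R\modl)$, and the remaining task is to realize it by the two functors in the statement. Throughout, one uses the standing assumption of this section that $u$ is a homological epimorphism, so that $U\ot_R^\boL U\cong U$ with both canonical maps $U\rarrow U\ot_R^\boL U$ isomorphisms, together with the rotation $K^\bu[-1]\rarrow R\rarrow U\rarrow K^\bu$ of the triangle~\eqref{main-triangle} in $\sD^\bb(R\bimod R)$. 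Since $\fd U_R\le1$, the complex $K^\bu$ has finite flat dimension as a right $R$\+module, and since $\pd{}_RU\le1$, it has finite projective dimension as a left $R$\+module (indeed $K^\bu$ is quasi-isomorphic to a two-term complex of projective left $R$\+modules); hence both $K^\bu[-1]\ot_R^\boL{-}$ and $\boR\Hom_R(K^\bu[-1],{-})$ act between the categories $\sD^\star$ for every symbol $\star=\bb$, $+$, $-$, or~$\varnothing$.

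The first step is the pair of vanishings $U\ot_R^\boL K^\bu[-1]=0$ (the tensor product taken over the left $R$\+module structure on the terms of $K^\bu[-1]$) and $K^\bu[-1]\ot_R^\boL U=0$ (over the right $R$\+module structure): in each case, applying $U\ot_R^\boL{-}$, resp.\ ${-}\ot_R^\boL U$, to the triangle above turns the arrow $R\rarrow U$ into one of the canonical maps $U\rarrow U\ot_R^\boL U$, which are isomorphisms, so the fibre is zero. With this in hand, the two functors land where claimed. For $A^\bu\in\sD^\star(R\modl)$, associativity of the derived tensor product gives $\boL u^*(K^\bu[-1]\ot_R^\boL A^\bu)=(U\ot_R^\boL K^\bu[-1])\ot_R^\boL A^\bu=0$, so $K^\bu[-1]\ot_R^\boL A^\bu\in\sD^\star_{u\co}(R\modl)$ by Theorem~\ref{two-triangulated-equivalences}(a). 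For $B^\bu\in\sD^\star(R\modl)$, the tensor--hom adjunction gives $\boR u^!\boR\Hom_R(K^\bu[-1],B^\bu)=\boR\Hom_R(U,\boR\Hom_R(K^\bu[-1],B^\bu))\cong\boR\Hom_R(K^\bu[-1]\ot_R^\boL U,\>B^\bu)=0$, so $\boR\Hom_R(K^\bu[-1],B^\bu)\in\sD^\star_{u\ctra}(R\modl)$ by Theorem~\ref{two-triangulated-equivalences}(b). The same vanishings also give $\boR\Hom_R(K^\bu[-1],u_*D)\cong\boR\Hom_U(U\ot_R^\boL K^\bu[-1],D)=0$ and $K^\bu[-1]\ot_R^\boL u_*D\cong(K^\bu[-1]\ot_R^\boL U)\ot_U^\boL D=0$ for every complex of left $U$\+modules~$D$.

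For the mutual-inverse assertion I would use only the morphism $K^\bu[-1]\rarrow R$ above: tensoring it over $R$ with a complex $X^\bu$ yields a natural morphism $\varepsilon_{X^\bu}\:K^\bu[-1]\ot_R^\boL X^\bu\rarrow X^\bu$ whose cone is $U\ot_R^\boL X^\bu$, and applying $\boR\Hom_R({-},X^\bu)$ yields a natural morphism $\alpha_{X^\bu}\:X^\bu\rarrow\boR\Hom_R(K^\bu[-1],X^\bu)$ fitting into a distinguished triangle with third vertex $\boR\Hom_R(U,X^\bu)$. Let $A^\bu\in\sD^\star_{u\ctra}(R\modl)$; then $\boR\Hom_R(U,A^\bu)=\boR u^!(A^\bu)=0$, so $\alpha_{A^\bu}$ is an isomorphism, while applying $\boR\Hom_R(K^\bu[-1],{-})$ to the triangle of $\varepsilon_{A^\bu}$ and using $\boR\Hom_R(K^\bu[-1],U\ot_R^\boL A^\bu)=0$ (as $U\ot_R^\boL A^\bu$ is a complex of $U$\+modules) shows that $\boR\Hom_R(K^\bu[-1],\varepsilon_{A^\bu})$ is an isomorphism too; composing the two, one gets a natural isomorphism $\boR\Hom_R(K^\bu[-1],K^\bu[-1]\ot_R^\boL A^\bu)\cong A^\bu$. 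Symmetrically, for $B^\bu\in\sD^\star_{u\co}(R\modl)$ one has $U\ot_R^\boL B^\bu=\boL u^*(B^\bu)=0$, so $\varepsilon_{B^\bu}$ is an isomorphism, and applying $K^\bu[-1]\ot_R^\boL{-}$ to the triangle of $\alpha_{B^\bu}$ and using $K^\bu[-1]\ot_R^\boL\boR\Hom_R(U,B^\bu)=0$ shows that $K^\bu[-1]\ot_R^\boL\alpha_{B^\bu}$ is an isomorphism, giving a natural isomorphism $K^\bu[-1]\ot_R^\boL\boR\Hom_R(K^\bu[-1],B^\bu)\cong B^\bu$. I expect the only point requiring real care to be the bookkeeping of the two $R$\+module structures carried by $K^\bu[-1]$: one must keep track, in each line above, of which $R$\+action exhibits the objects as complexes of left $R$\+modules, and check that the associativity and adjunction isomorphisms used respect these actions. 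Beyond Theorem~\ref{two-triangulated-equivalences} and the two vanishings, no essentially new ingredient seems to be needed.
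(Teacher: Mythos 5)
Your proposal is correct and is in substance the paper's own argument with the details written out: the paper identifies $K^\bu[-1]\ot_R^\boL{-}$ and $\boR\Hom_R(K^\bu[-1],{-})$ as the right, resp.\ left, adjoints to the embeddings $\sD^\star_{u\co}(R\modl)\rarrow\sD^\star(R\modl)$ and $\sD^\star_{u\ctra}(R\modl)\rarrow\sD^\star(R\modl)$ (delegating that computation to~\cite[Proposition~4.4]{PMat}) and then composes the two equivalences of Theorem~\ref{two-triangulated-equivalences}, whereas you verify directly that the two composites are naturally isomorphic to the identity functors. The ingredients you use --- the rotated triangle $K^\bu[-1]\rarrow R\rarrow U\rarrow K^\bu$, the vanishings $U\ot_R^\boL K^\bu[-1]=0=K^\bu[-1]\ot_R^\boL U$ coming from the standing homological-epimorphism hypothesis, and the kernel descriptions of Theorem~\ref{two-triangulated-equivalences} --- are exactly those underlying the paper's adjointness claim, so your route is the same argument in expanded form rather than a genuinely different one.
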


\begin{proof}
 More generally, in the context of
Theorem~\ref{two-triangulated-equivalences}(a), the functor
$\sD^\star(R\modl)\allowbreak\rarrow\sD^\star_{u\co}(R\modl)$
right adjoint to the embedding
$\sD^\star_{u\co}(R\modl)\rarrow\sD^\star(R\modl)$
is computed as $K^\bu[-1]\ot_R^\boL{-}$.
 Similarly, in the context of
Theorem~\ref{two-triangulated-equivalences}(b), the functor
$\sD^\star(R\modl)\rarrow\sD^\star_{u\ctra}(R\modl)$ left adjoint
to the embedding $\sD^\star_{u\ctra}(R\modl)\rarrow\sD^\star(R\modl)$
is computed as $\boR\Hom_R(K^\bu[-1],{-})$
(cf.~\cite[Proposition~4.4]{PMat}).
\end{proof}

 The results of this section can be expressed by existence of
the following \emph{recollement} of triangulated categories for
any homological ring epimorphism $u\:R\rarrow U$ such that 
$\fd U_R\le1$ and $\pd{}_RU\le1$:
\begin{equation} \label{general-matlis-recollement}
\begin{tikzcd}
 \sD^\star(U\modl) \arrow[r, tail] &
 \sD^\star(R\modl)
 \arrow[l, two heads, bend left=30]
 \arrow[l, two heads, bend right=30]
 \arrow[r, two heads] &
 {\sD^\star_{u\co}(R\modl)=\sD^\star_{u\ctra}(R\modl)\qquad\qquad}
 \arrow[l, tail, bend left=20]
 \arrow[l, tail, bend right=20]
\end{tikzcd}
\mkern-72mu
\end{equation}
 Here the arrows with a tail denote fully faithful triangulated functors,
while the arrows with two heads denote triangulated Verdier quotient
functors.
 The two leftmost curvilinear arrows are adjoint on the left and on
 the right to the leftmost straight arrow, while the two rightmost
curvilinear arrows are adjoint on the left and on the right to
the rightmost straight arrow.
 The image of the leftmost straight arrow is the kernel of the rightmost
straight arrow, and similarly with the two pairs of curvilinear arrows.

 The three leftmost arrows are the functors $\boL u^*$, \,$u_*$, and
$\boR u^!$.
 The two rightmost curvilinear arrows are the inclusions of
the full subcategories $\sD^\star_{u\co}(R\modl)$ and
$\sD^\star_{u\ctra}(R\modl)$ into $\sD^\star(R\modl)$.

\Section{Two Fully Faithful Triangulated Functors}

 In addition to the assumptions on the projective and flat dimension of
the left and right $R$\+module $U$ that we used above, the results
of this section require certain assumptions about the properties of
injective and projective left $R$\+modules vis-\`a-vis the homological
ring homomorphism $u\:R\rarrow U$.
 Specifically, these are the assumptions that injective left $R$\+modules
are $u$\+special and projective left $R$\+modules are $u$\+cospecial,
or in other words, the left $R$\+modules $\Tor_0^R(K^\bu,J)=U/R\ot_RJ$
and $\Ext^0_R(K^\bu,F)=\Hom_R(U/R,F)$ vanish for all injective left
$R$\+modules $J$ and projective left $R$\+modules~$F$
(cf.\ Lemmas~\ref{tor-ext-with-K}(c) and~\ref{special-and-cospecial}).

\begin{thm} \label{fully-faithful-triangulated}
\textup{(a)} Assume that\/ $\fd U_R\le1$ and $(U/R)\ot_R J=0$ for all
injective left $R$\+modules~$J$.
  Then, for any conventional derived category symbol\/ $\star=\bb$, $+$,
$-$, or\/~$\varnothing$, the triangulated functor
$$
 \sD^\star(R\modl_{u\co})\lrarrow\sD^\star(R\modl)
$$
induced by the exact embedding of abelian categories $R\modl_{u\co}
\rarrow R\modl$ is fully faithful, and its essential image coincides with
the full subcategory
$$
 \sD^\star_{u\co}(R\modl)\,\subset\,\sD^\star(R\modl),
$$
providing an equivalence of triangulated categories
$$
 \sD^\star(R\modl_{u\co})\,\cong\,\sD^\star_{u\co}(R\modl).
$$ \par
\textup{(b)} Assume that\/ $\pd{}_RU\le1$ and\/ $\Hom_R(U/R,F)=0$ for
all projective left $R$\+modules~$F$.
 Then, for any conventional derived category symbol\/ $\star=\bb$, $+$,
$-$, or\/~$\varnothing$, the triangulated functor
$$
 \sD^\star(R\modl_{u\ctra})\lrarrow\sD^\star(R\modl)
$$
induced by the exact embedding of abelian categories $R\modl_{u\ctra}
\rarrow R\modl$ is fully faithful, and its essential image coincides with
the full subcategory
$$
 \sD^\star_{u\ctra}(R\modl)\,\subset\,\sD^\star(R\modl),
$$
providing an equivalence of triangulated categories
$$
 \sD^\star(R\modl_{u\ctra})\,\cong\,\sD^\star_{u\ctra}(R\modl).
$$
\end{thm}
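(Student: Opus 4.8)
The plan is to establish part~(a) and deduce part~(b) by the dual argument. For~(a), write $\iota\:R\modl_{u\co}\rarrow R\modl$ for the exact inclusion (Proposition~\ref{u-comodule-category}(a)) and also for the triangulated functor it induces on homotopy and derived categories; being an exact full embedding, $\iota$ reflects acyclicity and is therefore conservative, and it obviously lands in $\sD^\star_{u\co}(R\modl)$, so the task is to show that $\iota\:\sD^\star(R\modl_{u\co})\rarrow\sD^\star_{u\co}(R\modl)$ is an equivalence. Two observations will be used throughout. First, $\Gamma_u(A)=\Tor^R_1(K^\bu,A)$ is a $u$\+comodule for \emph{every} left $R$\+module $A$ (Lemma~\ref{matlis-tor-1-lemma}(c), using $\fd U_R\le1$), while the natural map $\gamma_{u,M}\:\Gamma_u(M)\rarrow M$ is an isomorphism whenever $M$ is a $u$\+comodule (read off from~\eqref{main-tor-sequence}). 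Second, the hypothesis $(U/R)\ot_RJ=0$ says exactly that every injective $R$\+module $J$ is $u$\+special, so by~\eqref{main-tor-sequence} the map $\gamma_{u,J}$ has kernel $\Tor^R_1(U,J)$ and cokernel $U\ot_RJ$, both of which are $U$\+modules; consequently, for any complex $\widetilde J^\bu$ of injective $R$\+modules the cone of the natural chain map $\Gamma_u(\widetilde J^\bu)\rarrow\widetilde J^\bu$ sits in a triangle whose other two vertices are the complexes of $U$\+modules $\Tor^R_1(U,\widetilde J^\bu)$ and $U\ot_R\widetilde J^\bu$, and hence is isomorphic in $\sD(R\modl)$ to $u_*V^\bu$ for some $V^\bu\in\sD(U\modl)$.

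First I would settle essential surjectivity. Given $X^\bu\in\sD^\star_{u\co}(R\modl)$, choose a homotopy\+injective resolution $X^\bu\rarrow\widetilde J^\bu$ over the Grothendieck category $R\modl$ with all terms $\widetilde J^n$ injective. Then $\Gamma_u(\widetilde J^\bu)$ is a complex of $u$\+comodules and, by the previous paragraph, the cone of $\Gamma_u(\widetilde J^\bu)\rarrow\widetilde J^\bu$ is $u_*V^\bu$. Applying $\boL u^*$ and using Theorem~\ref{two-triangulated-equivalences}(a) (which identifies $\sD^\star_{u\co}(R\modl)$ with the kernel of $\boL u^*$), one gets $\boL u^*(\widetilde J^\bu)\cong\boL u^*(X^\bu)=0$ and $\boL u^*(\Gamma_u(\widetilde J^\bu))=0$, hence $V^\bu=0$; so $\Gamma_u(\widetilde J^\bu)\rarrow\widetilde J^\bu$ is a quasi\+isomorphism and $X^\bu\cong\iota\Gamma_u(\widetilde J^\bu)$. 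In particular, $\gamma_{u,\widetilde J^\bu}$ is a quasi\+isomorphism for every homotopy\+injective complex $\widetilde J^\bu$ of injectives with $u$\+comodule cohomology.

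Next comes full faithfulness, whose key ingredient is that $\Gamma_u$, being right adjoint to the exact functor $\iota$, carries homotopy\+injective complexes over $R\modl$ to homotopy\+injective complexes over $R\modl_{u\co}$: if $B^\bu$ is acyclic over $R\modl_{u\co}$ then $\iota B^\bu$ is acyclic over $R\modl$ (exactness of $\iota$), and assembling the adjunction isomorphisms $\Hom_R(\iota B^n,\widetilde J^m)\cong\Hom_{R\modl_{u\co}}(B^n,\Gamma_u(\widetilde J^m))$ identifies the $\Hom$\+complexes, whence $\Hom_{\sK(R\modl_{u\co})}(B^\bu,\Gamma_u(\widetilde J^\bu))\cong\Hom_{\sK(R\modl)}(\iota B^\bu,\widetilde J^\bu)=0$. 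Now fix $N_1^\bu$, $N_2^\bu\in\sD^\star(R\modl_{u\co})$ and a homotopy\+injective resolution $f\:\iota N_2^\bu\rarrow\widetilde J^\bu$ over $R\modl$ with $\widetilde J^n$ injective. The same assembly of adjunction isomorphisms yields
\begin{multline*}
 \Hom_{\sD^\star(R\modl)}(\iota N_1^\bu,\iota N_2^\bu)\,\cong\,
 \Hom_{\sK(R\modl)}(\iota N_1^\bu,\widetilde J^\bu) \\
 \cong\,\Hom_{\sK(R\modl_{u\co})}(N_1^\bu,\Gamma_u(\widetilde J^\bu))
 \,\cong\,\Hom_{\sD^\star(R\modl_{u\co})}(N_1^\bu,\Gamma_u(\widetilde J^\bu)),
\end{multline*}
the last isomorphism by the homotopy\+injectivity of $\Gamma_u(\widetilde J^\bu)$ over $R\modl_{u\co}$. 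It then remains to identify $\Gamma_u(\widetilde J^\bu)$ with $N_2^\bu$ in $\sD^\star(R\modl_{u\co})$: the chain map $N_2^\bu\xrightarrow{\ \sim\ }\Gamma_u(\iota N_2^\bu)\xrightarrow{\ \Gamma_u(f)\ }\Gamma_u(\widetilde J^\bu)$ (the first map induced by $\gamma_u$, invertible on $u$\+comodules) becomes an isomorphism after applying $\iota$, because by naturality of $\gamma_u$ it equals $\gamma_{u,\widetilde J^\bu}^{-1}\circ f\circ\gamma_{u,\iota N_2^\bu}$ in $\sD(R\modl)$ while $\gamma_{u,\widetilde J^\bu}$ is a quasi\+isomorphism by the previous paragraph (here $\widetilde J^\bu\simeq\iota N_2^\bu$ has $u$\+comodule cohomology); since $\iota$ is conservative, that chain map is already an isomorphism in $\sD^\star(R\modl_{u\co})$. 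A diagram chase shows that the resulting bijection is the map induced by $\iota$, which is therefore an isomorphism; together with essential surjectivity this proves $\sD^\star(R\modl_{u\co})\cong\sD^\star_{u\co}(R\modl)$. Part~(b) runs dually, with $\sD^\star_{u\ctra}(R\modl)=\ker(\boR u^!)$ in place of $\ker(\boL u^*)$, homotopy\+projective resolutions in place of homotopy\+injective ones, the reflector $\Delta_u$ (left adjoint to the exact inclusion $R\modl_{u\ctra}\rarrow R\modl$, Proposition~\ref{u-contramodule-category}) in place of $\Gamma_u$, the isomorphism $\delta_{u,C}\:C\rarrow\Delta_u(C)$ for $u$\+contramodules $C$ in place of $\gamma_{u,M}$, and the hypothesis $\Hom_R(U/R,F)=0$ — which makes projective $R$\+modules $u$\+cospecial, so that the cone of $\delta_{u,F}\:F\rarrow\Delta_u(F)$ is, up to quasi\+isomorphism, $u_*$ of a complex of $U$\+modules.

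The step I expect to be the main obstacle — the reason the scheme is not purely formal — is the input on resolutions in the \emph{unbounded} case $\star=\varnothing$: one needs homotopy\+injective resolutions to exist over $R\modl_{u\co}$ (equivalently over $R\modl$, where they are then preserved by $\Gamma_u$ as above), and dually homotopy\+projective resolutions of arbitrary complexes over $R\modl_{u\ctra}$. This is exactly where the facts that $R\modl_{u\co}$ is a Grothendieck abelian category (Corollary~\ref{u-comodule-category-Grothendieck}) and that $R\modl_{u\ctra}$ is a locally presentable abelian category with a projective generator (Lemma~\ref{u-contramodule-category-loc-pres}) become indispensable; for the bounded symbols $\star=\bb$, $+$, $-$ one may take the resolutions bounded on the appropriate side, and then the argument is entirely elementary.
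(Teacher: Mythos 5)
Your argument is correct, but it takes a different route from the paper. The paper proves this theorem by reduction to the general machinery of~\cite[Theorem~6.4 and Proposition~6.5]{PMat}: one observes that the pair $\Tor^R_{0,1}(K^\bu,{-})$ (resp.\ $\Ext^{0,1}_R(K^\bu,{-})$) is a homological (resp.\ cohomological) functor which, by the hypothesis that injectives are $u$\+special (resp.\ projectives are $u$\+cospecial), is the derived functor of the coreflector $\Gamma_u$ (resp.\ the reflector $\Delta_u$), with $\boR^1\Gamma_u$ vanishing on $u$\+comodules (resp.\ $\boL_1\Delta_u$ vanishing on $u$\+contramodules); the cited theorem then yields full faithfulness, with the $u$\+special/$u$\+cospecial modules playing the role of the adjusted classes. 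You instead give a self-contained argument: you transfer homotopy-injectivity along the adjunction $\iota\dashv\Gamma_u$ by assembling the termwise adjunction isomorphisms into an isomorphism of $\Hom$\+complexes, and you prove that $\gamma_{u,\widetilde J^\bu}$ is a quasi-isomorphism on a K\+injective complex of injectives with $u$\+comodule cohomology by exhibiting its cone as $u_*V^\bu$ (here the hypothesis $(U/R)\ot_RJ=0$ enters, exactly as in the paper) and killing $V^\bu$ via Theorem~\ref{two-triangulated-equivalences}(a) together with $\boL u^*u_*\cong\Id$. What the paper's route buys is brevity and a conceptual identification (the two-term Tor/Ext functor \emph{is} the derived functor of the adjoint); what your route buys is independence from the external reference and an argument that works uniformly for all symbols~$\star$, making explicit where the recollement and the homological-epimorphism hypothesis are used. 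Note that your final paragraph slightly misstates the technical input: the argument as you set it up only needs K\+injective (resp.\ K\+projective) resolutions over $R\modl$, where they classically exist, so the Grothendieck/locally presentable structure of $R\modl_{u\co}$ and $R\modl_{u\ctra}$ is not actually needed; and for $\star=\bb$ or~$-$ in part~(a) (dually $\bb$ or~$+$ in part~(b)) the resolution cannot be taken bounded on the relevant side, so one should add a truncation step inside $R\modl_{u\co}$ (resp.\ $R\modl_{u\ctra}$) to land in the correct bounded derived category. These are minor repairs and do not affect the substance of your proof.
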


\begin{proof}
 This is an application of the general technique formulated
in~\cite[Theorem~6.4 and Proposition~6.5]{PMat}.
 Let us explain part~(b).
 The pair of functors $\Ext^i_R(K^\bu,{-})$, \ $i=0$,~$1$,
is a cohomological functor between the abelian categories $R\modl$
and $R\modl_{u\ctra}$, that is, for every short exact sequence of
left $R$\+modules $0\rarrow A\rarrow B\rarrow C\rarrow0$ there is
a short exact sequence of left $u$\+contramodules
(cf.\ Lemmas~\ref{tor-ext-with-K}(a\+b)
and~\ref{matlis-ext-1-lemma}(a,c))
\begin{multline*}
 0\lrarrow\Ext^0_R(K^\bu,A)\lrarrow\Ext^0_R(K^\bu,B)\lrarrow
\Ext^0_R(K^\bu,C) \\ \lrarrow\Ext^1_R(K^\bu,A)\lrarrow
\Ext^1_R(K^\bu,B)\lrarrow\Ext^1_R(K^\bu,C)\lrarrow0.
\end{multline*}
 Since, by our assumption, the functor $\Ext^0_R(K^\bu,{-})$
annihilates projective left $R$\+modules, it follows that
our cohomological functor $\Ext^*_R(K^\bu,{-})$ is the left derived
functor of the functor $\Delta=\Delta_u=\Ext^1_R(K^\bu,{-})\:R\modl
\rarrow R\modl_{u\ctra}$, that is $\boL_1\Delta_u=\Ext^0_R(K^\bu,{-})$
and $\boL_i\Delta_u=0$ for $i>1$.

 By Proposition~\ref{u-contramodule-category}(b), the functor $\Delta_u$
is left adjoint to the exact, fully faithful embedding functor
$R\modl_{u\ctra}\rarrow R\modl$, so we are in the setting
of~\cite[Theorem~6.4]{PMat}.
 It remains to point out that $\boL_1\Delta_u(B)=\Ext^0_R(K^\bu,B)=0$
for all left $u$\+contramodules~$B$.
 Notice that the class $R\modl_{\Delta\adj}=\Ker(\boL_{>0}\Delta)$ of
$\Delta$\+adjusted left $R$\+modules, playing a key role in
the argument in~\cite[Section~6]{PMat}, is nothing but the class of
$u$\+cospecial left $R$\+modules in our context, according to
Lemma~\ref{special-and-cospecial}.

 Similarly, in part~(a) one observes that the pair of functors
$\Tor_i^R(K^\bu,{-})$, \ $i=0$,~$1$ is a homological functor between
the abelian categories $R\modl$ and $R\modl_{u\co}$, hence, whenever
the functor $\Tor_1^R(K^\bu,{-})$ annihilates injective left
$R$\+modules, it is the right derived functor of the functor
$\Gamma=\Gamma_u=\Tor_1^R(K^\bu,{-})\:R\modl\rarrow R\modl_{u\co}$,
that is $\boR^1\Gamma_u=\Tor_0^R(K^\bu,{-})$ and
$\boR^i\Gamma=0$ for $i>1$.
 It remains to point out that $\boR^1\Gamma_u(A)=\Tor_0^R(K^\bu,A)=0$
for all left $u$\+comodules~$A$.
 As above, we notice that the class $R\modl_{\Gamma\adj}=
\Ker(\boR^{>0}\Gamma)$ of $\Gamma$\+adjusted left $R$\+modules
is just the class of $u$\+special left $R$\+modules discussed in
Section~\ref{second-additive-equivalence}.
\end{proof}

\begin{rem}
 Conversely, if $\fd U_R\le1$ and the triangulated functor
$\sD^\bb(R\modl_{u\co})\allowbreak\rarrow\sD^\bb(R\modl)$ is
fully faithful, then $(U/R)\ot_RJ=0$ for all injective
left $R$\+modules~$J$.
 A proof of this can be found in~\cite[Lemma~3.9 and
Proposition~4.2]{CX} (cf.~\cite[Remark~6.8]{PMat}).
 Similarly, if $\pd{}_RU\le1$ and the triangulated functor
$\sD^\bb(R\modl_{u\ctra})\rarrow\sD^\bb(R\modl)$ is fully faithful, then
$\Hom_R(U/R,F)=0$ for all projective left $R$\+modules~$F$
\cite[Lemma~3.9 and Proposition~4.1]{CX}.
\end{rem}

 The following result can be also found in~\cite[Corollary~4.4]{CX}.

\begin{cor} \label{matlis-derived-equivalence-cor}
 Let $u\:R\rarrow U$ be a homological ring epimorphism.
 Assume that\/ $\fd U_R\le1$ and\/ $\pd{}_RU\le1$.
 Suppose further that $(U/R)\ot_RJ=0$ for all injective left $R$\+modules
$J$ and\/ $\Hom_R(U/R,F)=0$ for all projective left $R$\+modules~$F$.
 Then for every conventional derived category symbol\/ $\star=\bb$, $+$,
$-$, or\/~$\varnothing$, there is a triangulated equivalence between
the derived categories of the abelian categories $R\modl_{u\co}$ and
$R\modl_{u\ctra}$ of left $u$\+comodules and left $u$\+contramodules,
$$
 \sD^\star(R\modl_{u\co})\cong\sD^\star(R\modl_{u\ctra}).
$$
\end{cor}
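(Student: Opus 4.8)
The plan is to obtain the asserted equivalence by composing three equivalences of triangulated categories already established above, so that no new argument is needed beyond checking that the hypotheses match up. The corollary is, in effect, pure assembly.

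First I would note that, $u$ being a homological ring epimorphism, one has $\Tor_i^R(U,U)=0$ for all $i\ge1$, in particular $\Tor_1^R(U,U)=0$; together with the two standing dimension bounds $\fd U_R\le1$ and $\pd{}_RU\le1$, this puts us in the setting of Theorem~\ref{fully-faithful-triangulated} and of Corollary~\ref{one-triangulated-equivalence}. Applying Theorem~\ref{fully-faithful-triangulated}(a) --- whose remaining hypothesis $(U/R)\ot_RJ=0$ for all injective left $R$\+modules $J$ is exactly one of the assumptions of the corollary --- the exact embedding $R\modl_{u\co}\rarrow R\modl$ induces a fully faithful triangulated functor $\sD^\star(R\modl_{u\co})\rarrow\sD^\star(R\modl)$ with essential image $\sD^\star_{u\co}(R\modl)$, hence an equivalence $\sD^\star(R\modl_{u\co})\cong\sD^\star_{u\co}(R\modl)$. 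Dually, Theorem~\ref{fully-faithful-triangulated}(b), whose extra hypothesis $\Hom_R(U/R,F)=0$ for all projective left $R$\+modules $F$ is the other assumption of the corollary, yields $\sD^\star(R\modl_{u\ctra})\cong\sD^\star_{u\ctra}(R\modl)$.

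Finally I would invoke Corollary~\ref{one-triangulated-equivalence}, which under $\fd U_R\le1$ and $\pd{}_RU\le1$ provides the triangulated equivalence $\sD^\star_{u\co}(R\modl)\cong\sD^\star_{u\ctra}(R\modl)$ via the mutually inverse functors $\boR\Hom_R(K^\bu[-1],{-})$ and $K^\bu[-1]\ot_R^\boL{-}$. Composing the three equivalences gives
$$
 \sD^\star(R\modl_{u\co})\,\cong\,\sD^\star_{u\co}(R\modl)\,\cong\,
 \sD^\star_{u\ctra}(R\modl)\,\cong\,\sD^\star(R\modl_{u\ctra}),
$$
as desired. I expect no genuine obstacle here; the only point to verify is the bookkeeping that the four standing hypotheses of the corollary are precisely those demanded by the two halves of Theorem~\ref{fully-faithful-triangulated} together with Corollary~\ref{one-triangulated-equivalence}. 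If desired, one could further unwind the composite to check that the resulting equivalence $\sD^\star(R\modl_{u\co})\rarrow\sD^\star(R\modl_{u\ctra})$ is induced, via the above identifications, by $\boR\Hom_R(K^\bu[-1],{-})$, with inverse induced by $K^\bu[-1]\ot_R^\boL{-}$; but this refinement is not needed for the statement as given.
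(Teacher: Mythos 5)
Your proposal is correct and coincides with the paper's own proof, which likewise obtains the equivalence by composing the chain $\sD^\star(R\modl_{u\co})\cong\sD^\star_{u\co}(R\modl)\cong\sD^\star_{u\ctra}(R\modl)\cong\sD^\star(R\modl_{u\ctra})$ from Theorem~\ref{fully-faithful-triangulated}(a\+b) and Corollary~\ref{one-triangulated-equivalence}. The hypothesis bookkeeping you describe is exactly what is needed, so nothing is missing.
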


\begin{proof}
 According to Corollary~\ref{one-triangulated-equivalence} and
Theorem~\ref{fully-faithful-triangulated}(a\+b), we have a chain of
triangulated equivalences
$$
 \sD^\star(R\modl_{u\co})\cong\sD^\star_{u\co}(R\modl)\cong
 \sD^\star_{u\ctra}(R\modl)\cong\sD^\star(R\modl_{u\ctra}).
$$
\end{proof}

 Under the assumptions of
Corollary~\ref{matlis-derived-equivalence-cor},
the recollement~\eqref{general-matlis-recollement} takes the form
\begin{equation} \label{particular-matlis-recollement}
\begin{tikzcd}
 \sD^\star(U\modl) \arrow[r, tail] &
 \sD^\star(R\modl)
 \arrow[l, two heads, bend left=30]
 \arrow[l, two heads, bend right=30]
 \arrow[r, two heads] &
 {\sD^\star(R\modl_{u\co})=\sD^\star(R\modl_{u\ctra})\qquad\qquad}
 \arrow[l, tail, bend left=20]
 \arrow[l, tail, bend right=20]
\end{tikzcd}
\mkern-72mu
\end{equation}
 In the recollement~\eqref{particular-matlis-recollement}, all the three
triangulated categories are derived categories of certain abelian
categories (and the third one is even the derived category of two
different abelian categories).

\begin{ex} \label{injective-epi-matlis-example}
 For any injective ring epimorphism $u\:R\rarrow U$, the conditions
$(U/R)\ot_RJ=0$ and $\Hom_R(U/R,F)=0$ hold for all injective left
$R$\+modules $J$ and all projective left $R$\+modules~$F$.
 Indeed, if $u$~is injective and $J$ is an injective left $R$\+module,
then any left $R$\+module morphism $R\rarrow J$ can be extended to
a left $R$\+module morphism $U\rarrow J$.
 Hence the left $R$\+module $J$ is $u$\+divisible (i.~e., a quotient
$R$\+module of a left $U$\+module).
 Thus $U/R\ot_RU=0$ implies $U/R\ot_RJ=0$.
 Similarly, the map $F\rarrow U\ot_RF$ is injective for any flat left
$R$\+module $F$, so $F$ is $u$\+torsionfree (i.~e., an $R$\+submodule
of a left $U$\+module).
 Therefore, $\Hom_R(U/R,U)=0$ implies $\Hom_R(U/R,F)=0$.
\end{ex}

\Section{Kronecker Quiver Example} \label{kronecker-quiver-secn}

 Let $k$ be an algebraically closed field, and let $R$ denote the path
algebra of the Kronecker quiver $\bullet\rightrightarrows\bullet$
over~$k$.
 So left $R$\+modules are pairs of $k$\+vector spaces $(V_1,V_2)$ endowed
with a pair of $k$\+linear maps $f_V$, $g_V\:V_1\rightrightarrows V_2$.
 The aim of this section is to describe the full subcategories of
comodules and contramodules for certain ring epimorphisms originating
from~$R$.

 We will interpret $R$ as the matrix ring $R=\left(\begin{smallmatrix}
k & k\oplus kx \\ 0 & k \end{smallmatrix}\right)$, where the element
$1\in k\oplus kx$ in the upper right corner acts in the quiver
representations by the map~$f_V$ and the element $x\in k\oplus kx$ acts
by the map~$g_V$.
 When the map~$f_V$ is invertible, the fraction $x=g_V/f_V$ is a linear
operator $V_1\rarrow V_1$ or $V_2\rarrow V_2$.
 The eigenvalues of this operator, if they happen to exist, can be
thought of as points of the projective line
$\boP^1(k)=k\cup\{\infty\}$ with the coordinate~$x$.

 Let $\boX\subset\boP^1(k)$ be a subset of points of the projective
line such that $\infty\in\boX$.
 Denote by $S_\boX=\boX^{-1}k[x]$ the localization of the ring of
polynomials $k[x]$ at the multiplicative subset generated by the elements
$x-\lambda$, \ $\lambda\in\boX\setminus\{\infty\}$.
 Consider the matrix ring $U_\boX=\left(\begin{smallmatrix}
S_\boX & S_\boX \\ S_\boX & S_\boX \end{smallmatrix}\right)$.
 Then there is a ring homomorphism $u_\boX\:R\rarrow U_\boX$ given
by the inclusion of the matrices.
 The map~$u_\boX$ is a homological ring epimorphism.
 The essential image of the functor of restriction of scalars
$u_\boX{}_*\:U_\boX\modl\rarrow R\modl$ consists of all the quiver
representations $(f_V,g_V)$ such that the map~$f_V$ is an isomorphism and
the map $g_V-\lambda f_V\:V_1\rarrow V_2$ is an isomorphism for all
$\lambda\in\boX\setminus\{\infty\}$.

 In particular, for $\boX=\{\infty\}$ we have $U_{\{\infty\}}=
\left(\begin{smallmatrix} k[x] & k[x] \\ k[x] & k[x]
\end{smallmatrix}\right)$.
 The essential image of the functor $u_{\{\infty\}}{}_*\:
U_{\{\infty\}}\modl\rarrow R\modl$ consists of all the quiver
representations $(f_V,g_V)$ such that the map~$f_V$ is invertible.
 For an arbitrary subset $\{\infty\}\in\boX\subset\boP^1(k)$, the morphism
$u_\boX\:R\rarrow U_\boX$ factorizes as the composition of two injective
homological ring epimorphisms $R\rarrow U_{\{\infty\}}\rarrow U_\boX$.
 The ring $U_\boX$ has both flat and projective dimension~$1$ both as
a left and as a right $R$\+module (as a left and right
$U_{\{\infty\}}$\+module, it has flat dimension~$0$ and
projective dimension~$1$).

 So the full subcategories of $u_\boX$\+comodules and
$u_\boX$\+contramodules in $R\modl$ are abelian.
 Specifically, let us say that a quiver representation $M=(f_M,g_M)$ is
a ``($\lambda=\infty$)\+comodule'' if $\Hom_R(M,V)=0=\Ext^1_R(M,V)$ for
any quiver representation $V=(f_V,g_V)$ with the map~$f_V$ invertible.
 A quiver representation $C=(f_C,g_C)$ is
a ``($\lambda=\infty$)\+contramodule'' if $\Hom_R(V,C)=0=\Ext^1_R(V,C)$ for
any such~$V$.

 More generally, we will say that a quiver representation $M$ is
an ``$\boX$\+comodule'' if $\Hom_R(M,V)=0=\Ext^1_R(M,V)$ for any
$V=(f_V,g_V)$ with the maps $f_V$ and $g_V-\lambda f_V$ invertible for all
$\lambda\in\boX$.
 A quiver representation $C$ is an ``$\boX$\+contramodule'' if
$\Hom_R(V,C)=0=\Ext^1_R(V,C)$ for any such~$V$.
 One can easily see that a quiver representation is an $\boX$\+comodule
if and only if the related left $R$\+module is a $u_\boX$\+comodule,
and similarly, a quiver representation is an $\boX$\+contramodule if
and only if the related left $R$\+module is a $u_\boX$\+contramodule.
 
 Assume first that $0\notin\boX$.
 Denote by $y$ the coordinate $1/x$ on $\boP^1(k)$ (so $y$~is a possible
eigenvalue of $f_V/g_V$), and let $\boY\subset\boA^1(k)$ denote the subset
of the affine line consisting of all points $\mu\in k$ such that
$\mu^{-1}\in\boX$.
 Consider the polynomial ring $k[y]$, and denote by $T_\boY=\boY^{-1}k[y]$
its localization at the multiplicative subset generated by
the elements $y-\mu$, \ $\mu\in\boY$.
 Denote by $v_\boY$ the ring epimorphism $k[y]\rarrow T_\boY$.
 In particular, if $\boX=\{\infty\}$, then $\boY=\{0\}$ and
$\boT_\boY=k[y,y^{-1}]$.

 The results of the following lemma are (easy) particular cases of
the theory developed in~\cite[Section~13]{Pcta}
and~\cite[Section~4 and/or~6]{BP}.

\begin{lem} \label{polynomials-co-contra}
\textup{(a)}
 If\/ $\boY=\{\mu\}$ is a one-point set, then a $k[y]$\+module $M$ is
a $v_{\{\mu\}}$\+comodule if and only if the operator $y-\mu$ is locally
nilpotent in $M$, i.~e., for every $m\in M$ there exists
$n\in\boZ_{\ge1}$ such that $(y-\mu)^nm=0$.
 For an arbitrary subset\/ $\boY\subset\boA^1(k)$, any $v_\boY$\+comodule
$M$ has a unique, functorial decomposition into a direct sum of
$v_{\{\mu\}}$\+comodules $M_\mu$ over $\mu\in\boY$, and any such direct
sum $M=\bigoplus_{\mu\in\boY}M_\mu$ of $v_{\{\mu\}}$\+comodules $M_\mu$
is a $v_\boY$\+comodule.
 The category $k[y]\modl_{v_\boY\co}$ of $v_\boY$\+comodules is thus
equivalent to the Cartesian product of the categories
$k[y]\modl_{v_{\{\mu\}}\co}$ over $\mu\in\boY$. \par
\textup{(b)}
 If\/ $\boY=\{\mu\}$ is a one-point set, then a $k[y]$\+module $C$ is
a $v_{\{\mu\}}$\+contramodule if and only if it admits $(y-\mu)$\+power
infinite summation operations in the sense of\/~\cite[Section~3]{Pcta}.
 For an arbitrary subset\/ $\boY\subset\boA^1(k)$, any
$v_\boY$\+contramodule $C$ has a unique, functorial decomposition into
a direct product of $v_{\{\mu\}}$\+contramodules $C^\mu$ over
$\mu\in\boY$, and any such direct product $C=\prod_{\mu\in\boY}C^\mu$
of $v_{\{\mu\}}$\+contramodules $C^\mu$ is a $v_\boY$\+contramodule.
 The category $k[y]\modl_{v_\boY\ctra}$ of $v_\boY$\+contramodules is
thus equivalent to the Cartesian product of the categories
$k[y]\modl_{v_{\{\mu\}}\ctra}$ over $\mu\in\boY$. \qed
\end{lem}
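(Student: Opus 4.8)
The plan is to reduce all the assertions to the one-point case by means of partial fraction decomposition. Write $K^\bu_\boY$ for the two-term complex $(k[y]\rarrow T_\boY)$ and $K^\bu_\mu$ for $(k[y]\rarrow T_{\{\mu\}})$; since $v_\boY$ and every $v_{\{\mu\}}$ are injective, we have quasi-isomorphisms $K^\bu_\boY\simeq T_\boY/k[y]$ and $K^\bu_\mu\simeq T_{\{\mu\}}/k[y]$. The key point is that partial fractions --- available because $k$ is algebraically closed (so the $y-\mu$ are linear) and every element of $T_\boY$ has only finitely many poles --- give a natural isomorphism of $k[y]$\+modules
\[
 T_\boY/k[y]\,\cong\,\bigoplus\nolimits_{\mu\in\boY}T_{\{\mu\}}/k[y],
\]
so that $K^\bu_\boY\cong\bigoplus_{\mu\in\boY}K^\bu_\mu$ in $\sD^\bb(k[y]\modl)$; one only has to check that partial fraction decomposition is $k[y]$\+linear modulo polynomials, which is routine. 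Moreover $T_\boY$ is a flat $k[y]$\+module of projective dimension $\le1$, so $v_\boY$ is a homological epimorphism with $\fd (T_\boY)_{k[y]}\le1$ and $\pd{}_{k[y]}T_\boY\le1$ (and likewise for each $v_{\{\mu\}}$); thus all the results of Sections~\ref{first-additive-equivalence}--\ref{u-co-and-contra-categories-secn} apply.

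Next I would settle the one-point case. For~(a): $T_{\{\mu\}}$ is flat, so $\Tor^{k[y]}_1(T_{\{\mu\}},M)=0$ automatically, and a $k[y]$\+module $M$ is a $v_{\{\mu\}}$\+comodule if and only if $T_{\{\mu\}}\ot_{k[y]}M=0$, i.e.\ the localization of $M$ inverting $y-\mu$ vanishes, i.e.\ every element of $M$ is annihilated by a power of $y-\mu$ --- which is exactly local nilpotence of $y-\mu$. For~(b): after the substitution $z=y-\mu$ the map $v_{\{\mu\}}$ becomes the localization $k[z]\rarrow k[z,z^{-1}]$ from the example at the beginning of Section~\ref{first-additive-equivalence}, and a $v_{\{\mu\}}$\+contramodule is exactly a contramodule over the coalgebra dual to $k[[z]]$; the description of such modules via $z$\+power infinite summation operations is~\cite[Section~3]{Pcta} (see also~\cite[Sections~1.3 and~1.6]{Prev}).

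For the decomposition I would apply $\Gamma_{v_\boY}=\Tor^{k[y]}_1(K^\bu_\boY,{-})$ and $\Delta_{v_\boY}=\Ext^1_{k[y]}(K^\bu_\boY,{-})$ to the isomorphism above. Because $\Tor_*$ commutes with direct sums in the first argument and $\Ext^*$ turns direct sums in the first argument into products, this gives natural isomorphisms $\Gamma_{v_\boY}(C)\cong\bigoplus_\mu\Gamma_{v_{\{\mu\}}}(C)$ and $\Delta_{v_\boY}(C)\cong\prod_\mu\Delta_{v_{\{\mu\}}}(C)$ for every $k[y]$\+module $C$. If $M$ is a $v_\boY$\+comodule, the five-term sequence~\eqref{main-tor-sequence} (for $v_\boY$) shows $\Gamma_{v_\boY}(M)\cong M$, so $M\cong\bigoplus_\mu M_\mu$ with $M_\mu:=\Gamma_{v_{\{\mu\}}}(M)$, each $M_\mu$ a $v_{\{\mu\}}$\+comodule by Lemma~\ref{matlis-tor-1-lemma}(c); conversely any direct sum of $v_{\{\mu\}}$\+comodules is a $v_\boY$\+comodule, since $y-\mu$ acts both locally nilpotently and invertibly on $T_\boY\ot_{k[y]}M_\mu$ (which therefore vanishes) and $\Tor_1$ vanishes by flatness. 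Dually, if $C$ is a $v_\boY$\+contramodule, then~\eqref{main-ext-sequence} gives $C\cong\Delta_{v_\boY}(C)\cong\prod_\mu C^\mu$ with $C^\mu:=\Delta_{v_{\{\mu\}}}(C)$ a $v_{\{\mu\}}$\+contramodule by Lemma~\ref{matlis-ext-1-lemma}(c), and any product of $v_{\{\mu\}}$\+contramodules is a $v_\boY$\+contramodule. To upgrade this to the asserted equivalences of $k[y]\modl_{v_\boY\co}$ and $k[y]\modl_{v_\boY\ctra}$ with the Cartesian products $\prod_{\mu\in\boY}k[y]\modl_{v_{\{\mu\}}\co}$ and $\prod_{\mu\in\boY}k[y]\modl_{v_{\{\mu\}}\ctra}$, it then suffices to combine the direct-sum/product formulas above, the five-term sequences, and the \emph{cross-term vanishing} $\Gamma_{v_{\{\mu\}}}(M_\nu)=0=\Delta_{v_{\{\mu\}}}(C^\nu)$ for $\mu\neq\nu$, any $v_{\{\nu\}}$\+comodule $M_\nu$, and any $v_{\{\nu\}}$\+contramodule $C^\nu$; this yields $\Gamma_{v_{\{\nu\}}}(\bigoplus_\mu M_\mu)\cong M_\nu$ and $\Delta_{v_{\{\nu\}}}(\prod_\mu C^\mu)\cong C^\nu$ and hence the required natural isomorphisms of composite functors.

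The one genuinely non-formal step --- and the one I expect to be the main obstacle --- is the cross-term vanishing on the contramodule side. Here the point is that on a $v_{\{\nu\}}$\+contramodule $C^\nu$ the operator $y-\mu=(y-\nu)+(\nu-\mu)$ is a topologically nilpotent operator plus a nonzero scalar, hence invertible --- this is the standard fact that $1-z$ acts invertibly on any $z$\+contramodule, which I would either derive from the infinite summation operation $\sum_n z^n(-)$ or cite from~\cite[Section~3]{Pcta}. Once $y-\mu$ is invertible on $C^\nu$, the module $C^\nu$ is a $T_{\{\mu\}}$\+module, so $\Ext^1_{k[y]}(T_{\{\mu\}},C^\nu)\cong\Ext^1_{T_{\{\mu\}}}(T_{\{\mu\}},C^\nu)=0$ (as $v_{\{\mu\}}$ is homological) and the restriction map $\Hom_{k[y]}(T_{\{\mu\}},C^\nu)\rarrow\Hom_{k[y]}(k[y],C^\nu)$ is an isomorphism; plugging these into the long exact sequence of $\Ext^*_{k[y]}({-},C^\nu)$ attached to $0\rarrow k[y]\rarrow T_{\{\mu\}}\rarrow T_{\{\mu\}}/k[y]\rarrow0$ gives $\Delta_{v_{\{\mu\}}}(C^\nu)=\Ext^1_{k[y]}(T_{\{\mu\}}/k[y],C^\nu)=0$. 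The comodule cross term $\Gamma_{v_{\{\mu\}}}(M_\nu)=0$ is easier and dual-analogous ($y-\mu$ acts invertibly on every finitely generated submodule of $M_\nu$, so $M_\nu$ is a $T_{\{\mu\}}$\+module and $\Tor^{k[y]}_1(T_{\{\mu\}}/k[y],M_\nu)=\Ker(M_\nu\rarrow T_{\{\mu\}}\ot_{k[y]}M_\nu)=0$). As the lemma notes, all of this is also a special case of the general theory of~\cite[Section~13]{Pcta} and~\cite[Sections~4 and~6]{BP}, which could simply be invoked.
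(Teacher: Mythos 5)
Your argument is correct, but it takes a genuinely different route from the paper: for this lemma the paper gives no proof at all, simply declaring the statements to be easy particular cases of \cite[Section~13]{Pcta} and \cite[Sections~4 and~6]{BP} --- which is the fallback you mention in your last sentence. What you supply instead is a self-contained derivation whose engine, the partial-fraction decomposition $T_\boY/k[y]\cong\bigoplus_{\mu\in\boY}T_{\{\mu\}}/k[y]$, is essentially the same device the paper uses only later, in the proof of the main theorem of Section~\ref{kronecker-quiver-secn} (the decomposition of $S_\boX$ inducing the splitting of $\Gamma_{u_\boX}$ and $\Delta_{u_\boX}$); so your proof makes the quiver-free case explicit rather than delegating it to the references, at the cost of being longer. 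The individual steps are sound: the one-point comodule description via flatness of $T_{\{\mu\}}$, the identification of $v_{\{\mu\}}$-contramodules with modules carrying $(y-\mu)$-power infinite summation operations (correctly cited), the isomorphisms $\Gamma_{v_\boY}(M)\cong M$ and $C\cong\Delta_{v_\boY}(C)$ from the five-term sequences~\eqref{main-tor-sequence} and~\eqref{main-ext-sequence}, and the cross-term vanishing via invertibility of $y-\mu$ on a module where $y-\nu$ is locally nilpotent, respectively contramodule-topologically nilpotent.

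One small point you should write out: the claim that any product of $v_{\{\mu\}}$-contramodules is a $v_\boY$-contramodule requires $\Hom_{k[y]}(T_\boY,C^\nu)=0=\Ext^1_{k[y]}(T_\boY,C^\nu)$ for every $v_{\{\nu\}}$-contramodule $C^\nu$, and this does not literally follow from the cross-term vanishing $\Delta_{v_{\{\mu\}}}(C^\nu)=0$ alone, since that concerns $T_{\{\mu\}}/k[y]$ rather than $T_\boY$. It does follow from your ingredients: apply $\Hom_{k[y]}({-},C^\nu)$ to $0\rarrow k[y]\rarrow T_\boY\rarrow T_\boY/k[y]\rarrow 0$, use $\Ext^*_{k[y]}(T_\boY/k[y],C^\nu)\cong\prod_\mu\Ext^*_{k[y]}(T_{\{\mu\}}/k[y],C^\nu)$, where the factors with $\mu\ne\nu$ vanish by your argument (also for $\Hom$), and observe that the connecting map $C^\nu\rarrow\Ext^1_{k[y]}(T_\boY/k[y],C^\nu)$ is compatible with the one for $0\rarrow k[y]\rarrow T_{\{\nu\}}\rarrow T_{\{\nu\}}/k[y]\rarrow 0$, which is an isomorphism because $\Hom_{k[y]}(T_{\{\nu\}},C^\nu)=0=\Ext^1_{k[y]}(T_{\{\nu\}},C^\nu)$. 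This is routine diagram chasing (the dual direct-sum statement on the comodule side you did justify directly), so I regard it as a gap in exposition, not in substance.
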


 The next proposition describes $\boX$\+comodule and
$\boX$\+contramodule quiver representations for $\boX\subset\boP^1(k)$,
\,$\infty\in\boX$, \,$0\notin\boX$.

\begin{prop} \label{outside-of-zero-prop}
 Let\/ $\boX$ be a subset in $\boP^1(k)=k\cup\{\infty\}$ containing
$\infty$ and not containing~$0$, and let $\boY\subset\boA^1_k=k$ be
the set of all~$\mu$ such that $\mu^{-1}\in\boX$.
  Then\par
\textup{(a)} a Kronecker quiver representation $M=(f_M,g_M)$ is
an\/ $\boX$\+comodule if and only if the map~$g_M$ is invertible
and the vector space $M_1\cong M_2$ with the linear operator $y=f_M/g_M$
is a $v_\boY$\+comodule; \par
\textup{(b)} a Kronecker quiver representation $C=(f_C,g_C)$ is
an\/ $\boX$\+contramodule if and only if the map~$g_C$ is invertible
and the vector space $C_1\cong C_2$ with the linear operator $y=f_C/g_C$
is a $v_\boY$\+contramodule.
\end{prop}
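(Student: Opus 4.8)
The plan is to reduce the proposition to the polynomial ring case of Lemma~\ref{polynomials-co-contra} via the substitution $x\leftrightarrow y=1/x$. Two tools are used throughout. First, a quiver representation is an $\boX$\+comodule (resp.\ an $\boX$\+contramodule) precisely when it is $\Hom_R$\+ and $\Ext^1_R$\+perpendicular on the left (resp.\ on the right) to every representation isomorphic to a $U_\boX$\+module, i.e.\ to every $V$ with $f_V$ invertible and $g_V-\lambda f_V$ invertible for all $\lambda\in\boX\setminus\{\infty\}$; by the Morita equivalence $U_\boX\modl\cong S_\boX\modl$ such a $V$ may be taken in the normal form $V_1=V_2=V$, with $f_V=\id_V$ and $g_V=x$ the operator of multiplication by~$x$ on the $S_\boX$\+module~$V$. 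Second, for the hereditary algebra $R$ one has $\Hom_R(M,N)=\ker\psi$ and $\Ext^1_R(M,N)=\coker\psi$, where $\psi\:\Hom_k(M_1,N_1)\oplus\Hom_k(M_2,N_2)\rarrow\Hom_k(M_1,N_2)^2$ sends $(\alpha_1,\alpha_2)$ to $(f_N\alpha_1-\alpha_2f_M,\,g_N\alpha_1-\alpha_2g_M)$.

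First I would show that $g_M$ (resp.\ $g_C$) must be invertible. Since $0\notin\boX$, the simple regular representation $R_0$ with $V_1=V_2=k$, \ $f=\id_k$, \ $g=0$ is a $U_\boX$\+module: $f=\id$ is invertible, and $g-\lambda f=-\lambda\cdot\id$ is invertible for every $\lambda\in\boX\setminus\{\infty\}$, as then $\lambda\ne0$. Substituting $N=R_0$ into the formulas above yields $\Hom_R(M,R_0)\cong(\coker g_M)^{*}$ and $\Ext^1_R(M,R_0)\cong(\ker g_M)^{*}$, so an $\boX$\+comodule $M$ has $g_M$ surjective and injective; dually $\Hom_R(R_0,C)\cong\ker g_C$ and $\Ext^1_R(R_0,C)\cong\coker g_C$, so an $\boX$\+contramodule $C$ has $g_C$ invertible.

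Given $g_M$ invertible, I would identify $M_1\cong M_2=:M$ and put $M$ in the normal form $f_M=y$, \ $g_M=\id_M$, making $M$ a $k[y]$\+module as in the statement; likewise for $C$. For a $T_\boY$\+module $T$, the representation $\widetilde T$ with $\widetilde T_1=\widetilde T_2=T$, \ $f=y$, \ $g=\id_T$ is a $U_\boX$\+module: $y$ is invertible on $T$ because $0\in\boY$, and $\id_T-\lambda y=-\lambda(y-\lambda^{-1})$ is invertible because $\lambda^{-1}\in\boY$ whenever $\lambda\in\boX\setminus\{\infty\}$. Since $\Hom$ and $\Ext^1$ between two representations in the normal form $({-},{-};\,f=y,\,g=\id)$ are computed by the same complexes as $\Hom$ and $\Ext^1$ of the underlying $k[y]$\+modules, one gets $\Hom_R(M,\widetilde T)\cong\Hom_{k[y]}(M,T)$ and $\Ext^1_R(M,\widetilde T)\cong\Ext^1_{k[y]}(M,T)$. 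Hence an $\boX$\+comodule $M$ is left perpendicular to all $T_\boY$\+modules, i.e.\ $T_\boY\ot_{k[y]}M=0$, i.e.\ $M$ is a $v_\boY$\+comodule (Lemma~\ref{polynomials-co-contra}(a)); the dual computation gives the ``only if'' part of~(b).

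Conversely, assume $g_M$ invertible and $M$ a $v_\boY$\+comodule, so by Lemma~\ref{polynomials-co-contra}(a) $M=\bigoplus_{\mu\in\boY}M_\mu$ with $y-\mu$ locally nilpotent on $M_\mu$; let $V$ be any $S_\boX$\+module representation in normal form. A morphism $M\rarrow V$ amounts to a $k$\+linear map $\phi\:M\rarrow V$ with $\phi=x\phi y$, which on $M_\mu$, writing $y|_{M_\mu}=\mu+N_\mu$, becomes $(\id_V-\mu x)\,\phi|_{M_\mu}=x\,\phi|_{M_\mu}N_\mu$; here $\id_V-\mu x$ equals $-\mu(x-\mu^{-1})$ for $\mu\ne0$ (invertible on $V$, since $\mu^{-1}\in\boX\setminus\{\infty\}$) and equals $\id_V$ for $\mu=0$, so iterating and using local nilpotence of $N_\mu$ forces $\phi=0$. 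Surjectivity of the $\psi$ computing $\Ext^1_R(M,V)$ follows the same way, solving $\alpha-x\alpha y=\gamma$ on each $M_\mu$ by a locally finite geometric-series-type sum; thus $M$ is an $\boX$\+comodule. Part~(b) is dual-analogous, with the contramodule infinite summation operations of Lemma~\ref{polynomials-co-contra}(b) and~\cite[Section~3]{Pcta} in place of local nilpotence. The one nonroutine point is this iteration/summation step: an $S_\boX$\+module need not decompose, so one cannot reduce to ``small'' test objects $V$ and must run the summation argument directly against the explicit $\Hom$/$\Ext^1$ complexes; everything else is bookkeeping with the Morita equivalence and the Euler form of the Kronecker quiver.
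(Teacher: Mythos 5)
Your proposal is correct, and although it follows the same three-step skeleton as the paper's proof (invertibility of~$g$, then passage to the $k[y]$\+module picture, then the converse), two of the three steps are carried out by genuinely different arguments. For invertibility of $g_M$ and $g_C$ you use the single test object $(k\rightrightarrows k)$ with $f=\id$, $g=0$ (a $U_\boX$\+module precisely because $0\notin\boX$) and the standard two-term complex for the Kronecker quiver, identifying $\Hom_R(M,R_0)$, $\Ext^1_R(M,R_0)$ with $(\coker g_M)^*$, $(\ker g_M)^*$ and $\Hom_R(R_0,C)$, $\Ext^1_R(R_0,C)$ with $\ker g_C$, $\coker g_C$; the paper's Lemma~\ref{implies-invertibility} instead runs a case analysis on sub- and quotient representations, exhibiting nonzero Hom or Ext against several different $U_\boX$\+modules, so your version is shorter. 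Your middle step agrees with Lemma~\ref{X-implies-Y}, except that on the comodule side you test against representations attached to arbitrary $T_\boY$\+modules and use adjunction to get $T_\boY\ot_{k[y]}M=0$, which nicely fills in the paper's ``part~(a) is similar''. The real divergence is the converse direction: the paper's Lemma~\ref{Y-implies-X} reduces, via closure of the perpendicular classes under products, kernels, cokernels and extensions, to the building-block representations with $f=0$ and $g$ invertible (and $\boX=\{\infty\}$), and then checks vanishing of Hom and splitness of extensions directly; you instead compute against an arbitrary normal-form $S_\boX$\+module $V$ and prove bijectivity of the operator $\alpha\mapsto\alpha-x\alpha y$, by a locally finite geometric series in the comodule case and by the $(y-\mu)$\+power infinite summation operations in the contramodule case. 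The contramodule half, which you leave as ``dual-analogous'', does close: $\Hom_k(V,C^\mu)$ is a product of copies of $C^\mu$, hence again a $v_{\{\mu\}}$\+contramodule, and the series $\sum_n t^nS^n$ is summable there for the $k[y]$\+linear operator $S$ of precomposition with $x(\id_V-\mu x)^{-1}$, by Lemma~\ref{polynomials-co-contra}(b) and \cite[Section~3]{Pcta}; but note that this is where the weight of the argument sits --- one cannot argue via $\bigcap_n t^nC^\mu=0$, which may fail for contramodules --- so in a final write-up it deserves the same level of detail as your part~(a). Overall your route is more uniform and avoids the structure-theoretic fact that every $v_{\{0\}}$\+contramodule is obtained from $k$ by cokernels, extensions and projective limits, at the price of manipulating the summation operations directly.
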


 It follows from Lemma~\ref{polynomials-co-contra} and
Proposition~\ref{outside-of-zero-prop} that the category of
$\boX$\+comodules decomposes as a Cartesian product of $\boX$ copies of
the category of vector spaces with a locally nilpotent operator~$z$,
and similarly, the category of $\boX$\+contramodules decomposes
as a Cartesian product of $\boX$ copies of the category of vector
spaces with $z$\+power infinite summation operations.

 Notice that it follows from Proposition~\ref{outside-of-zero-prop}
that the category of $\boX$\+comodules is \emph{equivalent} to
a torsion class (viewed as a full subcategory) in $k[y]\modl$.
 But a subrepresentation of an $\boX$\+comodule is \emph{not}
an $\boX$\+comodule, generally speaking (because the condition of
invertibility of the operator~$g_M$ is not preserved by the passage
to a subrepresentation), in agreement with the discussion in
Section~\ref{when-hereditary-secn}.

 The proof of Proposition~\ref{outside-of-zero-prop} given below
consists of several lemmas.

\begin{lem} \label{implies-invertibility}
 For any\/ $\boX$ not containing\/~$0$, one has: \par
\textup{(a)} in any\/ $\boX$\+comodule $M=(f_M,g_M)$, the map~$g_M$ is
invertible; \par
\textup{(b)} in any\/ $\boX$\+contramodule $C=(f_C,g_C)$, the map~$g_C$
is invertible.
\end{lem}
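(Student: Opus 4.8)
\emph{Proof plan.}
The plan is to test the $\boX$\+comodule (resp.\ $\boX$\+contramodule) condition against a single, carefully chosen Kronecker representation $V_0$ that lies in the essential image of the functor $u_\boX{}_*$ and whose $\Hom$ and $\Ext^1$ against an arbitrary representation directly detect injectivity and surjectivity of the structure map~$g$.

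First I would take $V_0=(k,k,\id_k,0)$: the representation whose two spaces are one\+dimensional, with $f_{V_0}=\id_k$ and $g_{V_0}=0$. Since $\infty\in\boX$ and $0\notin\boX$, for every $\lambda\in\boX\setminus\{\infty\}$ the map $g_{V_0}-\lambda f_{V_0}=-\lambda\cdot\id_k$ is invertible, and $f_{V_0}$ is invertible; hence $V_0$ belongs to the essential image of $u_\boX{}_*\:U_\boX\modl\rarrow R\modl$. Consequently every $\boX$\+comodule $M$ satisfies $\Hom_R(M,V_0)=0=\Ext^1_R(M,V_0)$, and every $\boX$\+contramodule $C$ satisfies $\Hom_R(V_0,C)=0=\Ext^1_R(V_0,C)$.

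Next I would evaluate these groups using the standard four\+term exact sequence attached to a pair of representations of the hereditary path algebra~$R$: for $M=(M_1,M_2,f_M,g_M)$ and $N=(N_1,N_2,f_N,g_N)$ one has the exact sequence $0\rarrow\Hom_R(M,N)\rarrow\Hom_k(M_1,N_1)\oplus\Hom_k(M_2,N_2)\xrightarrow{d}\Hom_k(M_1,N_2)\oplus\Hom_k(M_1,N_2)\rarrow\Ext^1_R(M,N)\rarrow0$ (two copies of $\Hom_k(M_1,N_2)$, one per arrow), with $d(\psi_1,\psi_2)=(f_N\psi_1-\psi_2 f_M,\ g_N\psi_1-\psi_2 g_M)$. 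For part~(a), specializing $N=V_0$ gives $d(\psi_1,\psi_2)=(\psi_1-\psi_2 f_M,\ -\psi_2 g_M)$; since the first coordinate is unconstrained, reading off $\Ker d$ and $\coker d$ yields natural isomorphisms $\Hom_R(M,V_0)\cong(\coker g_M)^\ast$ and $\Ext^1_R(M,V_0)\cong(\Ker g_M)^\ast$ (the latter using exactness of $k$\+linear duality to identify $\coker(g_M^\ast)$ with $(\Ker g_M)^\ast$). As both groups vanish when $M$ is an $\boX$\+comodule, $g_M$ is bijective. For part~(b), specializing $M=V_0$ (so $M_1=M_2=k$, $f_M=\id_k$, $g_M=0$) gives $d(\psi_1,\psi_2)=(f_N\psi_1-\psi_2,\ g_N\psi_1)$ with $\psi_1\in N_1$ and $\psi_2\in N_2$, whence $\Hom_R(V_0,N)\cong\Ker g_N$ and $\Ext^1_R(V_0,N)\cong\coker g_N$; taking $N=C$ and using that both groups vanish when $C$ is an $\boX$\+contramodule shows $g_C$ is bijective.

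I do not expect a genuine obstacle. The only point requiring care is the choice of $V_0$: it must simultaneously lie in the essential image of $u_\boX{}_*$ --- which is exactly where the hypothesis $0\notin\boX$ enters, since it guarantees $g_{V_0}-\lambda f_{V_0}$ is invertible for all relevant~$\lambda$ --- and have $\Hom$ and $\Ext^1$ against an arbitrary representation computing the cokernel/kernel of~$g$. After that the argument is a mechanical inspection of the four\+term sequence above, which is valid for arbitrary (not necessarily finite\+dimensional) representations because it is induced by the standard projective resolution of a representation over the path algebra~$R$.
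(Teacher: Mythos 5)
Your argument is correct, and it takes a genuinely different (and more uniform) route than the paper's. You pick the single test representation $V_0=(k\rightrightarrows k)$ with $f_{V_0}=\id$, $g_{V_0}=0$, check that $0\notin\boX$ makes it a $U_\boX$\+module, and then compute $\Hom$ and $\Ext^1$ against an arbitrary representation via the two\+term projective resolution of a quiver representation, obtaining $\Hom_R(M,V_0)\cong(\coker g_M)^\ast$, $\Ext^1_R(M,V_0)\cong(\Ker g_M)^\ast$, $\Hom_R(V_0,C)\cong\Ker g_C$, and $\Ext^1_R(V_0,C)\cong\coker g_C$; the vanishing of these four groups gives both parts at once, and your formulas check out (including the remark that the four\+term sequence is valid for representations of arbitrary dimension). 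The paper instead proves only part~(b) explicitly, by a four\+way case analysis: assuming $g_C$ is not injective or not surjective, it constructs a subrepresentation or quotient representation of $C$ with $g=0$ (or of the form $k\rightrightarrows0$, resp.\ $0\rightrightarrows k$), produces a nonzero $\Hom_R(V,C)$ or $\Ext^1_R(V,N)$ for a suitable $U_\boX$\+module $V$ (your $V_0$ appears there in the last case), and uses right exactness of $\Ext^1_R(V,{-})$ over the hereditary algebra $R$ to lift nonvanishing of $\Ext^1$ from a quotient of $C$ to $C$ itself. Your approach buys symmetry between (a) and (b) and eliminates the case analysis and the appeal to heredity for lifting $\Ext^1$ (heredity enters only through the length of the projective resolution); the paper's approach buys a computation\+free, structural argument that never writes down the explicit $\Hom$/$\Ext$ complex. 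Both proofs hinge on the same use of the hypothesis $0\notin\boX$, namely that representations with $f$ invertible and $g=0$ are $U_\boX$\+modules.
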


\begin{proof}
 We will prove part~(b).
 Assume that the operator~$g_C$ has a nonzero kernel $K_1\subset C_1$.
 Then there are two possibilities.
 If the kernel of the restriction of $f_C$ to $K_1$ is nonzero, then $C$
contains a copy of the injective representation $k\rightrightarrows0$
as a subrepresentation.
 In this case, for any nonzero representation $V$ with $f_V$~invertible
(hence $V_1\ne0$) there exists a nonzero morphism $V\rarrow
(k\rightrightarrows0)\rarrow C$.
 If the map $f_C|_{K_1}$ is injective, then $C$ contains
a nonzero subrepresentation $K=(K_1,f_C(K_1))$ with $f_K$ invertible and
$g_K=0$, hence $g_K-\lambda f_K$ is invertible for all $\lambda\in\boX$.
 In both cases, there exists a $U_\boX$\+module $V$ and a nonzero
morphism $V\rarrow C$, contradicting the assumption that $C$ is
an $\boX$\+contramodule.

 Assume that the map~$g_C$ is not surjective.
 Then the quiver representation $C$ has a quotient representation
$L=(C_1,C_2/g_C(C_1))$ with $g_L=0$ and $L_2=\coker(g_C)\ne0$.
 Once again, there are two possibilities.
 If the map~$f_L$ is not surjective, then $C$ has a projective
quotient representation $N=(0\rightrightarrows k)$.
 In this case, for any quiver representation $V$, one has
$\Ext_R^1(V,N)=0$ if and only if $V$ is projective.
 In particular, $\Ext_R^1(V,N)\ne0$ for any nonzero representation $V$
with $f_V$~invertible.

 If the map~$f_L$ is surjective, then $N=(L_1/\ker(f_L),L_2)$ is
a nonzero quotient representation of $C$ with $g_N=0$ and
$f_N$~invertible.
 In this case, it suffices to notice that a nonzero vector space with
a zero operator $x=g/f$ is not an injective object of the category
of $k[x]$\+modules.
 In particular, consider the quiver representation
$V=(k\rightrightarrows k)$ with $f_V=1$ and $g_V=0$ (so $f_V$~is
invertible and $g_V-\lambda f_V$ is invertible for all $\lambda\in\boX$).
 Then $\Ext^1_R(V,N)=\Ext^1_{k[x]}(k,N_1)\cong N_1\ne0$ (where
$x=g/f$~acts by zero both in $k$ and in~$N_1$).

 In both cases, we have found a $U_\boX$\+module $V$ such that
$\Ext^1_R(V,N)\ne0$.
 Since the category of Kronecker quiver representations has homological
dimension~$1$, the functor $\Ext^1_R(V,{-})$ is right exact and it
follows that $\Ext^1_R(V,C)\ne0$.
\end{proof}

\begin{lem} \label{X-implies-Y}
 For any\/ $\boX$ not containing\/~$0$, one has: \par
\textup{(a)} for any\/ $\boX$\+comodule $M=(f_M,g_M)$, the vector space
$M_1\cong M_2$ with the linear operator $y=f_M/g_M$ is
a $v_\boY$\+comodule; \par
\textup{(b)} for any\/ $\boX$\+contramodule $C=(f_C,g_C)$, the vector
space $C_1\cong C_2$ with the linear operator $y=f_C/g_C$ is
a $v_\boY$\+contramodule.
\end{lem}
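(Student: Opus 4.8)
The plan is to reduce both statements to Lemma~\ref{polynomials-co-contra} by means of an explicit functor comparing Kronecker representations with $g$ invertible to $k[y]$\+modules. Recall from the statement that $\boY\subset k$ is the set of $\mu$ with $\mu^{-1}\in\boX$; since $\infty\in\boX$ we have $0\in\boY$, so $T_\boY=\boY^{-1}k[y]$ is obtained from $k[y]$ by inverting $y$ together with all the elements $y-\lambda^{-1}$, \ $\lambda\in\boX\setminus\{\infty\}$. I would introduce the functor $\Phi\:k[y]\modl\rarrow R\modl$ sending a $k[y]$\+module $N$ to the representation $(N,N)$ with $f_{\Phi(N)}$ equal to multiplication by $y$ and $g_{\Phi(N)}=\id_N$, and first record three easy facts: (i)~$\Phi$ is exact and fully faithful (a morphism $\Phi(N)\rarrow\Phi(N')$ is a pair of maps which must coincide and commute with $y$, i.e.\ a $k[y]$\+module homomorphism), and its essential image is the class of all representations with $g$ invertible (given such $V$, identify $V_1\cong V_2$ along $g_V$ and put $y:=g_V^{-1}f_V$); (ii)~this essential image is closed under extensions in $R\modl$ ($g$ of the middle term of a short exact sequence of representations is invertible by the five lemma once $g$ of the two ends is), so $\Phi$ induces isomorphisms $\Hom_{k[y]}(N,N')\cong\Hom_R(\Phi N,\Phi N')$ and $\Ext^1_{k[y]}(N,N')\cong\Ext^1_R(\Phi N,\Phi N')$; (iii)~if $N$ is a $T_\boY$\+module, then $\Phi(N)$ lies in the essential image of $u_\boX{}_*$, since $f_{\Phi(N)}=y$ and $g_{\Phi(N)}-\lambda f_{\Phi(N)}=1-\lambda y=-\lambda(y-\lambda^{-1})$ act invertibly on $N$, which is exactly the description of the essential image of $u_\boX{}_*$ given above (all $\lambda\in\boX\setminus\{\infty\}$ are nonzero because $0\notin\boX$).

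For part~(a), let $M$ be an $\boX$\+comodule. By Lemma~\ref{implies-invertibility}(a) the map $g_M$ is invertible, so $M\cong\Phi(M_1)$ with $M_1$ equipped with the operator $y=f_M/g_M$. The localization $\boY^{-1}M_1=T_\boY\ot_{k[y]}M_1$ is a $T_\boY$\+module, hence by~(iii) the representation $\Phi(\boY^{-1}M_1)$ is a $U_\boX$\+module, and therefore $\Hom_R\bigl(M,\Phi(\boY^{-1}M_1)\bigr)=0$ by the definition of an $\boX$\+comodule. By full faithfulness of $\Phi$ this says $\Hom_{k[y]}(M_1,\boY^{-1}M_1)=0$; but the localization morphism $M_1\rarrow\boY^{-1}M_1$ lies in this group, so it vanishes, and since its image generates the target we conclude $\boY^{-1}M_1=0$. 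As $T_\boY$ is flat over $k[y]$, we also have $\Tor_1^{k[y]}(T_\boY,M_1)=0$, so $M_1$ is a $v_\boY$\+comodule.

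For part~(b), let $C$ be an $\boX$\+contramodule. By Lemma~\ref{implies-invertibility}(b) the map $g_C$ is invertible, so $C\cong\Phi(C_1)$ with $C_1$ carrying the operator $y=f_C/g_C$. Since $T_\boY$ is a module over itself, $\Phi(T_\boY)$ is a $U_\boX$\+module by~(iii), so $\Hom_R(\Phi(T_\boY),C)=0=\Ext^1_R(\Phi(T_\boY),C)$ by the definition of an $\boX$\+contramodule. Transporting these vanishings across the isomorphisms of~(ii) yields $\Hom_{k[y]}(T_\boY,C_1)=0=\Ext^1_{k[y]}(T_\boY,C_1)$, that is, $C_1$ is a $v_\boY$\+contramodule.

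The substantive content is concentrated entirely in setting up $\Phi$ correctly: identifying its essential image, checking that it is extension-closed so as to control $\Ext^1$, and matching $T_\boY$\+modules with the $U_\boX$\+modules of the form $\Phi(N)$ via the coordinate change $y=1/x$. Once $\Phi$ is in place there is no serious homological obstacle; the step in part~(a) reduces to the remark that a localization morphism lying in a vanishing Hom group must itself vanish. The only thing to be careful about is the bookkeeping of the two reciprocal coordinates $x$ and $y$ and of the three invertibility conditions ($g$, $f$, and $g-\lambda f$) attached to them.
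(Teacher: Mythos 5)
Your proposal is correct and follows essentially the same route as the paper: your functor $\Phi$ and its extension-closed essential image are just an explicit packaging of the paper's isomorphism $\Ext^*_R(V,C)\cong\Ext^*_{k[y]}(V_1,C_1)$ for representations with invertible~$g$, and testing against $\Phi(T_\boY)$ is exactly the paper's choice of test object for part~(b). For part~(a), which the paper dismisses as ``similar,'' your concrete completion (the localization morphism $M_1\rarrow\boY^{-1}M_1$ lies in a vanishing Hom group and its image generates the target, plus flatness of $T_\boY$ for the Tor condition) is a correct and natural way to supply the missing detail.
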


\begin{proof}
 Part~(b): For any quiver representations $V$ and $C$ with the operators
$g_V$ and $g_C$ invertible one has $\Ext^*_R(V,C)\cong
\Ext^*_{k[y]}(V_1,C_1)$, where $y$~acts in $V_1\cong V_2$ and
$C_1\cong C_2$ by the operators~$f/g$.
 Set $V_1=T_\boY=V_2$, with the operator~$f_V$ being the multiplication
with~$y$ and $g_V=\id$.
 Then the maps $f_V$ and $g_V$ are invertible, and so is the map
$g_V-\lambda f_V$ for all $\lambda\in\boX$.
 Hence $\Ext^*_{k[y]}(T_\boY,C_1)\cong\Ext^*_R(V,C)=0$ whenever
$C$ is an $\boX$\+contramodule.
 The proof of part~(a) is similar.
\end{proof}

\begin{lem} \label{Y-implies-X}
 For any\/ $\boX$ not containing\/~$0$, one has: \par
\textup{(a)} any Kronecker quiver representation $M=(f_M,g_M)$ such
that the map~$g_M$ is invertible and the vector space $M_1\cong M_2$
with the operator $y=f_M/g_M$ belongs to $k[y]\modl_{v_\boY\co}$ is
an\/ $\boX$\+comodule; \par
\textup{(b)} any Kronecker quiver representation $C=(f_C,g_C)$ such
that the map~$g_C$ is invertible and the vector space $C_1\cong C_2$
with the operator $y=f_C/g_C$ belongs to $k[y]\modl_{v_\boY\ctra}$ is
an\/ $\boX$\+contramodule.
\end{lem}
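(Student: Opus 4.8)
The plan is to verify the defining condition of an $\boX$\+comodule directly. For part~(a): given $M=(f_M,g_M)$ with $g_M$ invertible and with $M_1\cong M_2$ (identified via $g_M$) a $v_\boY$\+comodule for the operator $y=y_M:=g_M^{-1}f_M$, we must show that $\Hom_R(M,V)=0=\Ext^1_R(M,V)$ for every Kronecker representation $V=(f_V,g_V)$ in which $f_V$ and all the maps $g_V-\lambda f_V$ (for $\lambda\in\boX\setminus\{\infty\}$) are invertible. I would first reduce to the case where $M$ is ``concentrated at one point'' and then compute $\Hom$ and $\Ext^1$ from the canonical projective presentation of a quiver representation.

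\emph{Reduction.} By Lemma~\ref{polynomials-co-contra}(a) the $v_\boY$\+comodule $M_1$ splits as $\bigoplus_{\mu\in\boY}(M_1)_\mu$ with $y-\mu$ acting locally nilpotently on $(M_1)_\mu$. Each $(M_1)_\mu$ is $y_M$\+invariant, and $f_M=g_My_M$ carries it into $g_M((M_1)_\mu)$; transporting the splitting across $g_M$ thus yields a decomposition $M=\bigoplus_{\mu\in\boY}M^{(\mu)}$ in $R\modl$, each $M^{(\mu)}$ again having invertible $g$ and with $N:=y_{M^{(\mu)}}-\mu$ locally nilpotent. Since $\Hom_R({-},V)$ and $\Ext^1_R({-},V)$ send coproducts to products, it suffices to treat one summand; so from now on $g_M$ is invertible and $N:=y_M-\mu$ is locally nilpotent, for a fixed $\mu\in\boY$.

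\emph{The defect complex and the operator equations.} The Kronecker path algebra is hereditary, and the standard two\+term projective resolution of a representation identifies $\Hom_R(M,V)=\Ker D$ and $\Ext^1_R(M,V)=\coker D$ for the ``defect'' map $D(a,b)=(f_Va-bf_M,\ g_Va-bg_M)$, \ $D\:\Hom_k(M_1,V_1)\oplus\Hom_k(M_2,V_2)\rarrow\Hom_k(M_1,V_2)^{\oplus2}$. The key observation is that both $f_V-\mu g_V\:V_1\rarrow V_2$ and $\id_{V_2}-\mu\,g_Vf_V^{-1}$ are invertible: for $\mu=0$ this is just invertibility of $f_V$, and for $\mu\ne0$ one writes $f_V-\mu g_V=-\mu(g_V-\mu^{-1}f_V)$ and $\id-\mu\,g_Vf_V^{-1}=\mu\,(g_V-\mu^{-1}f_V)f_V^{-1}$ and invokes $\mu^{-1}\in\boX$ together with the hypothesis on $V$. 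Writing $b'=b\circ g_M$ and using $f_V$ invertible, the equation $D(a,b)=0$ becomes $a=Q\,a\,N$ with $Q:=(f_V-\mu g_V)^{-1}g_V\:V_1\rarrow V_1$; iterating gives $a=Q^n a\,N^n$ for all $n$, so local nilpotence of $N$ forces $a=0$ (hence also $b=g_Vag_M^{-1}=0$), proving $\Hom_R(M,V)=0$. Likewise, surjectivity of $D$ reduces to invertibility of $\id-L$ on $\Hom_k(M_1,V_2)$, where $L(b')=T\circ b'\circ N$ with $T:=(\id-\mu\,g_Vf_V^{-1})^{-1}g_Vf_V^{-1}$; the inverse of $\id-L$ is the series $\sum_{n\ge0}T^{\,n}\circ({-})\circ N^{\,n}$, which is well defined since on each $b'$ and each vector $v$ only finitely many terms of $\sum_nT^n(b'(N^nv))$ are nonzero (again local nilpotence of $N$), and telescoping confirms it is a two\+sided inverse. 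Hence $\Ext^1_R(M,V)=0$, which settles part~(a). (Alternatively, one may filter $M^{(\mu)}$ by finite\+dimensional subrepresentations whose composition factors are all the simple regular module at the point with $y$\+coordinate $\mu$ and apply Eklof's lemma, having first checked $\Hom_R/\Ext^1_R$ of that simple into $V$ vanishes via its two\+term resolution.)

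\emph{Part~(b) and the main obstacle.} Part~(b) is dual\+analogous: Lemma~\ref{polynomials-co-contra}(b) gives a product decomposition $C=\prod_{\mu\in\boY}C^\mu$ into $v_{\{\mu\}}$\+contramodules, and $\Hom_R(V,{-})$, $\Ext^1_R(V,{-})$ send products to products, so one reduces to a single $v_{\{\mu\}}$\+contramodule $C^\mu$ and runs the analogue of the defect\+complex computation with $V$ now in the first slot, the role of local nilpotence of $N$ — which made the geometric series above converge — being taken over by the $(y-\mu)$\+power infinite summation operations of the contramodule $C$, which furnish the unique solutions of the corresponding operator equations. The main obstacle is the bookkeeping in the defect\+complex step: one must track exactly which operators built from $f_V$ and $g_V$ are invertible, since this is the only place the hypotheses enter — $\infty\in\boX$ gives invertibility of $f_V$, $0\notin\boX$ is what confines the failure of invertibility of $y_M$ itself to the single value $\mu=0$, and $\mu^{-1}\in\boX$ supplies the spectral shift needed when $\mu\ne0$ — and one must check carefully that the formal inverse series define genuine $k$\+linear maps, which is precisely the point where the comodule (local nilpotence), respectively contramodule, hypothesis is used in an essential way.
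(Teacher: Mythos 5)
Your part~(a) is correct, and it takes a genuinely different route from the paper. After the common first reduction to a single $\mu\in\boY$ (the direct sum decomposition coming from Lemma~\ref{polynomials-co-contra}(a), plus the fact that $\Hom_R({-},V)$ and $\Ext^1_R({-},V)$ turn coproducts into products), the paper reduces further: it changes the coordinate so that $\mu=0$, observes that every $v_{\{0\}}$\+comodule (resp.\ contramodule) is built from the one-dimensional module with $y=0$ by operations under which the class of $\boX$\+comodules (resp.\ $\boX$\+contramodules) is closed, and then checks the vanishing of $\Hom$ and $\Ext^1$ only for the single building-block representation with $f=0$ and $g$ invertible, by an elementary splitting argument. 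You instead compute $\Hom_R(M,V)$ and $\Ext^1_R(M,V)$ as kernel and cokernel of the canonical defect map and invert $\id-L$ by a geometric series, local nilpotence of $N=y_M-\mu$ making the series pointwise finite; your bookkeeping of which operators built from $f_V,g_V$ are invertible is exactly right (up to a harmless sign: $\id-\mu\,g_Vf_V^{-1}=-\mu\,(g_V-\mu^{-1}f_V)f_V^{-1}$). This gives a complete, self-contained proof of~(a) that does not need the building-block reduction; the paper's route buys a verification so soft that the same two-line splitting argument serves both sides, while yours buys explicit formulas.

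In part~(b), however, there is a genuine gap at the very step you flag: you assert that the $(y-\mu)$\+power infinite summation operations ``furnish the unique solutions of the corresponding operator equations''. Existence of solutions (surjectivity of $\id-L$ on $\Hom_k(V_2,C_2)$, where $L(b)=N\circ b\circ w'$ with $N=f_Cg_C^{-1}-\mu$ and $w'$ built from $f_V$, $g_V$) is indeed immediate, since $b\mapsto\sum_{n\ge0}N^n\circ b\circ w'^{\,n}$ is literally an instance of the summation operation. But \emph{uniqueness} is exactly the other half of the claim, namely $\Hom_R(V,C)=0$, and it does not follow from naive iteration: from $b=N\,b\,w'$ one only gets $b(v)\in\bigcap_n N^nC_2$, and $v_{\{\mu\}}$\+contramodules need \emph{not} be $(y-\mu)$\+adically separated, so that intersection can be nonzero. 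To close the gap you must either use the contraassociativity axiom to show that a divisible tower $b(v)=N\,b(w'v)$, $b(w'v)=N\,b(w'^{\,2}v)$,~\dots\ forces $b(v)=0$ (equivalently, verify by the telescoping computation that the series above is a two-sided inverse of $\id-L$, which gives injectivity as well as surjectivity), or else follow the paper's reduction, which only requires checking the single representation $C$ with $f_C=0$ and $g_C$ invertible, where both the vanishing of $\Hom_R(V,C)$ and the splitting of any extension of $V$ by $C$ are elementary. As written, part~(b) rests on an unproved assertion whose proof is precisely the delicate point.
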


\begin{proof}
 Part~(b): the class of $\boX$\+contramodules is closed under infinite
products in the category of Kronecker quiver representations.
 Hence, in view of Lemma~\ref{polynomials-co-contra}(b), it suffices
to consider the case when the vector space $C_1\cong C_2$ with
the operator $y=f_C/g_C$ belongs to $k[y]\modl_{v_{\{\mu\}}\ctra}$
for some fixed value of $\mu\in\boY$.
 Changing the coordinate on $\boP^1(k)$ reduces the question to
the case $\mu=0$.

 Furthermore, any $v_{\{0\}}$\+contramodule (or in other words,
a $k$\+vector space with a $y$\+power infinite summation operation)
can be obtained from the $1$\+dimensional vector space~$k$ with
the operator $y=0$ using cokernels, extensions, and projective limits
(of which the latter reduce to kernels and infinite products).
 The class of $\boX$\+contramodules is closed under all those
operations in the category of quiver representations; so it suffices
to show that the representations $C$ with $g_C$ invertible and
$f_C=0$ are $\boX$\+contramodules for all $\boX\ni\infty$.
 Without loss of generality, one can assume that $\boX=\{\infty\}$.

 Let $V$ be a Kronecker quiver representation with $f_V$ invertible.
 We have to check that $\Hom_R(V,C)=0=\Ext^1_R(V,C)$.
 Indeed, any morphism $h\:V\rarrow C$ vanishes, since the invertibility
of~$f_V$ and the vanishing of~$f_C$ together imply the vanishing of
the map $h_2\:V_2\rarrow C_2$, and then in view of the invertibility
of~$g_C$ the map $h_1\:V_1\rarrow C_1$ vanishes as well.
 Now let $0\rarrow C\rarrow A\rarrow V\rarrow0$ be a short exact
sequence of quiver representations.
 Then the subspaces $C_2$ and $f_A(A_1)$ form a direct sum decomposition
of the vector space~$A_2$, and it follows that the subspaces $C_1$ and
$g_A^{-1}(f_A(A_1))$ form a direct sum decomposition of the vector
space~$A_1$.
 Thus the short exact sequence of representations is split.
 
 The proof of part~(a) is similar.
\end{proof}

\begin{proof}[Proof of Proposition~\ref{outside-of-zero-prop}]
 Follows from Lemmas~\ref{implies-invertibility},
\ref{X-implies-Y}, and~\ref{Y-implies-X}.
\end{proof}

 Now we consider the general case when the subset $\boX\subset\boP^1(k)$
may contain the point~$0$ (so one can possibly have $\boX=\boP^1(k)$).
 The idea is to compute the $R$\+$R$\+bimodule $K=U_\boX/R$, and
consequently the functors $\Gamma_{u_\boX}=\Tor_1^R(K,{-})$ and
$\Delta_{u_\boX}=\Ext^1_R(K,{-})$.
 Then we will use the following category-theoretic observations.
 
\begin{lem}
 Let\/ $\sC$ be a category with products and a zero object.
 Assume that the identity endofunctor\/ $\Id_\sC\:\sC\rarrow\sC$
decomposes as a product of a family of functors $F_i\:\sC\rarrow\sC$,
where $i$~ranges over some index set~$I$,
$$
 \Id_\sC=\prod\nolimits_{i\in I} F_i.
$$
 Denote by\/ $\sC_i\subset\sC$ the essential image of the functor $F_i$,
viewed as a full subcategory in\/~$\sC$.
 Then the category\/ $\sC$ is equivalent to the Cartesian product of
the categories\/ $\sC_i$,
$$
 \sC\,\cong\mathop{\text{\huge $\times$}}\nolimits_{i\in I}\sC_i.
$$
\end{lem}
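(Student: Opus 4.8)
The plan is to convert the decomposition $\Id_\sC\cong\prod_{i\in I}F_i$ into a family of pairwise orthogonal idempotent natural endotransformations of the identity functor, each split by the corresponding $F_i$, and to deduce the equivalence from these. Write $\pi_i\:\Id_\sC\rarrow F_i$ for the natural transformations given by the product projections, so that $(\pi_{i,X})_{i\in I}$ is a product cone in $\sC$ for every object~$X$. Using the zero object of $\sC$, for each $X$ let $\iota_{i,X}\:F_iX\rarrow X$ be the unique morphism into the product $X\cong\prod_kF_kX$ whose $i$-th component is $\id_{F_iX}$ and whose other components are zero morphisms; these are natural in $X$ and satisfy $\pi_i\circ\iota_i=\id_{F_i}$ and $\pi_j\circ\iota_i=0$ whenever $j\ne i$. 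In particular each $F_i$ is a retract of $\Id_\sC$ inside the functor category $[\sC,\sC]$, and since $\Id_\sC$ preserves all products and a retract of a product-preserving functor is again product-preserving (a short diagram chase), every $F_i$ preserves products.

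Next, put $e_i=\iota_i\circ\pi_i\:\Id_\sC\rarrow\Id_\sC$; an immediate computation with the relations above gives $e_i\circ e_i=e_i$ and $e_i\circ e_j=0$ for $i\ne j$. The main step is to prove $F_j\circ F_i=0$ for $i\ne j$. To this end, apply naturality of $e_j$ at the morphism $\iota_{i,Z}\:F_iZ\rarrow Z$ to get $\iota_{i,Z}\circ e_{j,F_iZ}=e_{j,Z}\circ\iota_{i,Z}=\iota_{j,Z}\circ(\pi_{j,Z}\circ\iota_{i,Z})=0$, where the last equality holds because $\pi_{j,Z}\circ\iota_{i,Z}$ is the $j$-th component of $\iota_{i,Z}$, hence a zero morphism since $j\ne i$. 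As $\iota_{i,Z}$ is a split monomorphism (with retraction $\pi_{i,Z}$), this forces $e_{j,F_iZ}=0$, and therefore $\id_{F_jF_iZ}=\pi_{j,F_iZ}\circ\iota_{j,F_iZ}=\pi_{j,F_iZ}\circ e_{j,F_iZ}\circ\iota_{j,F_iZ}=0$, i.e. $F_jF_iZ\cong0$ naturally in~$Z$. Consequently, for every $Z$ all factors of the product cone $F_iZ\cong\prod_kF_k(F_iZ)$ other than the one with $k=i$ vanish, so $\pi_{i,F_iZ}\:F_iZ\rarrow F_iF_iZ$ is an isomorphism and $F_i\circ F_i\cong F_i$. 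Since every object of $\sC_i$ is isomorphic to some $F_iZ$, naturality of $\pi_i$ now shows that $\pi_{i,X}\:X\rarrow F_iX$ is an isomorphism for every $X\in\sC_i$, while $F_jX\cong0$ for $X\in\sC_i$ and $j\ne i$.

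Finally, define $\Phi\:\sC\rarrow\prod_{i\in I}\sC_i$ by $\Phi(X)=(F_iX)_{i\in I}$ (well-defined since $F_iX$ lies in the essential image of $F_i$) and $\Psi\:\prod_{i\in I}\sC_i\rarrow\sC$ by $\Psi((X_i)_{i\in I})=\prod_{i\in I}X_i$, the product being formed in~$\sC$. Then $\Psi\Phi$ sends $X$ to $\prod_iF_iX$, so the hypothesis furnishes a natural isomorphism $\Psi\Phi\cong\Id_\sC$. On the other hand $\Phi\Psi$ sends $(X_i)_{i\in I}$ to $(F_j(\prod_iX_i))_{j\in I}$; since each $F_j$ preserves products this equals $(\prod_iF_jX_i)_{j\in I}$, and by the previous paragraph $\prod_iF_jX_i\cong X_j$ (the factor $i=j$ contributes $F_jX_j\cong X_j$ and every other factor is the zero object), so $\Phi\Psi\cong\Id$. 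Hence $\Phi$ and $\Psi$ are mutually quasi-inverse equivalences and $\sC\cong\prod_{i\in I}\sC_i$. The only genuinely delicate point is the orthogonality $F_j\circ F_i=0$ for $i\ne j$; everything else is either formal or a direct application of the universal property of products in the presence of a zero object.
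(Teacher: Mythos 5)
Your proof is correct and takes essentially the same route as the paper: the same pair of mutually inverse functors (the tuple $(F_i)_{i\in I}$ and the $I$\+indexed product in $\sC$), with the $F_i$ preserving products as retracts of the identity and the orthogonality $F_jF_i=0$ for $i\ne j$ extracted from naturality of the splittings that the zero object provides. Your explicit idempotents $e_i=\iota_i\circ\pi_i$ are just a spelled-out version of the paper's key observation that every morphism $\prod_{i}F_i(C)\rarrow\prod_{i}F_i(D)$ decomposes as a product of morphisms $F_i(C)\rarrow F_i(D)$.
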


\begin{proof}
 The functor $F\:\sC\rarrow
\mathop{\text{\Large $\times$}}_{i\in I}\sC_i$ is simply the collection
of the functors $F_i$, that is $F=(F_i)_{i\in I}$.
 The inverse functor $G\:\mathop{\text{\Large $\times$}}_{i\in I}\sC_i
\rarrow\sC$ is the functor of $I$\+indexed product in the category $\sC$
restricted to the full subcategory
$\mathop{\text{\Large $\times$}}_{i\in I}\sC_i\subset\sC^I$.
 Clearly, the composition $G\circ F\:\sC\rarrow\sC$ is the identity
functor.

 The key observation is that for any objects $C$, $D\in\sC$, any morphism
$\prod_{i\in I}F_i(C)\cong C\rarrow D\cong\prod_{i\in I}F_i(D)$
decomposes as a product of morphisms $F_i(C)\rarrow F_i(D)$.
 It follows that, for any objects $C_i\in\sC_i$ and $D_j\in\sC_j$ with
$i\ne j$, there are no nonzero morphisms $C_i\rarrow D_j$.
 Hence for any object $C_i\in\sC_i$ one has $F_j(C_i)=0$ for all $j\ne i$,
and consequently $F_i(C_i)=C_i$.
 Furthermore, the functors $F_i$ preserve products in $\sC$, since
they are retracts of the identity functor.
 This allows to show that the composition $F\circ G$ is the identity
functor.
\end{proof}

 We recall that, in the category-theoretic terminology, a left adjoint
functor to the inclusion of a subcategory is called a \emph{reflector},
and a subcategory admitting such an adjoint functor is said to be
\emph{reflective}.

\begin{lem} \label{reflector-decomposed}
 Let\/ $\sA$ be a category with products and a zero object, and let\/
$\sC\subset\sA$ be a reflective full subcategory with the reflector\/
$\Delta\:\sA\rarrow\sC$.
 Suppose that the functor $\Delta$ decomposes as a product of a family
of functors\/ $\Delta_i\:\sA\rarrow\sC$,
$$
 \Delta=\prod\nolimits_{i\in I}\Delta_i.
$$
 Denote by\/ $\sC_i\subset\sA$ the essential image of
the functor\/ $\Delta_i$, viewed as a full subcategory in\/~$\sA$.
 Then one has\/ $\sC_i\subset\sC$, the category\/ $\sC$ is equivalent to
the Cartesian product of the categories\/ $\sC_i$, and the functor\/
$\Delta_i\:\sA\rarrow\sC_i$ is the reflector onto the full subcategory\/
$\sC_i\subset\sA$.
\end{lem}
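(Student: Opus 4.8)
The plan is to deduce everything from the preceding lemma by restricting the situation to $\sC$, where the reflector $\Delta$ becomes isomorphic to the identity functor. First I would check that $\sC$ itself satisfies the hypotheses of that lemma: being a reflective full subcategory of $\sA$, the category $\sC$ is closed under all limits that exist in $\sA$, hence it has products, and the terminal object $0\in\sA$ (an empty product) lies in $\sC$. Since the left adjoint $\Delta$ preserves the initial object while the unit $\eta_C\:C\rarrow\Delta(C)$ is an isomorphism for every $C\in\sC$ (reflector identity), we get $\Delta(0)\cong 0$, so $0$ is a zero object of $\sC$. Thus $\sC$ is a category with products and a zero object.

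Next, restricting the decomposition $\Delta=\prod_{i\in I}\Delta_i$ of functors $\sA\rarrow\sC$ along $\sC\hookrightarrow\sA$ gives $\Delta|_\sC=\prod_{i\in I}F_i$ with $F_i:=\Delta_i|_\sC\:\sC\rarrow\sC$, and $\Delta|_\sC\cong\Id_\sC$ via $\eta$. Applying the preceding lemma to $\sC$ with the family $(F_i)_{i\in I}$ yields an equivalence of $\sC$ with the Cartesian product of the essential images $\sC_i'\subset\sC$ of the $F_i$, together with the facts (read off from its proof) that $F_j$ vanishes on $\sC_i'$ for $j\ne i$, and that $F_i$ restricts to the identity on $\sC_i'$. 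To obtain the first two assertions it remains to identify $\sC_i'=\sC_i$. The inclusion $\sC_i'\subseteq\sC_i$ is clear since $F_i=\Delta_i|_\sC$. For the reverse inclusion I would use the elementary fact that, in a category with products and a zero object, a product of morphisms $\prod_j f_j\:\prod_j X_j\rarrow\prod_j Y_j$ is an isomorphism if and only if each $f_j$ is: build the canonical section $\sigma_j^X$ of the projection $\pi_j^X$ out of identities and zero morphisms, observe $f\circ\sigma_j^X=\sigma_j^Y\circ f_j$, and read off an inverse $g_j=\pi_j^X\circ f^{-1}\circ\sigma_j^Y$ of $f_j$ from an inverse of $f$. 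Applying $\Delta_i$ to the unit gives $\Delta_i(\eta_A)\:\Delta_i(A)\rarrow\Delta_i(\Delta A)=F_i(\Delta A)$; since $\Delta(\eta_A)=\prod_j\Delta_j(\eta_A)$ is an isomorphism (reflector identity), the elementary fact forces each $\Delta_i(\eta_A)$ to be an isomorphism, naturally in $A$. Hence every object in the image of $\Delta_i$ is isomorphic to one of the form $F_i(\Delta A)\in\sC_i'$, so $\sC_i\subseteq\sC_i'$ and $\sC_i=\sC_i'$. In particular $\Delta_i$ is valued in $\sC_i\subseteq\sC$ (the first assertion), and $\sC$ is equivalent to the Cartesian product of the categories $\sC_i$ (the second).

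Finally, for the reflector statement I would transport things through the equivalence $\sC\cong\mathop{\text{\huge $\times$}}\nolimits_{i\in I}\sC_i$: there the full inclusion of $\sC_i$ as the tuples concentrated in the $i$\+th slot has left adjoint the $i$\+th projection, and the endofunctor $F_i$ of $\sC$ corresponds to ``project onto $\sC_i$, then include back'' (for $C\in\sC$ one has $F_j(F_iC)=0$ for $j\ne i$ and $F_i(F_iC)\cong F_iC$). So $F_i$, viewed as a functor $\sC\rarrow\sC_i$, is that projection and is therefore left adjoint to the inclusion $\sC_i\hookrightarrow\sC$. Composing with the reflector $\Delta\:\sA\rarrow\sC$ (left adjoint to $\sC\hookrightarrow\sA$) and using the natural isomorphism $\Delta_i\cong F_i\circ\Delta$ from the previous step, $\Delta_i\:\sA\rarrow\sC_i$ is left adjoint to $\sC_i\hookrightarrow\sA$, i.e.\ it is the reflector onto $\sC_i$.

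The only genuine work is in the middle step: proving the ``product of isomorphisms'' fact with nothing but a zero object available (no additivity), and then verifying that the isomorphism $\Delta_i\cong F_i\circ\Delta$ is \emph{natural} in $A$, so that the composite-of-left-adjoints argument of the last step is legitimate; everything else is formal manipulation with reflective subcategories.
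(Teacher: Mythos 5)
Your proof is correct and follows essentially the same route as the paper, whose entire argument is ``set $F_i=\Delta_i|_\sC$ and apply the preceding lemma.'' You merely make explicit the details the paper leaves implicit --- that $\sC$ inherits products and a zero object, that the essential image of $\Delta_i$ on $\sA$ agrees with that of $F_i=\Delta_i|_\sC$ (via $\Delta_i(\eta_A)$ being an isomorphism, using the product-of-isomorphisms fact), and the composite-of-left-adjoints argument for the reflector claim --- all of which check out.
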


\begin{proof}
 Set $F_i=\Delta_i|_\sC$ and apply the previous lemma.
\end{proof}

 The following theorem is the main result of this section.

\begin{thm}
 For any subset\/ $\infty\in\boX\subset\boP^1(k)$,
the following assertions hold. \par
\textup{(a)} Any\/ $\boX$\+comodule $M$ has a unique, functorial
decomposition into a direct sum of\/ $\{\lambda\}$\+comodules $M_\lambda$
over\/ $\lambda\in\boX$, and any such direct sum
$M=\bigoplus_{\lambda\in\boX}M_\lambda$ of\/ $\{\lambda\}$\+comodules
$M_\lambda$ is an\/ $\boX$\+comodule.
 The category of\/ $\boX$\+comodules is thus equivalent to the Cartesian
product of the categories of\/ $\{\lambda\}$\+comodules over
$\lambda\in\boX$ (each of which is equivalent to the category of
$k$\+vector spaces with a locally nilpotent linear operator~$z$). \par
\textup{(b)} Any\/ $\boX$\+contramodule $C$ has a unique, functorial
decomposition into a direct product of\/ $\{\lambda\}$\+contramodules
$C^\lambda$ over $\lambda\in\boX$, and any such direct product
$C=\prod_{\lambda\in\boX}C^\lambda$ of\/
$\{\lambda\}$\+contramodules $C^\lambda$ is an\/ $\boX$\+contramodule.
 The category of\/ $\boX$\+contramodules is thus equivalent to the
Cartesian product of the categories of\/ $\{\lambda\}$\+contramodules
over $\lambda\in\boX$ (each of which is equivalent to the category of
$k$\+vector spaces with a $z$\+power infinite summation operation).
\end{thm}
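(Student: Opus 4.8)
The plan is to reduce the general case $\boX\ni\infty$, possibly containing $0$, to the already-established case $0\notin\boX$ (Proposition~\ref{outside-of-zero-prop} together with Lemma~\ref{polynomials-co-contra}) plus the purely category-theoretic decomposition statements (Lemma~\ref{reflector-decomposed} and the lemma preceding it). The key computation is that of the $R$\+$R$\+bimodule $K=U_\boX/R$, since the coreflector $\Gamma_{u_\boX}=\Tor^R_1(K,{-})$ and the reflector $\Delta_{u_\boX}=\Ext^1_R(K,{-})$ onto the categories of $\boX$\+comodules and $\boX$\+contramodules (by Propositions~\ref{u-comodule-category}(b), \ref{u-contramodule-category}(b), valid since $\pd{}_RU_\boX=\fd U_{\boX}{}_R=1$) govern those categories completely.

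First I would compute $U_\boX$ and $K=U_\boX/R$ explicitly as Kronecker quiver representations (equivalently, as $R$\+$R$\+bimodules in matrix form). The ring $U_\boX=\left(\begin{smallmatrix} S_\boX & S_\boX \\ S_\boX & S_\boX \end{smallmatrix}\right)$ where $S_\boX=\boX^{-1}k[x]$, and since $\boX^{-1}k[x]$ as a $k[x]$\+module decomposes — after a change of coordinate at each point — in a way compatible with the eigenvalue decomposition, the quotient $K$ splits as a direct sum $K\cong\bigoplus_{\lambda\in\boX}K_\lambda$ of $R$\+$R$\+bimodules, where $K_\lambda$ is supported ``at the point $\lambda$''. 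Concretely, $K_\infty$ is computed from $k[x]/R$ and $K_\lambda$ for $\lambda\in k\cap\boX$ from $k[x]_{(x-\lambda)}/k[x]$ localized appropriately; each $K_\lambda$ is, as a left and right $R$\+module, a module supported at a single point of $\boP^1(k)$. The point of invertibility (the ``$g$ invertible'' versus ``$f$ invertible'' dichotomy of Section~\ref{kronecker-quiver-secn}) is that the summand $K_0$ is the only one behaving differently from the rest, and the argument of Proposition~\ref{outside-of-zero-prop} handles all $\lambda\ne 0$ uniformly after a coordinate change.

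Next, from $K\cong\bigoplus_{\lambda\in\boX}K_\lambda$ I would deduce the functor decompositions $\Gamma_{u_\boX}=\Tor^R_1(K,{-})\cong\bigoplus_{\lambda}\Tor^R_1(K_\lambda,{-})$ and $\Delta_{u_\boX}=\Ext^1_R(K,{-})\cong\prod_{\lambda}\Ext^1_R(K_\lambda,{-})$, using that $\Ext^1$ turns the direct sum in the first variable into a product and that each $K_\lambda$ has a finite free resolution over $R$ so that $\Ext^1_R(K,{-})$ commutes with the product (or, more simply, that the index set is controlled and each $\Ext^1_R(K_\lambda,{-})$ already lands in the $\{\lambda\}$\+contramodules). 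Defining $\{\lambda\}$\+comodules and $\{\lambda\}$\+contramodules as the essential images of $\Tor^R_1(K_\lambda,{-})$ and $\Ext^1_R(K_\lambda,{-})$ respectively — equivalently, as the $u_{\{\lambda\}}$\+co/contramodules after the appropriate factorization $R\to U_{\{\lambda\}}$ — Lemma~\ref{reflector-decomposed} (and its companion) then immediately yields that the category of $\boX$\+comodules is the Cartesian product of the categories of $\{\lambda\}$\+comodules, and dually for contramodules. Finally, the identification of each factor with vector spaces equipped with a locally nilpotent operator (resp.\ a $z$\+power infinite summation operation) is exactly Lemma~\ref{polynomials-co-contra} combined with Proposition~\ref{outside-of-zero-prop} applied to the one-point set, after translating the point $\lambda$ to $0$ or $\infty$ by a change of coordinate on $\boP^1(k)$; the case $\lambda=\infty$ is covered by Proposition~\ref{outside-of-zero-prop} with $\boX=\{\infty\}$, and the case $\lambda\in k$ by the same proposition applied to $\boX=\{\infty\}$ in the coordinate $y=1/(x-\lambda)$.

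**The main obstacle** I expect is making the decomposition $K\cong\bigoplus_{\lambda\in\boX}K_\lambda$ precise and verifying it respects \emph{both} the left and the right $R$\+module structures simultaneously — that is, that it is a direct sum of $R$\+$R$\+bimodules — and then checking that $\Ext^1_R(K,{-})$ genuinely converts this (possibly infinite) direct sum into a product of functors on the nose, not merely up to some completion issue. The subtlety is that $U_\boX$ itself need not decompose as a ring, only $K=U_\boX/R$ as a bimodule; one must track carefully how the localization $S_\boX$ sits inside the matrix ring and how the quotient by the diagonal copy of $R$ interacts with the eigenvalue (point-of-$\boP^1$) grading. Once the bimodule decomposition and the behavior of $\Tor$ and $\Ext$ under it are pinned down, the rest is a formal application of the category-theoretic lemmas and the already-proven one-point case.
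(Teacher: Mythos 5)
Your proposal follows essentially the same route as the paper's proof: compute $K=U_\boX/R$ as a representation of the square of the Kronecker quiver, split it as an $\boX$\+indexed direct sum of bimodules coming from the decomposition $S_\boX\cong k[x]\oplus\bigoplus_{\lambda\in\boX\setminus\{\infty\}}\bigoplus_{n\ge1}k(x-\lambda)^{-n}$, deduce the direct sum/product decompositions of $\Gamma_{u_\boX}=\Tor_1^R(K,{-})$ and $\Delta_{u_\boX}=\Ext^1_R(K,{-})$, and conclude via Lemma~\ref{reflector-decomposed} (and its dual) together with Proposition~\ref{outside-of-zero-prop} after a coordinate change; the ``obstacle'' you flag about $\Ext^1$ turning the bimodule direct sum into a product is harmless, since $\Ext^1_R(\bigoplus_\lambda K_\lambda,{-})\cong\prod_\lambda\Ext^1_R(K_\lambda,{-})$ holds automatically.
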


\begin{proof}
 The $R$\+$R$\+bimodule $U_\boX$ can be described as the following
representation of the Cartesian square $(\bullet\rightrightarrows\bullet)
\times(\bullet\rightrightarrows\bullet)$ of the Kronecker quiver
$\bullet\rightrightarrows\bullet$
(we recall the notation $S_\boX=\boX^{-1}k[x]$ for the relevant
localization of the polynomial ring $k[x]$):
\begin{equation}
\begin{tikzcd}
 S_\boX \arrow[rr, bend left=10, "1"] \arrow[rr, bend right=10, "x"'] &&
 S_\boX \\ \\
 S_\boX \arrow[rr, bend left=10, "1"] \arrow[rr, bend right=10, "x"']
 \arrow[uu, bend left=10, "1"] \arrow[uu, bend right=10, "x"'] &&
 S_\boX \arrow[uu, bend left=10, "1"] \arrow[uu, bend right=10, "x"']
\end{tikzcd}
\end{equation}

In the same vein, the $R$\+$R$\+bimodule $K=U_\boX/R$ is described as
the following representation of the quiver $(\bullet\rightrightarrows
\bullet)\times(\bullet\rightrightarrows\bullet)$:
\begin{equation} \label{U/R-representation}
\begin{tikzcd}
 S_\boX/k
 \arrow[rr, bend left=10, "1"] \arrow[rr, bend right=10, "x"'] &&
 {S_\boX/(k\oplus kx)\mkern-24mu} \\ \\
 S_\boX \arrow[rr, bend left=10, "1"] \arrow[rr, bend right=10, "x"']
 \arrow[uu, bend left=15, "1"] \arrow[uu, bend right=15, "x"'] &&
 S_\boX/k \arrow[uu, bend left=15, "1"] \arrow[uu, bend right=15, "x"']
\end{tikzcd}
\mkern24mu
\end{equation}
 
 The key observation is that
the representation~\eqref{U/R-representation} decomposes into a direct
sum of representations indexed by the points of the set~$\boX$.
 This direct sum decomposition of~\eqref{U/R-representation} is induced
by the direct sum decomposition
$$
 S_\boX\,\cong\, k[x]\,\oplus\,\bigoplus\nolimits
 _{\lambda\in\boX\setminus\{\infty\}}
 \left(\bigoplus\nolimits_{n\ge1}k(x-\lambda)^{-n}\right)
$$
of the vector space~$S_\boX$.
 Hence we obtain an $\boX$\+indexed direct sum decomposition of
the functor $\Gamma_{u_\boX}=\Tor_1^R(K,{-})$ and an $\boX$\+indexed
direct product decomposition of the functor
$\Delta_{u_\boX}=\Ext^1_R(K,{-})$.
 It remains to apply Proposition~\ref{u-comodule-category}(b) together
with the dual assertion to Lemma~\ref{reflector-decomposed} in order
to deduce part~(a) of the theorem, and
Proposition~\ref{u-contramodule-category}(b) together with
Lemma~\ref{reflector-decomposed} in order to deduce part~(b).

 The description of the categories of $\{\lambda\}$\+comodules and
$\{\lambda\}$\+contramodules in parts~(a) and~(b) is provided by
Proposition~\ref{outside-of-zero-prop}.
 It suffices to change the coordinate on the projective line
$\boP^1(k)$ suitably in order to include the case $\lambda=0$.
\end{proof}

\bigskip

\end{document}